\documentclass[11pt]{article}
\usepackage{latexsym,amsfonts,amssymb,amsmath,amsthm}
\usepackage{graphicx}

\usepackage[usenames,dvipsnames]{color}
\usepackage{ulem,comment}

\usepackage{color}
\title{Propagation dynamics of a parabolic-elliptic chemotaxis model in a time-periodic shifting environment}

\author{Shuwen Xue\thanks{Shuwen Xue (corresponding author, E-mail: sxue@niu.edu)}\\
School of Mathematical and Statistical Sciences\\
Northern Illinois University\\
DeKalb, IL 60115, USA\\
and\\ 
Xiao-Qiang Zhao\thanks{Xiao-Qiang Zhao (E-mail: zhao@mun.ca)'s research is partially supported by the NSERC of Canada (RGPIN-2019-05648 and RGPIN-2025-04963). }\\
Department of Mathematics and Statistics\\
Memorial University of Newfoundland\\
St. John's, NL A1C 5S7, Canada.\\
}

\date{}

\begin{document}

\maketitle

\newtheorem{tm}{Theorem}[section]
\newtheorem{prop}{Proposition}[section]
\newtheorem{defin}{Definition}[section] % definition numbers are dependent on theorem numbers
\newtheorem{coro}{Corollary}[section]
\newtheorem{lem}{Lemma}[section]
\newtheorem{assumption}{Assumption}[section]
\newtheorem{rk}{Remark}[section]
\newtheorem{nota}[tm]{Notation}
\numberwithin{equation}{section}

\newcommand{\stk}[2]{\stackrel{#1}{#2}}
\newcommand{\dwn}[1]{{\scriptstyle #1}\downarrow}
\newcommand{\upa}[1]{{\scriptstyle #1}\uparrow}
\newcommand{\nea}[1]{{\scriptstyle #1}\nearrow}
\newcommand{\sea}[1]{\searrow {\scriptstyle #1}}
\newcommand{\csti}[3]{(#1+1) (#2)^{1/ (#1+1)} (#1)^{- #1
 / (#1+1)} (#3)^{ #1 / (#1 +1)}}
\newcommand{\RR}[1]{\mathbb{#1}}

\newcommand{\rd}{{\mathbb R^d}}
\newcommand{\ep}{\varepsilon}
\newcommand{\rr}{{\mathbb R}}
\newcommand{\alert}[1]{\fbox{#1}}
\newcommand{\eqd}{\sim}
\def\p{\partial}
\def\R{{\mathbb R}}
\def\N{{\mathbb N}}
\def\Q{{\mathbb Q}}
\def\C{{\mathbb C}}
\def\l{{\langle}}
\def\r{\rangle}
\def\t{\tau}
\def\k{\kappa}
\def\a{\alpha}
\def\la{\lambda}
\def\De{\Delta}
\def\de{\delta}
\def\ga{\gamma}
\def\Ga{\Gamma}
\def\ep{\varepsilon}
\def\eps{\varepsilon}
\def\si{\sigma}
\def\Re {{\rm Re}\,}
\def\Im {{\rm Im}\,}
\def\E{{\mathbb E}}
\def\P{{\mathbb P}}
\def\Z{{\mathbb Z}}
\def\D{{\mathbb D}}
\newcommand{\ceil}[1]{\lceil{#1}\rceil}
\def\mymathhyphen{{\hbox{-}}}

\begin{abstract}
The current paper is devoted to the study of the propagation dynamics of a parabolic-elliptic chemotaxis model in a shifting environment. We investigate the scenario in which the favorable and unfavorable habitats are spatially separated, with the favorable habitat shifting at a time-periodic speed. We first establish the vanishing, persistence, and spreading properties of solutions with compactly supported and front-like initial data. In particular, we show that if the self-limitation rate of the species is large relative to the chemotactic sensitivity and the species is initially confined to a bounded region, then the species eventually becomes extinct when the habitat shifts rightward with a large average speed. In contrast, when the average shifting speed of the habitat is moderate, the species can persist and spread along the shifting habitat into an increasingly large spatial region. Finally, we establish the existence and nonexistence of time-periodic forced wave solutions whose wave speed coincides with the average shifting speed of the habitat.
\end{abstract}

\medskip
\noindent{\bf Keywords.} Parabolic-elliptic chemotaxis model; time-periodic shifting environment; persistence; spreading speed; forced traveling wave.

%\medskip
%\noindent {\bf 2020 Mathematics Subject Classification.} 35B35, 35B40, 35K57, 35Q92, 92C17. 	{\color{red} revise?}

\medskip
\noindent {\bf 2020 Mathematics Subject Classification.} 35B40, 35K57, 35C07, 92C17.

\section{Introduction}

This work is concerned with the propagation dynamics of the following parabolic-elliptic chemotaxis model in a in a shifting environment with time-periodic shifting speed:
\begin{equation}\label{Keller-Segel-eq0}
\begin{cases}
u_t=u_{xx}-\chi( u v_{x})_{x}+u(r(t, x-s(t))-bu),\quad x\in\R\cr
0 =v_{xx}-  \lambda v +\mu u,\quad x\in\R,
\end{cases}
\end{equation}
where $ b, \lambda,\mu$, $\chi$ are positive constants, $s(t)$ is a
continuously differentiable  shifting function whose derivative $s'(t)$ is continuous and $\omega\hbox{-}{\rm periodic }$ in $t$. The functions $u(t,x)$ and $v(t,x)$ represent the densities of a biological species and a chemical substance, respectively, at time $t$ and location $x$.  The positive constant $\chi$ measures the chemotactic sensitivity of the species to the chemical substance produced by the species itself; the reaction term $u(r(t, x-s(t))-bu)$ in the first  equation of system \eqref{Keller-Segel-eq0} describes the local population dynamics of the species;
 %which depends on the density $u$, time $t$ and the shifting habitat with a time-periodic shifting speed; 
 $\lambda$  represents the degradation rate of the  chemical substance;  and $\mu$ is the rate at which the species produces the chemical substance.

System \eqref{Keller-Segel-eq0} describes the evolution of a biological species subject to chemotactic effects in a shifting environment whose shifting speed varies periodically in time. The directed movement of biological species or cells in response to gradients of a chemical substance is referred to as chemotaxis. Chemotaxis is observed in a wide variety of biological phenomena, ranging from microscopic cellular processes to macroscopic population-level behaviors, including cancer growth, embryonic development, immune responses, wound healing, neuronal migration, the search for food sources, predator avoidance, mate attraction, and population dynamics, among others (see \cite{Ei}). Because of its important role in these processes, chemotaxis has attracted considerable attention from biologists, ecologists, and mathematicians. Mathematical modeling of chemotaxis dates back to the pioneering work of Keller and Segel in the early 1970s on the aggregation of the slime mold \textit{Dictyostelium discoideum} (see \cite{KeSe1, KeSe2}). Since then, a wide variety of mathematical models have been proposed to describe chemotactic processes. Such models are commonly referred to as chemotaxis models or Keller-Segel models, and their mathematical analysis have been extensively studied. We refer the reader to \cite{BBTW, CaLiWa, HiPa, Hor, IssaShen20171, IssaShen20172, KurtShenXue, KJPainter} and the references therein for the mathematical study of chemotaxis models and their applications, and to the survey \cite{ArTy} for more recent developments.

Different properties of the function $r(\cdot, \cdot)$  correspond to different environmental scenarios. Throughout the whole paper, we always assume that

\medskip

\noindent {\bf (H1)} {\it $r(t, x)$ is globally H\"older continuous on $\R\times\R$ and $\omega$-periodic in $t$  for some $\omega>0$. Moreover, the limits $r(t, \pm\infty):=\lim_{x\to \pm\infty}r(t, x)$ exist uniformly for $t\in [0, \omega]$ and are finite. For each $t\in [0,\omega]$, $r(t, x)$ is nondecreasing in $x$ and $r(t, -\infty)<0<r(t, \infty)$.}

\medskip

Assumption {\bf (H1)} implies that, at any given time $t$, the living environment is divided into exactly two regions: a favorable region suitable for species growth (i.e.,  $\{x\in\R: r(t,x)>0\}$) and an unfavorable region unsuitable for species growth (i.e., $\{x\in\R: r(t,x)<0\}$). In particular, the environment is favorable near $\infty$ and unfavorable near $-\infty$. Moreover, the environment shifts at a time-periodic speed. Such problems may arise from the biological question of whether a species can survive environmental shifts caused by climate change associated with global warming or industrialization (see \cite{BDNZ, LBSF, APMAL}). They may also arise from the epidemiological question of whether the spread of a pathogen can keep pace with the movement of its host (see \cite{FaLoWu}).

Seasonal variation may cause the shifting speed of a favorable habitat to fluctuate periodically, thereby forcing a species to shift or expand its range in order to persist. For example, climate-induced periodic northward migration of the silk moth has been observed \cite{KhImYa}. Seasonal variation may also significantly affect the growth rate of a species. For instance, silk moth eggs hatch at a very low rate during winter because of low temperatures, whereas their hatching rates are much higher during spring and summer. In addition, species movement may be influenced by chemical substances produced by the species themselves. During the mating season, for example, female moths release pheromones whose scent attracts males and induces them to move toward regions of higher pheromone concentration. Motivated by these biological mechanisms, we consider a chemotaxis model in a time-periodically shifting environment.

It is of both biological and mathematical interest to understand how environmental heterogeneity and temporal variation affect the propagation dynamics of a species, including whether the species can persist, whether it can spread into increasingly large spatial regions and, if so, how fast it spreads, and whether it can propagate as a wave.

These questions have been extensively investigated for system  \eqref{Keller-Segel-eq0} in the absence of chemotaxis (i.e.,  $\chi=0$) and with a space-time homogeneous growth rate (i.e., $r(t, x-s(t))\equiv r>0$), in which case the system reduces to,
\begin{equation}\label{F-KPP}
u_{t}=u_{xx}+u(r-bu), \quad x\in\R.
\end{equation}
Equation \eqref{F-KPP} is referred to as  the {\it  Fisher-KPP  equation}, owing to the pioneering works of Fisher \cite{Fisher} and Kolmogorov, Petrowsky, and Piskunov \cite{KPP} on its traveling wave solutions and spreading properties.
The following results concerning the traveling wave solutions and spreading speed of \eqref{F-KPP} are well known. Equation \eqref{F-KPP} admits traveling wave solutions $u(t,x)=\phi(x-ct)$
connecting $\frac{r}{b}$ and $0$ (i.e., $\phi(-\infty)=\frac{r}{b}$ and $\phi(\infty)=0$) for every $c\geq 2\sqrt r$ and has no such traveling wave solutions of slower speeds (see \cite{Fisher, KPP}). Moreover, for any nontrivial nonnegative solution $u(t,x)$ of \eqref{F-KPP} whose initial function $u(0, x)$ has compact support,  one has
$$
\lim_{t\to \infty}\sup_{|x| \ge ct}u(t,x)=0 \quad \forall \,\, c>2\sqrt r
$$
and
$$
\lim_{t\to \infty}\sup_{ |x| \le ct}|u(t,x)-\frac{r}{b}|=0\quad \forall\,\, 0<c<2\sqrt r
$$
(see \cite{ArWe1, ArWe2}). In literature, $2\sqrt r$ is called the {\it asymptotic speed of spread}, or simply the {\it spreading speed}, of \eqref{F-KPP}, which was first introduced by Aronson and Weinberger \cite{ArWe1}.  Since their pioneering works, a huge amount of research has been carried out toward the front propagation dynamics of different types of Fisher–KPP equations,  see, e.g.,  \cite{LiYiZh, LiZh, LiZh1, Nad, NoRuXi} and references therein for time and/or space periodic Fisher–KPP type equations, and  \cite{BeHa, HuSh, NadRoss, Shen1, Shen2} and references therein for general time and/or space dependent Fisher-KPP type equations.

Observe that in the absence of chemotaxis (i.e.,  $\chi=0$), the first equation in \eqref{Keller-Segel-eq0} is decoupled from the second one, and system \eqref{Keller-Segel-eq0} reduces to a Fisher-KPP equation in a shifting environment. In recent years, considerable attention has been devoted to understanding the effects of climate change on species persistence and spreading, and various patterns of environmental change have been proposed. A typical scenario is that the favorable environment shifts at a constant speed. Mathematical models describing such environments were introduced and studied in \cite{BDNZ, APMAL}.  More precisely,  Berestycki et al. \cite{BDNZ} considered a Fisher-KPP type equation of the form
\begin{equation}\label{B-fisher-kpp1}
u_t= Du_{xx}+f(u, x-ct), \quad x\in\R.
\end{equation}
where $c\geq 0$ is the shifting speed of the environment, and $f$ is the growth rate function taking, for example, the form
$$
f(u, x)=\begin{cases}
au(1-\frac{u}{K}),\quad &0\leq x \leq L,\\
-ru,\quad &x<0 \,\ {\rm and }\,\ x>L
\end{cases}
$$
for some $L >0$ standing for the size of the climate envelope within which species can grow. They showed that solutions vanish as time elapses when the generalized principal eigenvalue of the operator $u\to Du_{xx} + cu_{x} + f_{u}(x,0)u$, is non-positive. When the generalized principal eigenvalue is positive, a unique travelling wave with speed $c$
exists and attracts solutions. They also studied the dependence of the generalized principal eigenvalue on the shifting speed $c$ and the size of the climate envelope $L$.
It should be pointed out that the paper \cite{APMAL} addresses the same question as in \cite{BDNZ}, but focuses on the effect of a moving climate on the outcome of competitive interactions between two species. 

When $f$ in \eqref{B-fisher-kpp1} takes the special form $f(u, x-ct)=u(r(x-ct)-u)$, Li et al. \cite{LBSF} investigated the spreading dynamics of solutions of \eqref{B-fisher-kpp1} in a shifting environment with $r(-\infty)<0<r(\infty)$ and $c\geq 0$. More precisely, they showed that if $c>2\sqrt{r(\infty)}$, then the species eventually becomes extinct, whereas if $0<c<2\sqrt{r(\infty)}$, then the species can persist and spread along the shifting habitat. Later, the existence of a nondecreasing traveling wave solution connecting $0$ and $r(\infty)$, whose wave speed coincides with the habitat shifting speed $c$, was established in \cite{HHXZ}. Such a wave is referred to as a {\it forced wave solution}. Moreover, It was shown that the forced wave solution is unique and globally exponential stable (see \cite[Theorem 2.3]{WaZh}). 
When $f(u, x)$ in \eqref{B-fisher-kpp1} is asymptotically of KPP type as $x\to-\infty$, Berestycki and Fang \cite{BeFa} established the existence and multiplicity of forced wave solutions, as well as their attractivity.

The works \cite{BDNZ, LBSF, APMAL} have been extended in several aspects (see \cite{AlBeRa, BhRl1, BhRl2, BoGi, BoNa, HuYiZo, JiangZhao, Vo, ZhangZhao} and reference therein). In a time-periodic shifting environment, Fang, Peng and Zhao \cite{FaPeZh} investigated the propagation dynamics of solutions to a nonautonomous reaction-diffusion equation and established the existence, uniqueness, and stability of periodic forced waves. In an almost-periodic shifting environment, the authors of \cite{ShShXuZh} studied the propagation dynamics of a reaction-diffusion equation. More precisely, they established a persistence criterion in terms of the sign of the approximate top Lyapunov exponent and proved the existence of a unique forced wave solution. They also investigated the effects of fluctuations in the shifting speed on species evolution and showed that such fluctuations have an adverse effect on species persistence. Analogous results have been obtained for integro-difference equations (see \cite{BoLe, HaZhHiKo, LeMaSh, LiWu, PhKo, ZhKo1, ZhKo2} and the references therein), nonlocal dispersal equations (see \cite{LeShZh, LiWaZh, QiLiWa, WaZh, ZhZh} and the references therein), and lattice differential equations (see \cite{HuLi, PaWu} and the references therein). Related studies have also been carried out for competition models in shifting environments (see \cite{DoFaLiLi, YuWaZo, ZhWaYa, WuWaZo, ZhWaDo} and the references therein) and free boundary problems in shifting environments (see \cite{DuWeZh, HuHaSoDu, LeDu, LeiNiDoDu} and the references therein). It is also worth mentioning that, in an abstract framework, Yi and Zhao \cite{YiZh2020, YiZh2026} established the existence of spatially inhomogeneous steady states and asymptotic propagation properties for a broad class of monotone evolution systems with/without spatial translation invariance. Their general theory can be applied to study the propagation dynamics of autonomous and periodic cooperative systems, with or without time delays, in shifting environments.

There are also quite a few works on  the spreading properties of system
\eqref{Keller-Segel-eq0} in the presence of chemotaxis (i.e.,  $\chi \neq 0$) and space-time homogeneous growth rate (i.e. $r(t, x-s(t))\equiv r>0$) (see \cite{SaSh1, SaSh2, SaShXu}). In particular, it was proved in \cite{SaShXu} that,  if $b>\chi\mu$ and $b\geq \big(1+\frac{1}{2}\frac{(\sqrt{r}-\sqrt{\lambda})_+}{(\sqrt{r}+\sqrt{\la})}\big)\chi\mu$ where $(\sqrt{r}-\sqrt{\lambda})_+$ denotes the positive part of $\sqrt{r}-\sqrt{\lambda}$,
then
$2\sqrt r$ is the spreading speed for solutions of system \eqref{Keller-Segel-eq0} with nonnegative continuous initial data $u_0$ of nonempty compact support in the sense that 
 $$
 \lim_{t\to\infty}\sup_{|x|\ge ct}u(t, x;u_0)=0\quad \forall\,\, c>2\sqrt{r}
 $$
 and
 $$
 \lim_{t\to\infty}\inf_{|x|\le ct} u(t, x;u_0)>0\quad \forall \,\, 0<c<2\sqrt{r},
 $$
 where $(u(t, x;u_0),v(t, x;u_0))$ is the unique global classical solution of system \eqref{Keller-Segel-eq0} with $u(0, x;u_0)=u_0(x)$.
 It was also proved that  if  $b>2\chi\mu$  and $\lambda \geq r$, then  $2\sqrt r$ is the minimal speed  of the  traveling wave solutions of system \eqref{Keller-Segel-eq0}  connecting $(0,0)$ and $(\frac{r}{b},\frac{\mu}{\lambda}\frac{r}{b})$. More precisely,
 for every $c\ge 2\sqrt r$, \eqref{Keller-Segel-eq0} has a  traveling wave solution  connecting these two states, whereas no such traveling wave solution exists with speed $c<2\sqrt r$. In particular, if $\lambda \geq r$ and $b>\chi\mu$, or $\lambda<r$ and  $b\geq \big(1+\frac{1}{2}\frac{(\sqrt{r}-\sqrt{\lambda})}{(\sqrt{r}+\sqrt{\la})}\big)\chi\mu$, then the chemotaxis neither speeds up nor slows down the spatial spreading of the corresponding Fisher-KPP equation \eqref{F-KPP}. The authors of \cite{SaSh3, SaSh4, SaSh5} studied the asymptotic dynamics of \eqref{Keller-Segel-eq0} with space-time dependent logistic source. In particular, in the case of space homogeneous and time-periodic logistic source (i.e. $r(t, x-s(t))\equiv r(t)$), it was proved in \cite{SaSh4} that,  if $b>2\chi\mu$, there exists a unique spatially homogeneous strictly positive entire solution that is uniformly and exponentially stable with respect to strictly positive perturbations.

% In the rest of this introduction, we introduce some standing notations and state the main results on the persistence and  spreading speeds  of \eqref{Keller-Segel-eq0}.

The study of chemotaxis models in shifting environments is relatively recent. The authors of \cite{ShXu1, ShXu2} investigated system \eqref{Keller-Segel-eq0} with $r(t, x-s(t))$ replaced by $r(x-ct)$. They assumed that the limits $r(\pm \infty):=\lim_{x\to \pm\infty}r(x)$ exist and that either $r(-\infty)<0<r(\infty)$ or $r(\pm \infty)<0$. Let $r^{*}=\sup_{x\in\R} r(x)$. In the case $r(-\infty)<0<r(\infty)$, they showed that if the shifting speed $c>c^{*}:=2\sqrt{r^{*}}$, then the species eventually becomes extinct. If $-c^{*}\leq c<c^{*}$, then the species persists and spreads along the shifting habitat. If $c<-c^{*}$, then the species will spread towards both directions. They also established the existence of a forced wave solution of system \eqref{Keller-Segel-eq0}. In the case $r(\pm\infty)<0$, let $\lambda_{\infty}$ denote the generalized principle eigenvalue of the operator $u\to u_{xx}+cu_{x}+r(x)u$. It was shown that if $\lambda_{\infty}<0$ and the degradation rate of the chemoattractant is large, then the species eventually becomes extinct. If $\lambda_{\infty}>0$, then the species persists in the vicinity of the favorable habitat. In the persistence case, it was further proved that, provided $\chi$ is sufficiently small, system \eqref{Keller-Segel-eq0} admits a forced wave solution connecting $(0,0)$ to $(0,0)$.
%When the species persists, it is proved that system \eqref{Keller-Segel-eq0} has a forced wave solution connecting $(0, 0)$ and $(0, 0)$ provided that $\chi$ is sufficiently small.

The objective of the present paper is to investigate the persistence and spreading speeds of system \eqref{Keller-Segel-eq0}, as well as the existence and nonexistence of forced wave solutions, in the presence of chemotaxis and a shifting environment with time-periodic shifting speed. 
To state our main results, we first introduce some notations and definitions. Let
$$
C^b_{\rm unif}(\R)=\{ u\in C(\R) : u\ \text{is uniformly continuous and bounded on $\R$}\}.
$$
For every $u\in C^b_{\rm unif}(\R)$,  define $\|u\|_{\infty}:=\sup_{x\in\R}|u(x)|$. For each nonnegative initial function $u_0\in C^b_{\rm unif}(\R)$, we denote by $(u(t,x;u_0),v(t,x;u_0))$ the classical solution of system \eqref{Keller-Segel-eq0} satisfying $u(0,x;u_0)=u_0(x)$ for every $x\in\R$. Comparison principle implies that $u(t,x;u_0)\geq 0$ and $v(t,x;u_0)\geq 0$ whenever the solution is defined.Throughout this paper, we are interested in nonnegative classical solutions of \eqref{Keller-Segel-eq0}  since both functions $u(t,x)$ and $v(t,x)$ represent density functions.

By arguments similar to those used in (\cite[Theorem 1.1 and Theorem 1.5]{SaSh1}), one can establish the following existence and uniqueness result for system \eqref{Keller-Segel-eq0} with nonnegative initial data.

\begin{prop}\label{existence-uniqueness}
For every nonnegative initial function $u_0\in C^b_{\rm unif}(\R)$, there is a unique maximal time $T_{max}$ such that $(u(t,x;u_0),v(t,x;u_0))$ is defined for every $x\in\R$ and $0\le t<T_{\max}$. Moreover, if $\chi\mu<b$, then $T_{max}=\infty$ and the solution is globally bounded.
\end{prop}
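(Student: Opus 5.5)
The plan is to follow \cite{SaSh1}: construct a local mild solution by a fixed point argument, extend it to a maximal interval, and use the structure of the elliptic equation together with the logistic term to get an a priori $L^\infty$ bound when $\chi\mu<b$.

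\emph{Local existence.} For $u\in C^b_{\rm unif}(\R)$ the elliptic equation is solved explicitly by
$$
v=\mu(\lambda-\partial_{xx})^{-1}u=\frac{\mu}{2\sqrt\lambda}\int_{\R}e^{-\sqrt\lambda|x-y|}u(y)\,dy .
$$
This gives $\|v\|_\infty\le\frac{\mu}{\lambda}\|u\|_\infty$ and $\|v_x\|_\infty\le\frac{\mu}{\sqrt\lambda}\|u\|_\infty$. Hence $u\mapsto v_x$ is bounded and Lipschitz from $C^b_{\rm unif}(\R)$ to itself. We then write the first equation in mild form with the heat semigroup $T(t)=e^{t(\partial_{xx}-1)}$:
$$
u(t)=T(t)u_0-\chi\int_0^t T(t-s)\partial_x\big(u v_x\big)(s)\,ds+\int_0^t T(t-s)\,u\big(1+r(s,\cdot-s(s))-bu\big)(s)\,ds .
$$
We use the smoothing estimate $\|T(t)\partial_x w\|_\infty\le Ct^{-1/2}e^{-t}\|w\|_\infty$ and the boundedness and Hölder continuity of $r$ from (H1). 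A contraction argument in $C([0,T],C^b_{\rm unif}(\R))$ on a ball of radius $2\|u_0\|_\infty$, with $T$ small depending only on $\|u_0\|_\infty$, then gives a unique mild solution. Standard parabolic regularity (Hölder bootstrapping, using that $r$ is Hölder and $s\in C^1$) upgrades it to a classical solution. Nonnegativity follows from the comparison principle applied to the linear equation
$$
u_t=u_{xx}-\chi v_x u_x+u\big(r-\chi v_{xx}-bu\big),
$$
viewed with given bounded coefficients. Uniqueness of classical solutions follows from the same Lipschitz estimates and Gronwall. Since the existence time depends only on $\|u_0\|_\infty$, the solution extends to a maximal $T_{\max}$, and if $T_{\max}<\infty$ then $\limsup_{t\to T_{\max}}\|u(t)\|_\infty=\infty$.

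\emph{Global existence and boundedness.} Use $v_{xx}=\lambda v-\mu u$ to expand the chemotactic term:
$$
u_t=u_{xx}-\chi v_xu_x+u\big(r(t,x-s(t))-\chi\lambda v-(b-\chi\mu)u\big).
$$
Since $v\ge0$ and $r\le r_{\sup}:=\sup r<\infty$ by (H1), $u$ is a subsolution of
$$
w_t=w_{xx}-\chi v_xw_x+w\big(r_{\sup}-(b-\chi\mu)w\big),
$$
whose drift coefficient $v_x$ is bounded on each $[0,T]$ with $T<T_{\max}$. The constant function $M=\max\{\|u_0\|_\infty,\ r_{\sup}/(b-\chi\mu)\}$ is a supersolution of this equation precisely because $b-\chi\mu>0$. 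Comparison then yields $u(t,x)\le M$ on $[0,T_{\max})$, which contradicts blow-up. Hence $T_{\max}=\infty$, $u$ is globally bounded, and $\|v\|_\infty\le\frac{\mu}{\lambda}M$.

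\emph{Main obstacle.} The main technical point is justifying the comparison principle on the unbounded domain $\R$. Its coefficients are $v_x$, which is only bounded, and $-\chi v_{xx}$, which is bounded on compact time intervals. For bounded classical solutions on $[0,T]\times\R$, I would handle this with the standard Phragmén–Lindelöf type maximum principle, applied to $e^{-Kt}(u-M)$ and using a barrier such as $\epsilon\cosh(\delta x)e^{Ct}$. The local theory, including the semigroup estimates and regularity, is routine and can be cited from \cite{SaSh1}, with only the time dependence of $r(t,x-s(t))$ to be tracked.
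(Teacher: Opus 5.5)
Your proposal is correct and takes essentially the same route the paper relies on. Local well-posedness comes from a mild-solution fixed point in $C^b_{\rm unif}(\R)$ as in \cite{SaSh1}. Global boundedness for $b>\chi\mu$ comes from rewriting the first equation as $u_t=u_{xx}-\chi v_xu_x+u\big(r-\chi\lambda v-(b-\chi\mu)u\big)$, dropping $-\chi\lambda v\le 0$, and comparing with the constant $\max\{\|u_0\|_\infty,\bar r/(b-\chi\mu)\}$, exactly as the paper does at the start of Section 2; your $r_{\sup}$ coincides with $\bar r$ by the monotonicity in {\bf (H1)}.
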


Throughout this paper, %we assume that $r(x)$ is as in {\bf Case 1} or {\bf Case 2}.  
we use the following well-defined constants:
\begin{equation}
\label{r-c-star-1}
c=\frac{1}{\omega}\int_{0}^{\omega}s'(t)dt,\quad
r^*=\frac{1}{\omega}\int_{0}^{\omega}r(t,\infty)dt,\quad c^*=2\sqrt {r^*}
\end{equation}
and 
\begin{equation}
\label{r-c-star-2}
 \bar r=\max_{t\in [0,\omega]}r(t,\infty), \quad \underline r=\min_{t\in [0,\omega]}r(t,\infty).
\end{equation}
In the absence of chemotaxis (i.e., $\chi=0$), the limiting equation of system \eqref{Keller-Segel-eq0} as $x\to\infty$ is the following time-periodic Fisher-KPP equation:
\begin{equation}
\label{KPP-periodic}
u_{t}=u_{xx}+u(r(t,\infty)-bu).
\end{equation}
The corresponding spatially homogeneous equation is
\begin{equation}
\label{ODE-periodic}
u_{t}=u(r(t,\infty)-bu).
\end{equation}
It follows from \cite[Theorem 5.2.1]{Zhao} that \eqref{ODE-periodic} admits a unique positive $\omega$-periodic solution $p(t)$, which is globally stable with respect to positive initial values. In view of \cite[Theorems 4.1 and 4.2]{LiYiZh}, the time-periodic Fisher-KPP equation \eqref{KPP-periodic} has a spreading speed $c^{*}$, which is also the minimal wave speed of 
time-periodic positive traveling wave solutions connecting $p(t)$ and $0$.

We also impose the following assumption:

\medskip

\noindent {\bf (H2)} $b>\chi\mu$ and $b\ge \big(1+\frac{1}{2}\frac{(\sqrt{r^*}-\sqrt{\lambda})_+}{(\sqrt{r^*}+\sqrt{\lambda})}\big)\chi\mu$.

\medskip

It is readily seen that $b\ge \big(1+\frac{1}{2}\frac{(\sqrt{r^*}-\sqrt{\lambda})_+}{(\sqrt{r^*}+\sqrt{\lambda})}\big)\chi\mu$ implies $b\ge \chi\mu$. Moreover, if  $b>\chi\mu$ and $\lambda \geq r^{*}$, then $(\sqrt{r^*}-\sqrt{\lambda})_{+}=0$, and hence {\bf (H2)} is automatically satisfied.
%$b\ge \big(1+\frac{1}{2}\frac{(\sqrt{r^*}-\sqrt{\lambda})_+}{(\sqrt{r^*}+\sqrt{\lambda})}\big)\chi\mu $ (and hence {\bf (H2)}). 
Biologically, the condition $\lambda \geq r^{*}$ means that the degradation rate of the chemoattractant is at least as large as the temporal average of the limiting intrinsic growth rate of the species over one period. The conditions in {\bf (H2)} indicate that the chemotactic effect is small relative to the self-limitation rate of the biological species.

\smallskip

%Our first result concerns the vanishing, persistence and spreading of solutions of \eqref{Keller-Segel-eq0} for nonempty compact supported initial data.
Our first result concerns the vanishing, persistence, and spreading dynamics of solutions to \eqref{Keller-Segel-eq0} with nonempty compactly supported initial data.

\begin{tm}\label{compact-support}
Suppose that {\bf (H1)} and {\bf (H2)} hold, and let \(u_0\in C^b_{\rm unif}(\R)\) be nonnegative with nonempty compact support. Then the following statements hold:
\begin{itemize}
\item[(1)]
If  $c>c^*$, then $\lim_{t\to\infty}u(t,x;u_0)=0$ uniformly for $x\in\R$.

\item[(2)]
If ${-c^*\leq}c<c^*$, then for every $0<\varepsilon<\frac{c^*-c}{2}$, there hold
$$
\lim_{t\to\infty}\sup_{x\le (c-\varepsilon)t}u(t,x;u_0)=\lim_{t\to\infty}\sup_{x\ge (c^*+\varepsilon)t}u(t,x;u_0)=0,
$$
and
$$
\liminf_{t\to\infty}\inf_{(c+\varepsilon)t\le x\le (c^*-\varepsilon)t}u(t,x;u_0)>0.
$$
 Moreover, if $b>2\chi\mu$, then
$$\lim_{t\to\infty} \sup_{(c+\varepsilon)t\le x\le (c^*-\varepsilon)t}|u(t,x;u_0)-p(t)|=0.$$
\item[(3)]
 If $c<-c^*$, then for every $0<\varepsilon<c^*$, there hold
 $$
\lim_{t\to\infty}\sup_{x\le (-c^*-\varepsilon)t}u(t,x;u_0)=\lim_{t\to\infty}\sup_{x\ge (c^*+\varepsilon)t}u(t,x;u_0)=0,
$$
and
$$
\liminf_{t\to\infty}\inf_{(-c^*+\varepsilon)t\le x\le (c^*-\varepsilon)t}u(t,x;u_0)>0.
$$
 Moreover, if $b>2\chi\mu$, then
$$\lim_{t\to\infty} \sup_{(-c^*+\varepsilon)t\le x\le (c^*-\varepsilon)t}|u(t,x;u_0)-p(t)|=0.$$
\end{itemize}
\end{tm}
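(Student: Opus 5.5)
Following the approach of \cite{ShXu1, ShXu2}, we rewrite the first equation of \eqref{Keller-Segel-eq0} in non-divergence form,
$$
u_t=u_{xx}-\chi v_x u_x+u\big(r(t,x-s(t))-\chi\lambda v-(b-\chi\mu)u\big),
$$
using $v_{xx}=\lambda v-\mu u$. Under {\bf (H2)}, Proposition \ref{existence-uniqueness} gives global boundedness. Comparison with the spatially homogeneous problem $\bar u_t=\bar u(\bar r-(b-\chi\mu)\bar u)$ yields $\limsup_{t\to\infty}\|u\|_\infty\le \bar r/(b-\chi\mu)$. In addition, the elliptic estimates $0\le v\le \frac{\mu}{\lambda}\|u\|_\infty$ and $|v_x|\le \frac{\mu}{\sqrt\lambda}\|u\|_\infty$ hold.

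\textbf{Upper bounds.} Since $v\ge 0$, $u$ is a subsolution of the scalar linear equation $w_t=w_{xx}-\chi v_xw_x+r(t,x-s(t))w$. The key point is to control the drift. Following \cite{SaShXu}, I would take exponential supersolutions of the form
$$
\bar w(t,x)=M\exp\Big(-\kappa\big(x-c^*t\big)+\int_0^t (r(\tau,\infty)-r^*)\,d\tau\Big),
$$
with $\kappa=\sqrt{r^*}$, or a slightly perturbed $\kappa$. The chemotactic drift term $-\chi v_x \bar w_x=\chi\kappa v_x\bar w$ must then be absorbed. This is done using the refined estimate on $v_x$ in terms of the $u$-profile, which is exactly where the second inequality in {\bf (H2)} enters. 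In \cite{SaShXu} one shows that $\chi\kappa v_x-\chi\lambda v+\chi\mu u\le (b-\chi\mu)u$-type terms, so that $\bar w$ remains a supersolution of the full nonlinear equation. Together with $r(t,x)\le r(t,\infty)$, this gives $\sup_{x\ge(c^*+\varepsilon)t}u\to0$.

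For the left side, I would use $r(t,x-s(t))\le r(t,-\infty)+\delta$ for $x-s(t)\le -L$. A supersolution $\bar w=M e^{\eta(x-s(t)+L)}$ combined with $e^{-\delta' t}$-type decay, glued with the global bound, gives decay on $x\le (c-\varepsilon)t$, since $s(t)=ct+O(1)$. For part (1), when $c>c^*$, the right-going supersolution moving at speed $c^*+\varepsilon<c$ leaves the favorable region. Where $x>(c^*+\varepsilon)t$ we have decay; on the rest, $x-s(t)\to-\infty$, so the growth rate is negative there. Combining the two regions, in the standard way of \cite{LBSF, FaPeZh}, gives uniform convergence to $0$. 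Part (3) is analogous for the left half, because there $x\le(-c^*-\varepsilon)t$ eventually lies in the favorable zone and the problem behaves like the periodic KPP equation.

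\textbf{Lower bounds (persistence).} In the moving frame $\xi=x-(c+\varepsilon')t$, the habitat is eventually favorable, with $r\ge r(t,\infty)-\delta$. I would build compactly supported periodic subsolutions $\underline u=\eta\, e^{-\frac{c'}{2}\xi}\phi(t)\cos(\pi\xi/2L)$ on a large moving interval, for speeds $c'\in(c+\varepsilon,c^*-\varepsilon)$. Here $\phi$ solves the periodic ODE making the principal Floquet exponent positive, which is possible because $c'^2/4<r^*$. To handle the chemotaxis terms I would use the facts that $\eta$ is small and that $\|v\|_{C^1}$ is controlled by $\|u\|_\infty$. This is the main obstacle: $v_x$ is not small a priori. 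Following \cite{SaSh1, ShXu1}, I would instead argue by contradiction. If $\inf u$ along some sequence in the cone tends to $0$, then by Harnack-type estimates $u$ is small on a large interval, hence $v,v_x$ are small there (the kernel of $(\lambda-\partial_{xx})^{-1}$ decays exponentially), and the subsolution argument then forces growth.

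Finally, for $b>2\chi\mu$, convergence to $p(t)$ follows from the stability result of \cite{SaSh4}. Take limits of time-shifted solutions along $t_n=n\omega+t_0$ in the cone. These limits are entire positive solutions bounded below, with $r\equiv r(t,\infty)$, so they coincide with $p(t)$. Uniform convergence on the cone then follows by a compactness argument.
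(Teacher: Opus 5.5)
Your proposal has a genuine gap in the persistence lower bound, which is the core of parts (2) and (3). You correctly identify the obstacle ($v_x$ is not small a priori) and the right ingredients: the Harnack-type Lemma~\ref{new-lm3} and the localized bound of Lemma~\ref{new-lm2}, which together give \eqref{E2-1}, i.e. $|\chi v_x|+\chi\lambda v\le C_{R,p}u^{1/p}+\varepsilon_R M$. But the step ``the subsolution argument then forces growth'' produces no contradiction as stated. Lemma~\ref{new-lm3} bounds earlier values by later ones, so smallness of $u(t_n,x_n)$ propagates only backward over a bounded window. Exponential growth on that window, starting from a level you do not control and which may itself be tiny, is perfectly consistent with $u(t_n,x_n)\to 0$.

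What is missing is a mechanism that gives a \emph{uniform} lower bound at the start of the comparison window, while ensuring the drift is small wherever the comparison is used. The paper does this as follows.
\begin{itemize}
\item It truncates the drift to $A=\chi\tilde v_x/\max\{1,|\chi\tilde v_x|\eta^{-1}\}$.
\item It proves (Lemma~\ref{lem-001-3}, via $C^1$-continuous dependence on $A$ over one period) that for any $\|A\|_\infty<\eta$ the auxiliary Dirichlet problem \eqref{aux-new-eq01} has a positive entire solution. This solution is bounded below in the interior, uniformly in $\bar c\in[c+\varepsilon,c^*-\varepsilon]$.
\item It makes this solution smaller than $\bar r/(\bar b-\chi\mu)$ by taking $\bar b$ large.
\item It runs a first-touching argument. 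At a touching point $\tilde u$ equals this small function, so $|\chi\tilde v_x|<\eta$ there and $A=\chi\tilde v_x$ nearby, and local comparison applies.
\end{itemize}
Your contradiction scheme could be closed differently. Work in the frame moving with speed $x_n/t_n$ and take the last time $\tau_n<t_n$ at which $\max_{|\xi|\le R}\tilde u=\epsilon_0$ (or $T$ if there is none). Lemma~\ref{new-lm3} then gives $\tilde u(\tau_n+1,\cdot)\ge c(\epsilon_0)>0$ on $[-l,l]$, and on $[\tau_n+1,t_n]$ the drift is genuinely small, so linear comparison applies. Some such step has to be supplied.

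Your explicit function $\eta e^{-c'\xi/2}\phi(t)\cos(\pi\xi/2L)$ also fails as a subsolution under even a small drift near the endpoints, since $|\underline u_\xi|/\underline u$ is unbounded there; shrinking $\eta$ does not help. This is why the paper compares with solutions of the linear problem with drift $A$ rather than checking a pointwise inequality.

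Two smaller points concern the upper bounds.
\begin{itemize}
\item \textbf{Right tail.} The (H2)-estimate $\chi\kappa v_x-\chi\lambda v\le(b-\chi\mu)\bar w$ holds only once $u\le\bar w$ is already known. So saying $\bar w$ ``remains a supersolution of the full nonlinear equation'' is circular. The paper breaks the circle with a Schauder fixed point on $\{0\le u\le\min\{Me^{-\kappa x}\psi(t),M\}\}$ (Lemma~\ref{lem-001-1}).
\item \textbf{Left tail and part (1).} Your supersolution $Me^{\eta(x-s(t)+L)}$ on $\{x\le s(t)-L\}$ is a legitimate alternative to the paper's compactness argument. (The paper shows that limits of shifted solutions solve a system with $r(t+t^*,-\infty)$, whose bounded entire solutions vanish.) To make it work, drop the $e^{-\delta' t}$ factor, which would break the boundary comparison at $x=s(t)-L$, where $u$ need not decay. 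Take $\eta\le\sqrt{\lambda}$ small, so that $|v_x|\le\sqrt{\lambda}\,v$ makes $-\chi\eta v_x-\chi\lambda v\le 0$. Finally, use $\max_t r(t,-\infty)<0$.
\end{itemize}
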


Biologically, Theorem \ref{compact-support} shows that, when the chemotactic effect is weak relative to the self-limitation effect and the species is initially confined to a bounded region, three distinct propagation scenarios may occur. If $c>c^*$, then the species eventually becomes extinct. If $-c^*\leq c<c^*$, then the species persists and spreads along the shifting habitat into an increasingly large spatial region. If $c<-c^*$, then the species persists and spreads in both directions with asymptotic spreading speed $c^*$. Moreover, in the latter two cases, if $b>2\chi\mu$, then the population converges uniformly to the positive time-periodic state $p(t)$ in the corresponding interior spreading regions.

%Biologically, Theorem \ref{compact-support} implies that in the  case where the chemotactic sensitivity is small relative to the self-limitation rate of the species and the species initially lives in a bounded region, there are three possibilities:  if $c>c^{*}$, then the species will die out in the long run;  if $-c^{*}\leq c<c^{*}$, then the species will persist and spread along the shifting habitat into larger and larger regions; and if $c<-c^{*}$, the species will persist and spread towards both directions at the asymptotic spreading speed $c^{*}$.

Our second result concerns the vanishing, persistence, and spreading dynamics of solutions to \eqref{Keller-Segel-eq0} with front-like initial data.

\begin{tm}\label{right-support}
Suppose that {\bf (H1)} and {\bf (H2)} hold,
and let $u_0(x)$ be nonnegative, bounded, and $u_0(x)=0$ for $x\ll  -1$ and $\liminf_{x\to\infty}u_0(x)>0$. Then the following statements are valid:
\begin{itemize}
\item[(1)]
 If $c\geq-c^*$, then for every $\varepsilon>0$, there hold
$$\lim_{t\to\infty}\sup_{x\leq(c-\varepsilon)t}u(t,x;u_0)=0,  \quad \text{and}
\quad
\liminf_{t\to\infty}\inf_{x\ge  (c+\varepsilon)t}u(t,x;u_0)>0.$$
 Moreover, if $b>2\chi\mu$, then
$$\lim_{t\to\infty} \sup_{x\ge (c+\varepsilon)t}|u(t,x;u_0)-p(t)|=0.$$
\item[(2)]
If $c<-c^*$, then for every $\varepsilon>0$, there hold 
$$
\lim_{t\to\infty}\sup_{x\leq(-c^*-\varepsilon)t}u(t,x;u_0)=0,\quad \text{and}
\quad
\liminf_{t\to\infty}\inf_{x\ge  (-c^*+\varepsilon)t}u(t,x;u_0)>0.$$
 Moreover, if $b>2\chi\mu$, then
$$\lim_{t\to\infty} \sup_{x\ge (-c^*+\varepsilon)t}|u(t,x;u_0)-p(t)|=0.$$
\end{itemize}
\end{tm}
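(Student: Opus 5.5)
The proof rests on comparison with scalar time-periodic Fisher--KPP equations. We rewrite the first equation of \eqref{Keller-Segel-eq0} in nondivergence form,
$$u_t=u_{xx}-\chi v_x u_x+u\big(r(t,x-s(t))-\chi\lambda v-(b-\chi\mu)u\big),$$
using $v_{xx}=\lambda v-\mu u$. Proposition \ref{existence-uniqueness} and {\bf (H2)} give a global bound $\|u(t,\cdot)\|_\infty\le M$. From it, $\|v\|_\infty,\|v_x\|_\infty\le C$, since $v=\mu(\lambda-\partial_{xx})^{-1}u$ has an explicit kernel $\frac{\mu}{2\sqrt\lambda}e^{-\sqrt\lambda|x-y|}$.

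\textbf{Upper estimates.} For the statement $\sup_{x\le (c-\varepsilon)t}u\to0$ when $c\ge -c^*$, we move to the frame $\xi=x-s(t)$. There $r(t,\xi)\le r(t,-\infty)/2<0$ for $\xi\le -L$. Since $s(t)=ct+O(1)$ by periodicity of $s'$, the region $x\le(c-\varepsilon)t$ corresponds to $\xi\le -\varepsilon t+O(1)$. We build a supersolution of the form
$$\bar u(t,x)=\min\Big\{M,\;K e^{\delta(x-s(t)+\varepsilon t/2)}\Big\}$$
for small $\delta>0$, treating the drift term $-\chi v_x u_x$ as a perturbation of size $C\delta$. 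We also use $-\chi\lambda v\le 0$ and $-(b-\chi\mu)u\le0$. In the unfavorable region the exponential decays, provided $\delta^2+\chi C\delta+\delta(c+\varepsilon/2)$ and the growth rate combine to a negative quantity. To handle the transition region we take $\delta$ small and use a pointwise comparison (the drift coefficient $\chi v_x$ is a bounded given function, so linear comparison applies). For $x\le(-c^*-\varepsilon)t$ in case (2), we instead compare with the KPP upper estimate. Following \cite{SaShXu}, we use the supersolution $\min\{M,Ke^{-\kappa(-x-(c^*+\varepsilon/2)t)}\}$ with $\kappa=\sqrt{r^*}$ plus a periodic correction $e^{\int_0^t(r(\tau,\infty)-r^*)d\tau}$. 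Condition {\bf (H2)} is exactly what absorbs the chemotactic term $\chi v_x u_x$ and $\chi\lambda v$ here, as in the spreading-speed bound of \cite{SaShXu}.

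\textbf{Lower estimates (persistence).} This is the main obstacle: the chemotactic drift $\chi v_x u_x$ can push mass away, and the reaction has $-\chi\lambda v$. We follow the strategy of \cite{SaShXu,ShXu1}. First we prove a \emph{local persistence} lemma. If $u_0\ge\sigma$ on a sufficiently long interval $[x_0,x_0+L]$ lying in the favorable region, i.e.\ $x_0-s(0)\gg1$, then $u(\omega,\cdot)\ge\sigma$ on a shifted interval, uniformly in small $\sigma$. To prove it, we note that when $u$ is small, $v$ and $v_x$ are small, since both are controlled by $\sup u$ via the kernel. The equation is then a small perturbation of $u_t=u_{xx}+u(r(t,x-s(t))-\eta)$, whose Dirichlet principal periodic eigenvalue on a long moving interval $[s(t)+L_0+c't,\,s(t)+L_0+c't+L]$ with relative speed $c'$ is positive when $|c'|<c^*$ and $L$ is large. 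A compactly supported periodic eigenfunction multiple then serves as a subsolution. Iterating over periods gives $\liminf\inf u>0$ along moving windows with any speed in $(c+\varepsilon,\infty)\cap(-c^*+\varepsilon,c^*-\varepsilon)$; here the window must stay inside $\xi\gg1$, which requires speed $>c$.

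Since the initial data is front-like, $\liminf_{x\to\infty}u_0>0$. Spatially homogeneous comparison at $x\to\infty$ shows that $u$ stays uniformly positive for $x\ge X(t)$, where $X(t)$ grows like $(c+\varepsilon)t$ or $(-c^*+\varepsilon)t$. Concretely, we apply the local persistence lemma to a family of windows starting at every $x_0\ge X_0$ and moving at the slowest admissible speed $\max\{c,-c^*\}+\varepsilon$. This gives the uniform positive lower bounds in (1) and (2).

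\textbf{Convergence to $p(t)$ when $b>2\chi\mu$.} Let $\underline m=\liminf\inf u$ and $\bar m=\limsup\sup u$ over the moving region $x\ge(\max\{c,-c^*\}+\varepsilon)t$. We compare with ODEs on the region deep inside it, where $r(t,x-s(t))\to r(t,\infty)$. Writing $\chi\lambda v\approx\chi\mu u$ on nearly constant regions, we use the bounds $\chi\mu\underline m\le\lambda v\le\chi\mu \bar m$ (up to errors) and $|v_x|$ small. This yields the periodic ODE inequalities
$$\bar U'\le \bar U(r(t,\infty)-(b-\chi\mu)\bar U-\chi\mu \underline U),\qquad \underline U'\ge \underline U(r(t,\infty)-(b-\chi\mu)\underline U-\chi\mu\bar U).$$
Their periodic solutions satisfy $\bar P=\underline P=p$ precisely because $b-\chi\mu>\chi\mu$: subtracting the averaged equations gives $(b-2\chi\mu)\langle \bar P-\underline P\rangle\le0$. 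This squeezes $u$ to $p(t)$, as in \cite{SaSh4}. The technical point is localizing these ODE comparisons to a region moving with speed $\max\{c,-c^*\}+\varepsilon$. We handle it by a compactness argument: take limits of $u(t_n+\cdot,x_n+\cdot)$ along sequences with $x_n-(\max\{c,-c^*\}+\varepsilon)t_n\to\infty$, obtaining entire solutions of the limiting system with $r(t,\infty)$ that are bounded between the constants above.
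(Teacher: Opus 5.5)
\textbf{Verdict: the proposal has a genuine gap in the persistence step.}

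\textbf{What is fine.} Your upper bounds work. The supersolution $\min\{M,Ke^{\delta(x-s(t)+\varepsilon t/2)}\}$ is a valid and more explicit alternative to the paper's compactness argument for $\sup_{x\le(c-\varepsilon)t}u\to0$. The left-tail bound in (2) is Lemma~\ref{lem-001-1}(ii). Your convergence-to-$p(t)$ step is the paper's compactness argument plus uniqueness of strictly positive entire solutions.

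\textbf{Gap 1: control of the chemotactic terms.} Your local persistence lemma rests on ``when $u$ is small, $v$ and $v_x$ are small, since both are controlled by $\sup u$ via the kernel''. For front-like data $u$ is not small near $x=+\infty$, so $\sup u$ is not small. The kernel only gives
$\chi\lambda v(t,x)\le\chi\mu\sup_{|y-x|\le R}u(t,y)+\chi\mu Me^{-\sqrt{\lambda}R}$, and similarly for $v_x$. So $v$ is small at a point only if $u$ is small on a whole ball of radius $R\gg1$. At a first touching point with a small subsolution you only know $u$ is small \emph{at that point}.

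This matters: the global bound gives only $\chi\lambda v\le\chi\mu\bar r/(b-\chi\mu)+o(1)$, which exceeds $\bar r$ when $b<2\chi\mu$, and {\bf (H2)} allows that case. The missing ingredient is the Harnack-type inequality of Lemma~\ref{new-lm3}, $u(t,x)\le C[u(t,y)]^{1/p}$ for $|x-y|\le R$. Combined with Lemma~\ref{new-lm2}, it gives the pointwise bound \eqref{E2-1}, $|\chi v_x|+\chi\lambda v\le C_{R,p}u^{1/p}+\varepsilon_RM$. Only then are the perturbation terms small exactly where $u$ is small.

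Even with $|\chi v_x|<\eta$, $\sigma\phi$ is not a subsolution. Near the window's ends $\phi\to0$ while $\phi_x\not\to0$, so $-\chi v_x\sigma\phi_x$ is not dominated by $\lambda_1\sigma\phi$, however small $\eta$ is. The paper handles this by putting the truncated actual drift $A$ into the window problem. It builds a positive entire solution uniformly in $\|A\|_\infty<\eta$ (Lemma~\ref{lem-001-3}) and compares at first touching points, where $A=\chi\tilde v_x$.

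\textbf{Gap 2: the case $c\ge c^*$ in part (1).} Part (1) covers $c\ge c^*$, but your window mechanism cannot reach it. You need windows moving faster than $c$ (to stay in the favorable region) and with speed in $(-c^*,c^*)$ (for a positive Dirichlet eigenvalue). For $c\ge c^*$ your set $(c+\varepsilon,\infty)\cap(-c^*+\varepsilon,c^*-\varepsilon)$ is empty. ``Spatially homogeneous comparison at $x\to\infty$'' is not an argument either, since a homogeneous subsolution fails in the unfavorable region.

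The paper uses the front-like data differently. In the frame moving at speed $c+\tilde\varepsilon$ it takes a subsolution $\underline u$ on the half-line $\{x>-l\}$, Dirichlet at $-l$, kept below a small threshold $\xi_1$ and \emph{increasing in $x$}. Monotonicity gives $(c+\tilde\varepsilon-B)\underline u_x\ge\bar c\,\underline u_x$. So the actual drift can be replaced by any $\bar c\le c+\tilde\varepsilon-\xi$ with $|\bar c|<c^*$, however large $c$ is. Positivity of $\underline u$ comes from tiling with translates of the window entire solutions (Step~2). The half-line comparison $\underline u\le\tilde u$ requires ruling out touching at $x=+\infty$ (Step~3), which is where $\liminf_{x\to\infty}u_0>0$ enters.

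Within your own framework a repair is possible. Reach a target point $(t,x)$ with $x\ge(c+\varepsilon)t$ by a window moving at any speed $c'\in(-c^*,c^*)$ with $c'<c+\varepsilon$, started at $x-c't$. On $[0,t]$ its distance to the habitat edge stays at least $\min\{\varepsilon,c+\varepsilon-c'\}t-O(1)$, so the window need not stay favorable forever, only until time $t$.
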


Biologically, Theorem \ref{right-support} shows that, when the chemotactic effect is weak relative to the self-limitation effect and the species initially occupies the right-hand side of the habitat, two distinct propagation scenarios may occur.  if $c\geq -c^{*}$, then the species persists and spreads along the shifting habitat.
If $c< -c^{*}$, then  the species persists and spreads leftward with asymptotic spreading speed $c^{*}$. Moreover, in both cases, if $b>2\chi\mu$, then the population converges uniformly to the positive time-periodic state $p(t)$ behind the corresponding propagation front.

\medskip

A positive solution $(u(t,x),v(t,x))$ of system \eqref{Keller-Segel-eq0} is  called a {\it time $\omega$-periodic forced wave solution} connecting $(0, 0)$ and $(p(t), \frac{\mu}{\lambda}p(t))$ if
 it is defined for all $t, x\in\R$, and can be written as $(u(t,x),v(t,x))=(U(t, x-ct),V(t, x-ct))$ for some unctions $U, V: \R\times\R\to\R$ satisfying 
$$
\lim_{z\to-\infty} U(t,z)=\lim_{z\to-\infty}V(t,z)=0 \quad {\rm uniformly} \,\ {\rm in}\,\ t\in\R,
$$ 
$$
\lim_{z\to\infty} |U(t,z)-p(t)|=\lim_{z\to\infty}|V(t,z)-\frac{\mu}{\lambda} p(t)|=0 \quad {\rm uniformly} \,\ {\rm in}\,\ t\in\R,
$$
and
$$U(t+\omega, z)=U(t, z), \quad\quad V(t+\omega, z)=V(t, z), \quad t,z\in\R.$$
 %It is clear that $(U(\cdot, \cdot),V(\cdot, \cdot))$ satisfies
% \begin{equation}\label{wave-eq}
%\begin{cases}
%U_t= U_{xx}+s^{'}(t)U_x-\chi(UV_x)_x+U(r(t, x)-bU),\quad t\in\R, \,\ x\in\R,\cr
%0 =V_{xx}-  \lambda V +\mu U,\quad t\in\R, \,\, x\in\R.
%\end{cases}
%\end{equation}

By  the definition of $c$, the function $s(t)-ct$ is  
$\omega$-periodic. Hence, defining $(\tilde U(t, x), \tilde V(t, x)):=(U(t, x+s(t)-ct), V(t, x+s(t)-ct))$ is also $\omega$-periodic in $t$. Moreover,
$(\tilde U(t, x-s(t)), \tilde V(t, x-s(t)))=(U(t, x-ct), V(t, x-ct))$.
%This gives rise to an equivalent form $(\tilde U(t, x-s(t)), \tilde V(t, x-s(t)))$ for  a forced traveling wave solution of system \eqref{Keller-Segel-eq0}.
Thus, a time $\omega$-periodic forced wave solution can equivalently be represented in the form $(\tilde U(t, x-s(t)), \tilde V(t, x-s(t)))$.

Our third result concerns the existence of time-periodic forced wave solutions.

\begin{tm}\label{forced-wave}
If $b>(1+\frac{\bar r}{\underline r})\chi\mu$ and 
$c>\frac{\chi\mu \bar r }{2\sqrt{\lambda}(b-\chi\mu)}-2\sqrt{\frac{1}{\omega}\int_{0}^{\omega}(r(t, \infty)-\frac{\chi\mu\bar r}{b-\chi\mu})dt}$, then system \eqref{Keller-Segel-eq0} has a time $\omega$-periodic forced wave solution $(u(t,x),v(t,x))=(U(t, x-ct),V(t, x-ct))$ connecting $(0, 0)$ and $(p(t), \frac{\mu}{\lambda}p(t))$.
\end{tm}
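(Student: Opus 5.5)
I would work in the moving frame $z=x-s(t)$ and look for an $\omega$-periodic solution $(\tilde U(t,z),\tilde V(t,z))$ of
\begin{equation*}
\begin{cases}
U_t=U_{zz}+s'(t)U_z-\chi(UV_z)_z+U(r(t,z)-bU),\\
0=V_{zz}-\lambda V+\mu U.
\end{cases}
\end{equation*}
The remark after the definition of forced waves then gives the wave in the form $(U(t,x-ct),V(t,x-ct))$. The strategy is the one used for chemotaxis traveling waves in \cite{SaShXu, ShXu1}. Freeze the nonlocal term: for $w$ in a closed convex set $\mathcal E$ of nonnegative, $\omega$-periodic, bounded functions, solve the second equation to get $V=V(\cdot;w)$, with $V_{zz}=\lambda V-\mu w$. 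Then the $U$-equation becomes the scalar periodic parabolic problem
\[
U_t=U_{zz}+(s'(t)-\chi V_z)U_z+U\big(r(t,z)-\chi\lambda V+\chi\mu w-bU\big).
\]
Next build sub- and super-solutions for this scalar problem, uniform over $w\in\mathcal E$. Obtain a periodic solution between them via the periodic Poincar\'e map (monotone iteration on bounded intervals $[-L,L]$, then $L\to\infty$). Finally, find a fixed point $w=U$ by Schauder's theorem in a weighted compact-open topology.

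\textbf{Super-solution.} Using $\|V\|_\infty\le \frac{\mu}{\lambda}\|w\|_\infty$ and $|V_z|\le \frac{\mu}{2\sqrt\lambda}\|w\|_\infty$, I take $\mathcal E$ to be the set of functions with $0\le w\le M:=\frac{\bar r}{b-\chi\mu}$. Then the constant $M$ is a super-solution, since $r\le \bar r$ and $\chi\mu w-bU\le (\chi\mu-b)M$ at $U=M$. The condition $b>(1+\bar r/\underline r)\chi\mu$ should be exactly what is needed to keep the effective growth rate $r(t,\infty)-\chi\mu M$ positive. Indeed,
\[
r(t,\infty)-\chi\mu M\ge \underline r-\frac{\chi\mu\bar r}{b-\chi\mu}>0 \iff b>\Big(1+\frac{\bar r}{\underline r}\Big)\chi\mu .
\]
For the left tail I also want an exponentially decaying super-solution $\min\{M,\,e^{\kappa z+\theta(t)}\}$ with $\kappa>0$ and $\theta$ periodic. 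On the left $r(t,z)\le r(t,-\infty)/2<0$, so this should hold for large negative $z$. I would verify it using the drift bound $\frac{\chi\mu M}{2\sqrt\lambda}$ and the Lipschitz bound $\chi\mu M$ on the extra reaction term.

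\textbf{Sub-solution (main obstacle).} Far to the right, the linearized lower equation reads
\[
U_t\ge U_{zz}+(s'(t)-\chi V_z)U_z+U\big(r(t,z)-\chi\mu M-bU\big),
\]
where the first-order coefficient is perturbed by at most $\frac{\chi\mu\bar r}{2\sqrt\lambda(b-\chi\mu)}$ in absolute value. Write $\tilde r(t)=r(t,\infty)-\frac{\chi\mu \bar r}{b-\chi\mu}$ and $\tilde c^*=2\sqrt{\frac1\omega\int_0^\omega\tilde r\,dt}$. The hypothesis on $c$ says $c+\tilde c^*>\frac{\chi\mu\bar r}{2\sqrt\lambda(b-\chi\mu)}$, so even the worst drift leaves an effective speed bigger than $-\tilde c^*$. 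In the frame $z=x-s(t)$ the habitat then "outruns" the decay. I would build a compactly supported periodic sub-solution of the form $\delta e^{\eta(t)}\phi(z-z_0)$. Here $\phi$ is the principal eigenfunction of $\phi''+\gamma\phi'+\rho\phi$ on a large interval, with $\gamma$ the worst-case averaged drift and $\eta(t)$ chosen so that $\eta'=\tilde r(t)-\frac1\omega\int_0^\omega\tilde r$ absorbs the time dependence. The strict inequality gives a positive principal eigenvalue on a long interval placed far right. The delicate point is handling the time-dependent, $w$-dependent drift $s'(t)-\chi V_z$ rather than a constant one; I would conjugate by $e^{-\gamma z/2}$ and bound the drift uniformly, accepting the worst constant. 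Taking the maximum over translates, or letting $z_0\to\infty$ along the solution, should give $\liminf_{z\to\infty}U>0$ uniformly in $w$.

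\textbf{Fixed point and limits.} The map $w\mapsto U(\cdot;w)$ is continuous and compact by parabolic Schauder estimates. Its fixed point $U$ lies in $[\underline U,\min\{M,e^{\kappa z+\theta}\}]$, which gives $U\to0$ as $z\to-\infty$ and positivity far right. To get $U(t,z)\to p(t)$ as $z\to\infty$, take limits of translates $U(t,z+z_n)$. These produce a bounded, positive, periodic, $z$-independent-drift entire solution of the limiting system with $r(t,\infty)$. I would then argue that it is spatially constant and equal to $p(t)$ using a comparison of $\limsup$/$\liminf$ ODEs in the style of \cite{SaSh4}. Here the condition $b>(1+\bar r/\underline r)\chi\mu$ should again serve as a contraction-type inequality. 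Convergence of $V$ follows from the elliptic equation.
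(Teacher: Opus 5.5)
\textbf{The gap is in the sub-solution.}

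A compactly supported profile $\delta e^{\eta(t)}\phi(z-z_0)$, with $\phi$ a Dirichlet principal eigenfunction of $\phi''+\gamma\phi'+\rho\phi$ on $(-L,L)$, is a sub-solution only if $\rho-\frac{\gamma^2}{4}-\frac{\pi^2}{4L^2}>0$, i.e.\ $|\gamma|<\tilde c^*$. This is a \emph{two-sided} restriction on the drift. It is exactly why Lemma~\ref{lem-001-3} is stated only for $\bar c\in[-c^*+\epsilon,c^*-\epsilon]$ and only for small perturbations $\|A\|_\infty<\eta$.

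The problem is worse than that. $\phi'>0$ near the left endpoint and $\phi'<0$ near the right one, and $\phi$ itself vanishes at both. So the perturbation $-\chi V_z\phi'$, which can have either sign and size up to $a:=\frac{\chi\mu\bar r}{2\sqrt\lambda(b-\chi\mu)}$, is not absorbed by $\lambda_1\phi$ there and must be handled at both ends. ``Accepting the worst constant'' therefore leads to $|c|+a<\tilde c^*$.

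The theorem assumes only $c>a-\tilde c^*$ and allows arbitrarily large $c$. For large $c$ no compactly supported periodic sub-solution exists uniformly in $w$. For instance, take $c\ge c^*$ and $w\equiv M$ on a long interval around the support. Then $V_z\approx 0$ there, the growth coefficient is at most about $r(t,\infty)$, and the periodic Dirichlet principal eigenvalue on any bounded interval is, up to small errors, at most $r^*-\frac{c^2}{4}<0$. Intuitively, a bump in the favorable zone is swept leftward faster than it can spread. So your construction covers at most $a-\tilde c^*<c<\tilde c^*-a$.

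Also, in the frame $z=x-s(t)$ the drift $s'(t)$ multiplies $\phi'$, so it cannot be absorbed by the scalar factor $e^{\eta(t)}$. You should first pass to $\xi=x-ct$, as the paper does. There the drift is exactly $c$ and the habitat $r(t,\xi+ct-s(t))$ only oscillates periodically.

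\textbf{The missing idea} is a front-like sub-solution that is increasing in space and vanishes on a left half-line. Then the drift term has a definite sign, and only a \emph{lower} bound on $c-\chi\Psi_x$ is needed.

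The paper takes $U^-=\max\{\psi^\varepsilon,0\}$ with $\psi^\varepsilon(t,x)=\phi^\varepsilon(t,-x)$. Here $\phi^\varepsilon$ is the time-periodic traveling wave, with speed $c_\varepsilon$, of the ignition-type truncation $f_\varepsilon$ of $u\big(r(t,\infty)-\varepsilon-\frac{\chi\mu\bar r}{b-\chi\mu}-(b-\chi\mu)u\big)$. The truncation makes the wave cross zero, so $U^-$ is supported on a half-line and lies below $U^+$. Lemma~\ref{c-vare-limit} shows $c_\varepsilon\to\tilde c^*$, using monotonicity in $\varepsilon$ and the minimal-speed characterization of the periodic KPP equation. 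Where $\psi^\varepsilon>0$ one gets
\[
\mathcal A_u(\psi^\varepsilon)\ge (c+c_\varepsilon-\chi\Psi_x)\,\psi^\varepsilon_x\ge (c+c_\varepsilon-a)\,\psi^\varepsilon_x>0
\]
for small $\varepsilon$, because $\psi^\varepsilon_x>0$ and $\chi\Psi_x\le a$; negative $\Psi_x$ only helps. This uses precisely the stated hypothesis on $c$. The condition $b>(1+\bar r/\underline r)\chi\mu$ is what keeps the growth rate in $f_\varepsilon$ pointwise positive.

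Two smaller points.
\begin{enumerate}
\item Continuity of $w\mapsto U(\cdot;w)$ does not follow from Schauder estimates. Without uniqueness of the periodic solution between the barriers the map is not even well defined. The paper devotes Lemmas~\ref{u-star-star-lm1} and \ref{u-star-star-lm2} to this, via a part-metric argument followed by a sliding argument.
\item With your freezing $+\chi\mu w$, the left-tail super-solution needs $w\le U^+$ built into $\mathcal E$, so that $\chi\mu w-bU^+\le-(b-\chi\mu)U^+$. With only $w\le M$, the term $\chi\mu w$ need not be dominated by $r(t,-\infty)$.
\end{enumerate}
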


Our last result concerns the non-existence of time-periodic forced wave solutions.

\begin{tm}\label{nonexistence-forced-wave}
Suppose that ${\bf (H2)}$ holds. If  $c<-c^*$, then system \eqref{Keller-Segel-eq0} has no time $\omega$-periodic forced wave solution $(u(t,x),v(t,x))=(U(t, x-ct),V(t, x-ct))$ connecting $(0, 0)$ and $(p(t), \frac{\mu}{\lambda}p(t))$.
\end{tm}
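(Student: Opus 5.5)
We argue by contradiction, using the front-like spreading result, Theorem \ref{right-support}(2). Suppose that for some $c<-c^*$ there is a time $\omega$-periodic forced wave $(u(t,x),v(t,x))=(U(t,x-ct),V(t,x-ct))$ connecting $(0,0)$ and $(p(t),\frac{\mu}{\lambda}p(t))$. Since $U$ is $\omega$-periodic in $t$ and has uniform limits as $z\to\pm\infty$, it is bounded. We also have $\liminf_{z\to\infty}\inf_{t}U(t,z)\ge \min_t p(t)>0$, and $U(t,z)\to 0$ as $z\to-\infty$ uniformly in $t$.

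The idea is to compare the wave with a solution starting from front-like initial data lying below $u(0,\cdot)=U(0,\cdot)$. We choose a nonnegative, bounded, uniformly continuous $\tilde u_0\le U(0,\cdot)$ such that $\tilde u_0(x)=0$ for $x\ll -1$ and $\liminf_{x\to\infty}\tilde u_0(x)>0$. For instance, take $\tilde u_0(x)=\eta(x)U(0,x)$, where $\eta$ is a smooth cutoff with $\eta=0$ for $x\le -L$ and $\eta=1$ for $x\ge -L+1$.

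We then need the comparison $u(t,x)\ge u(t,x;\tilde u_0)$ for all $t\ge 0$. This is the main obstacle, because the chemotaxis system is not monotone. We have two options.

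\begin{itemize}
\item \emph{Avoid comparison altogether.} Apply Theorem \ref{right-support}(2) directly to $u_0=U(0,\cdot)$ itself. By uniqueness in Proposition \ref{existence-uniqueness}, the forced wave is exactly $u(t,x;U(0,\cdot))$. However, $U(0,x)$ is positive everywhere rather than vanishing for $x\ll-1$, so we would need to check that the proof of Theorem \ref{right-support}(2), namely its lower bound $\liminf_{t\to\infty}\inf_{x\ge(-c^*+\varepsilon)t}u>0$, uses only $\liminf_{x\to\infty}u_0>0$. Lower bounds of this kind are typically built from subsolutions placed under the data, so only the right tail matters and this should hold.

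\item \emph{Use a comparison argument.} Alternatively, use the standard technique of \cite{SaShXu}: the lower spreading bound is obtained by comparing with subsolutions of the scalar equation $u_t=u_{xx}-\chi v_xu_x+u(r-\chi\mu u\ldots)$, with $v$ treated as given. Those subsolutions can be placed beneath $u(t,x)$ for the wave just as well.
\end{itemize}

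Either way we obtain, for fixed $0<\varepsilon<c^*$, a $\delta>0$ and a $T$ such that $u(t,x)\ge\delta$ for all $t\ge T$ and all $x\ge(-c^*+\varepsilon)t$.

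Now we derive the contradiction. Since $c<-c^*$, we can pick $\varepsilon>0$ small so that $c<-c^*+\varepsilon$. Along the curve $x=(-c^*+\varepsilon)t$ we have $z=x-ct=(-c^*+\varepsilon-c)t\to+\infty$, which is consistent with the lower bound. Instead we evaluate along $x_t=(-c^*+\varepsilon)t$ and $t=n\omega$, using the left-end limit differently: take any point with $x\ge(-c^*+\varepsilon)t$ and $x-ct\to-\infty$. This is impossible, since $x-ct\ge(-c^*+\varepsilon-c)t>0$. So the lower bound alone is consistent, and we must use the upper bound as well.

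Theorem \ref{right-support}(2) gives $\sup_{x\le(-c^*-\varepsilon)t}u(t,x)\to0$. Along $x=(-c^*-\varepsilon)t$, choose $\varepsilon$ with $-c^*-\varepsilon>c$, which is possible since $c<-c^*$. Then $z=x-ct=(-c^*-\varepsilon-c)t\to+\infty$, so $U(t,z)\to p(t)$ and $u(t,x)$ stays near $p(t)\ge\min p>0$ there. This contradicts $u\to0$ on $x\le(-c^*-\varepsilon)t$.

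The upper bound requires a supersolution argument, which for front-like data bounded from above is the standard estimate $u\le Me^{-\kappa(x+c^*t+\varepsilon t)}$ under {\bf (H2)}. With the initial data $U(0,x)$, which decays to $0$ at $-\infty$ but perhaps not exponentially, this estimate is not directly available. This is the crux. We instead first show that $U(t,z)$ decays exponentially as $z\to-\infty$. In the moving frame, $U$ solves a linear-like equation whose coefficient $r(t,z+ct-s(t))\to r(t,-\infty)<0$ as $z\to-\infty$, and whose chemotactic terms $\chi v_x$ and $\chi v_{xx}=\chi(\lambda v-\mu u)$ are bounded. Comparison with $Ae^{\kappa z}$ for small $\kappa>0$ on a half-line $z\le -L$, using periodicity in $t$, then gives $U\le Ae^{\kappa z}$.

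With this exponential decay in hand, the supersolution argument from Theorem \ref{right-support}(2) applies to the initial data $U(0,\cdot)$, yielding $u(t,x)\to0$ for $x\le(-c^*-\varepsilon)t$ whenever $\varepsilon$ is small relative to $\kappa$. Combining this with the limit $U(t,z)\to p(t)$ along $x=(-c^*-\varepsilon)t$ from the previous paragraph gives the contradiction, so no such forced wave exists.
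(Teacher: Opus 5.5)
\textbf{Gap: the decay rate at $-\infty$ is too weak.}
Your final argument has the same architecture as the paper's. You bound $U(0,\cdot)$ by $\min\{M,Me^{\kappa x}\}$, apply Lemma \ref{lem-001-1}(ii) to $u(t,x)=U(t,x-ct)$, and contradict $U(t,z)\to p(t)$ along $z=(-c^*-\varepsilon-c)t\to+\infty$ once $c<-c^*-\varepsilon$. The lower-bound discussion in your first half is not needed. The paper gets the exponential bound in one line from $0<U(0,x)\le U^+(x)$, with $U^+$ as in \eqref{U-Def}, whose rate $\theta$ exceeds $|c|>\sqrt{r^*}$. That bound is built into the wave constructed in Theorem \ref{forced-wave}, so for an arbitrary wave your idea of proving decay by a half-line maximum principle is sound.

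The problem is the \emph{rate} you obtain. Lemma \ref{lem-001-1}(ii) needs $u_0\le\min\{M,Me^{\kappa x}\}$ with $0<\kappa\le\sqrt{r^*}$. It then gives $u\le Me^{\kappa(x+c_\kappa t)}\max_{t\in[0,\omega]}\psi(t)$ with $c_\kappa=\kappa+r^*/\kappa$. This speed equals $c^*$ only at $\kappa=\sqrt{r^*}$ and tends to $+\infty$ as $\kappa\to0$. So a small rate $\kappa$ yields vanishing only on $x\le-(c_\kappa+\varepsilon)t$. That says nothing once $c_\kappa\ge|c|$, which happens for all sufficiently small $\kappa$, and for all $\kappa$ outside a small neighborhood of $\sqrt{r^*}$ when $|c|$ is close to $c^*$. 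Nor does $U(0,x)\le Ae^{\kappa x}$ with small $\kappa$ imply $U(0,x)\le Me^{\sqrt{r^*}x}$ as $x\to-\infty$. Your phrase ``whenever $\varepsilon$ is small relative to $\kappa$'' has the dependence backwards. You need $c_\kappa\le c^*+\varepsilon$, i.e.\ $\kappa$ close to $\sqrt{r^*}$, or at least $c_\kappa<|c|$.

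\textbf{Repair.}
The small rate comes from using only boundedness of $\chi v_x$. Instead, use that $V(t,z)\to0$ as $z\to-\infty$ uniformly in $t$ (part of the definition of a forced wave) and $|V_z|\le\sqrt{\lambda}V$ (Lemma \ref{lem-001}), so $\chi|V_z|\to0$ there; also use $c<-c^*<0$.

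Write the $U$-equation as $U_t=U_{zz}+(c-\chi V_z)U_z+aU$ with $a=r-\chi\lambda V-(b-\chi\mu)U\le r\le-\rho<0$ on $z\le-L$. For $\kappa=\sqrt{r^*}$ you get $\kappa^2+(c-\chi V_z)\kappa+a<-r^*+\chi\sqrt{\lambda r^*}\,V-\rho<0$ for $L$ large. Hence $W=Ae^{\sqrt{r^*}z}$ is a strict supersolution on $z\le-L$.

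Choose $A$ with $W(-L)\ge\sup U$. A positive maximum of $U-W$ on $z\le-L$ would then be attained at an interior point, by $t$-periodicity and $U-W\to0$ as $z\to-\infty$. At that point $0\le a(U-W)<0$, a contradiction. Hence $U(0,x)\le\min\{M,Me^{\sqrt{r^*}x}\}$ for large $M$. Lemma \ref{lem-001-1}(ii) with $\kappa=\sqrt{r^*}$, which is admissible under {\bf (H2)}, then gives exactly the vanishing on $x\le(-c^*-\varepsilon)t$ that you need.
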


Our findings reveal the combined effects of chemotaxis and a time-periodic shifting environment on species propagation. A major analytical difficulty arises from the lack of a comparison principle for the chemotaxis system, together with the time-periodic nature of the shifting environment. To overcome these difficulties, we reformulate the original system into an equivalent system and develop new analytical tools for studying the resulting problem.

The rest of this paper is organized as follows. In Section 2, we establish several preliminary lemmas that will be used in the proofs of our main results. In Section 3, we investigate the vanishing and spreading properties of solutions to system \eqref{Keller-Segel-eq0} and prove Theorems \ref{compact-support} and \ref{right-support}. In Section 4, we study the existence and nonexistence of forced wave solutions and prove Theorems \ref{forced-wave} and \ref{nonexistence-forced-wave}.
\section{Preliminaries}
In this section, we present some preliminary lemmas to be used in the proofs of the main theorems in later sections.

Observe that if $\hat r(t, \xi)=r(t, \xi+ct-s(t))$, then 
\begin{align*}
\hat r(t+\omega, \xi)&=r(t+\omega, \xi+c(t+\omega)-s(t+\omega))\cr
&=r(t, \xi+c(t+\omega)-s(t+\omega))\cr
&=r(t, \xi+ct-s(t))\cr
&=\hat r(t, \xi),
\end{align*}
where we used the $\omega$-periodicity of $r(t, x)$ in $t$ and the identity $s(t+\omega)=s(t)+c\omega$. Moreover, $\hat r(t, \xi)$ also satisfies assumption {\bf (H1)}. Since $\hat r(t, x-ct)=r(t, x-s(t))$, the term $r(t, x-s(t))$ in the first equation of  \eqref{Keller-Segel-eq0} can be equivalently rewritten as $\hat r(t,x-ct)$. For notational simplicity, we continue to denote $\hat r$ by $r$. Thus, \eqref{Keller-Segel-eq0} can be written as
\begin{equation}\label{Keller-Segel-eq0-1}
\begin{cases}
u_t=u_{xx}-\chi( u v_{x})_{x}+u(r(t, x-ct)-bu),\quad x\in\R\cr
0 =v_{xx}-  \lambda v +\mu u,\quad x\in\R.
\end{cases}
\end{equation}

From the second equation in \eqref{Keller-Segel-eq0-1}, we see that
$$
v_{xx}=\lambda v-\mu u,
$$
and hence, the first equation in \eqref{Keller-Segel-eq0-1} can be written as
$$
u_t=u_{xx}-\chi v_{x} u_{x}+ u\big( r(t, x-ct)-\chi \lambda v-(b-\chi\mu)u\big),\quad x\in\R.
$$
By the comparison principle, it follows that if $b>\chi\mu$, then for any $u_0\in C_{\rm unif}^b (\R)$ with $u_0\ge 0$,
$$
0\le u(t,x;u_0)\le { \max}\{\|u_0\|_\infty, \frac{\bar r}{b-\chi\mu}\}\quad \forall\,\ t\ge 0,\,\, x\in\R.
$$
%where $\bar r:=\max_{t\in [0,\omega]}r(t,\infty)$.
%Then by a priori estimates for parabolic equations, $v(t,x;u_0)$ and $v_x(t,x;u_0)$ are globally H\"older continuous in $t\ge \delta_0$ and $x\in\R$ for any $\delta_0>0$.

%{\color{blue} 
By the arguments similar to those for  \cite[Lemma 2.5]{SaShXu}, we have
the following result.%}

\begin{lem}
\label{new-lm2}
Assume $b>\chi\mu$. Then
for every $R\gg 1$, there are $C_R\gg 1$ and $\varepsilon_R>0$ such that for any $u_0\in C_{\rm unif}^b(\R)$ with $u_0\ge 0$, any
$x_0\in \R$, and any $t\ge 0$,  we have
 \begin{equation}\label{nnew-eq3}
\|\chi v_x(t,\cdot;u_0)\|_{L^{\infty}(B_{\frac{R}{2}}(x_0))} + \|\chi\lambda v(t,\cdot;u_0)\|_{L^{\infty}(B_{\frac{R}{2}}(x_0))}\leq C_{R}\|u(t,\cdot;u_0)\|_{L^{\infty}(B_{R}(x_0))}+\varepsilon_R M
\end{equation}
with $\lim_{R\to\infty}\varepsilon_R=0$, where $M:=\max\{\|u_0\|_{\infty},\frac{\bar r}{b-\chi\mu}\}$.
\end{lem}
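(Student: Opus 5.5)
The plan is to work from the explicit Green's function representation of the elliptic equation. Since $0=v_{xx}-\lambda v+\mu u$ on $\R$ with bounded $u$, the unique bounded solution is
$$
v(t,x;u_0)=\frac{\mu}{2\sqrt{\lambda}}\int_{\R}e^{-\sqrt{\lambda}|x-y|}u(t,y;u_0)\,dy ,
$$
and differentiating gives
$$
v_x(t,x;u_0)=-\frac{\mu}{2}\int_{\R}{\rm sgn}(x-y)\,e^{-\sqrt{\lambda}|x-y|}u(t,y;u_0)\,dy .
$$
Both kernels are bounded by a constant multiple of $e^{-\sqrt\lambda|x-y|}$. Hence
$$
|\chi v_x(t,x)|+|\chi\lambda v(t,x)|\le K\int_{\R}e^{-\sqrt{\lambda}|x-y|}u(t,y)\,dy,
\qquad K:=\frac{\chi\mu}{2}\big(1+\sqrt{\lambda}\big).
$$

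Fix $x_0$ and $x\in B_{R/2}(x_0)$, and split the integral into $|y-x_0|<R$ and $|y-x_0|\ge R$.

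\textbf{Near part.} On $|y-x_0|<R$ we bound $u$ by $\|u(t,\cdot;u_0)\|_{L^\infty(B_R(x_0))}$. The remaining kernel integral satisfies $\int_\R e^{-\sqrt\lambda|x-y|}dy=2/\sqrt\lambda$, so this part contributes at most $\frac{2K}{\sqrt\lambda}\|u(t,\cdot;u_0)\|_{L^\infty(B_R(x_0))}$. This is actually independent of $R$, so any $C_R\ge 2K/\sqrt\lambda$ works. Since the statement asks for $C_R\gg1$, we may simply enlarge it.

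\textbf{Far part.} If $|y-x_0|\ge R$ and $|x-x_0|<R/2$, then $|x-y|\ge R/2$. For $u$ we use the a priori bound derived just before the lemma (valid because $b>\chi\mu$):
$$
0\le u\le M=\max\Big\{\|u_0\|_\infty,\frac{\bar r}{b-\chi\mu}\Big\}.
$$
The far part is therefore at most
$$
KM\int_{|z|\ge R/2}e^{-\sqrt\lambda|z|}\,dz=\frac{2K}{\sqrt\lambda}\,e^{-\sqrt\lambda R/2}\,M .
$$
So we set $\varepsilon_R:=\frac{2K}{\sqrt\lambda}e^{-\sqrt\lambda R/2}$, which tends to $0$ as $R\to\infty$.

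Taking the supremum over $x\in B_{R/2}(x_0)$ gives \eqref{nnew-eq3}. The constants depend only on $\chi,\mu,\lambda,R$, so the bound is uniform in $x_0$, $t$ and $u_0$.

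The only point needing care is justifying that $v$ is given by this convolution. This is the unique bounded solution of the elliptic equation, and it is the one used in Proposition \ref{existence-uniqueness}, as in \cite{SaSh1}. Differentiation under the integral is legitimate because the kernel is Lipschitz away from the diagonal and integrable. Beyond that there is no real obstacle: the argument is a direct kernel estimate.
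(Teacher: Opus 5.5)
Your proof is correct and follows essentially the argument the paper relies on: the paper gives no proof of its own and defers to \cite[Lemma 2.5]{SaShXu}, whose proof splits the integral representation of $v$ into the contribution of $u$ on $B_R(x_0)$, bounded by a constant times $\|u(t,\cdot;u_0)\|_{L^\infty(B_R(x_0))}$, and the contribution from the complement, bounded by $M$ times a kernel tail that vanishes as $R\to\infty$. Your version is the one-dimensional specialization using the explicit kernel $e^{-\sqrt{\lambda}|x-y|}$ of Lemma~\ref{new-lm1}, which even gives an $R$-independent $C_R$ and $\varepsilon_R=O(e^{-\sqrt{\lambda}R/2})$; the left side of \eqref{nnew-eq3} is a sum of two suprema rather than a supremum of a pointwise sum, but your bound on $\int_{\R}e^{-\sqrt{\lambda}|x-y|}u(t,y)\,dy$ is uniform in $x\in B_{R/2}(x_0)$, so treating the two kernels separately gives the same conclusion (at worst doubling the constants).
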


%\begin{proof} It follows from the arguments of  \cite[Lemma 2.5]{SaShXu}.
%	{\color{red} revise?,   write a sentence in front of Lemma 2.1}
%\end{proof}

%{

%\begin{lem}
%\label{new-lm6}
%Suppose that $f: (0, \infty)\times\R\mapsto\R$ is a bounded vector field with $\||f|\|_{L^{\infty}(0,\infty)\times\R}\le A<\infty$.
%let $\Gamma(t,s,x,y)$ be the fundamental solution to
%\begin{equation}\label{nnew-eq01}
%\begin{cases}
%\Gamma_{t}+f\cdot\nabla_{x}\Gamma=\Delta_{x}\Gamma,\quad x\in\R, \, t>s\cr
%\Gamma(t=s,s,x,y)=\delta_{y}(x)
%\end{cases}
%\end{equation}
%for any $t>s\ge 0$ and $y\in\R$. Then
%$$
%\Gamma(t,s,x,y)=\frac{1}{\sqrt{4\pi(t-s)}}+\frac{A}{2}
%$$
%for all $t>s\ge 0$ and $(x,y)\in\R\times\R$, and
%$$
%\Gamma(t,s,x,y)=(\frac{1}{\sqrt{4\pi(t-s)}}+\frac{A\sqrt{t-s}}{\sqrt{4\pi}(|x-y|-A(t-s))})e^{-\frac{(|x-y|-A(t-s))^2}{4(t-s)}}
%$$
%if $|x-y|>A(t-s)$. Further, if $|x-y|>A(t-s)$ then
%$$
%\frac{e^{-\frac{(|x-y|-A(t-s))^2}{4(t-s)}}}{(16\pi(t-s))^{\frac{1}{2}}}\leq \Gamma(t,s,x,y).
%$$
%\begin{proof}
%This is \cite[Lemma 4.2]{FhCh}
%\end{proof}
%\end{lem}
%}

By slightly modifying the arguments of \cite[Lemma 2.2]{ShXu1}, we can
further prove the  following result.
\begin{lem}
\label{new-lm3}
Assume $b>\chi\mu$.  Then for every $p>1$, $t_0>0$, $s_0\geq 0$, $R>0$, and  $u_0\in C_{\rm unif}^b(\R)$ with $u_0\ge 0$, there is $C_{t_0,s_0,R,M,p}$ such that if $s\in[0,s_0], \ t\geq t_0,  \ |x-y|\leq R$, then
\begin{equation}\label{nnew-eq4}
u(t,x;u_0)\le C_{t_0,s_0,R,M,p}[u(t+s,y;u_0)]^{\frac{1}{p}}(M+1),
\end{equation}
where   $M:=\max\{\|u_0\|_{\infty},\frac{\bar r}{b-\chi\mu}\}$.
\end{lem}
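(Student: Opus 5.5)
We prove Lemma~\ref{new-lm3} with a parabolic Harnack inequality applied to the non-divergence form of the first equation. By the computation preceding Lemma~\ref{new-lm2}, $u=u(\cdot,\cdot;u_0)$ solves the linear equation
$$
u_t=u_{xx}+a(t,x)u_x+c_0(t,x)u,\qquad a:=-\chi v_x,\quad c_0:=r(t,x-ct)-\chi\lambda v-(b-\chi\mu)u .
$$
The key point is that the coefficients $a$ and $c_0$ are bounded in terms of $M$ alone. Indeed $0\le u\le M$ for all $t\ge0$. Writing $v=\mu(\lambda-\partial_{xx})^{-1}u$ through the explicit kernel $\frac{1}{2\sqrt\lambda}e^{-\sqrt\lambda|x-y|}$ gives
$$
\|v\|_\infty\le \frac{\mu}{\lambda}M,\qquad \|v_x\|_\infty\le \frac{\mu}{\sqrt\lambda}M .
$$
Hence $|a|+|c_0|\le K(M+1)$, where $K$ depends only on $\chi,\mu,\lambda,b$ and $\sup|r|$.

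\textbf{Step 1 (Harnack on unit cells).} Apply the Krylov--Safonov parabolic Harnack inequality for nonnegative solutions of non-divergence equations with bounded drift and potential. For $t\ge t_0$, use cylinders of time length $\min\{t_0,1\}$ and spatial radius $1$. This gives $u(t,x)\le C\,u(t+\tau,y)$ for $|x-y|\le 1$ and $\tau\in[\tau_0,2\tau_0]$. Here $C$ depends on $t_0,M$ through the coefficient bound $K(M+1)$.

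\textbf{Step 2 (chaining).} Chain Step 1 finitely many times to cover $|x-y|\le R$ and time shifts $s\in[0,s_0]$. The number of steps depends only on $R,s_0,t_0$.

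\textbf{Step 3 (the case $s=0$ and small $s$).} The standard Harnack inequality needs a strictly positive time lag, so $s=0$ and small $s$ are not covered by Steps 1--2. This is exactly why the statement carries a power $1/p$ and a factor $(M+1)$. To treat it, I would follow \cite[Lemma 2.2]{ShXu1}, which uses the interior $L^p$/H\"older estimate in the following way.

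First, a local maximum principle (the $L^p$ version of De Giorgi--Nash--Moser, or Krylov--Safonov for subsolutions) bounds $u(t,x)$ by a constant times
$$
\Big(\int\!\!\int_{Q^-}u^p\Big)^{1/p}
$$
over a backward cylinder $Q^-$ ending before time $t$. Second, the weak Harnack inequality bounds that integral by $u(t+s,y)$ evaluated later. When the time gap $s$ is too small, I would instead compare with an earlier base time $t-\delta$, using $t\ge t_0$. The remaining comparison between $u(t,\cdot)$ and $u(t+s,\cdot)$ for small $s$ is handled by the inequality
$$
u\le M^{1-1/p}u^{1/p}\le (M+1)\,u^{1/p},
$$
combined with H\"older continuity in time, whose bound is uniform in $M$ by the Schauder estimates. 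This interpolation is what produces the factor $(M+1)$ and the exponent $1/p$.

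The main obstacle is Step 3: obtaining a bound that is uniform as $s\to0$, where the classical Harnack inequality degenerates. I expect that the precise bookkeeping of how the constant depends on $M$ is what forces the weaker exponent $1/p$.
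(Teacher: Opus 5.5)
Your Steps 1--2 are fine, but they only give the estimate for time lags $s\ge s_1>0$. The real content of the lemma is the small-lag case, in particular $s=0$, and Step 3 does not establish it.

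\textbf{Why Step 3 fails.} The local maximum principle bounds $u(t,x)$ by an $L^p$ average over a cylinder whose top is at time $t$. The weak Harnack inequality, however, only controls integrals over cylinders lying strictly before the time at which the infimum is taken. So for $s=0$ there is no room to combine them. Your fallback of going back to $t-\delta$ gives at best
\[
u(t,x)\le u(t-\delta,x)+C\delta^{\alpha}\le C_\delta\,u(t+s,y)+C\delta^{\alpha}.
\]
This is an \emph{additive} error that does not vanish as $\eta:=u(t+s,y)\to0$. Choosing $\delta=\delta(\eta)$ does not rescue it: the Harnack constant over distance $R$ with lag $\delta$ grows like $e^{C/\delta}$, while $\delta^{\alpha}$ decays only polynomially. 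The inequality $u\le M^{1-1/p}u^{1/p}$ is just $u\le M$ at a single point and links nothing between $(t,x)$ and $(t+s,y)$.

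\textbf{Why the exponent $1/p$ is forced.} It is not a matter of bookkeeping in $M$. For the heat equation with initial data $1$ on $[L,\infty)$ and $0$ elsewhere, $u(t,R)/u(t,0)\to\infty$ as $L\to\infty$, so no same-time linear Harnack inequality holds. Moreover, for solutions defined only in a fixed bounded cylinder, even with $0\le u\le M$, the power inequality fails for $p$ close to $1$, because of influx through the lateral boundary. So the estimate cannot come from interior Krylov--Safonov or De Giorgi estimates on fixed cylinders with non-sharp constants. It needs the whole line, the global bound $u\le M$, and sharp Gaussian constants.

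\textbf{The missing idea.} The paper gives no proof of its own and defers to Lemma~2.2 of \cite{ShXu1}. What is needed is a global representation. Let $\Gamma$ be the fundamental solution of $w_t=w_{xx}-\chi v_xw_x$. The diffusion is exactly $\partial_{xx}$ and $|\chi v_x|\le A:=\chi\mu M/\sqrt{\lambda}$. Hence $\Gamma$ has two-sided Gaussian bounds with the sharp exponent, the drift only shifting the centre:
\[
\frac{1}{\sqrt{16\pi\tau}}\,e^{-\frac{(|x-z|+A\tau)^2}{4\tau}}\le\Gamma(\sigma+\tau,x;\sigma,z)\le C_{\tau,A}\,e^{-\frac{(|x-z|-A\tau)_+^2}{4\tau}}.
\]
Absorb the zeroth-order coefficient $|c_0|\le K$ into factors $e^{\pm K(\cdot)}$. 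Take $\tau=t_0/2$, $\sigma=t-\tau$, $\Gamma_1=\Gamma(t,x;\sigma,\cdot)$ and $\Gamma_2=\Gamma(t+s,y;\sigma,\cdot)$. Comparison then gives
\[
u(t,x)\le e^{K\tau}\int_\R\Gamma_1u(\sigma,\cdot)\quad\text{and}\quad u(t+s,y)\ge e^{-K(\tau+s)}\int_\R\Gamma_2u(\sigma,\cdot).
\]
Write $\Gamma_1u=(\Gamma_2u)^{1/p}\cdot\Gamma_1\Gamma_2^{-1/p}u^{1-1/p}$ and apply H\"older with exponents $p$ and $p'=p/(p-1)$:
\[
u(t,x)\le e^{K\tau}\Big(\int_\R\Gamma_2u(\sigma,\cdot)\Big)^{1/p}M^{1-1/p}\Big(\int_\R\Gamma_1^{p'}\Gamma_2^{-p'/p}\,dz\Big)^{1/p'}.
\]
The last integral is bounded uniformly for $|x-y|\le R$ and $s\in[0,s_0]$. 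Indeed, the coefficient of $|z|^2$ in the exponent is
\[
\frac{p'}{4}\Big(\frac{1}{p(\tau+s)}-\frac{1}{\tau}\Big)\le-\frac{p'}{4\tau}\Big(1-\frac1p\Big)<0.
\]
This yields $u(t,x)\le C\,u(t+s,y)^{1/p}M^{1-1/p}\le C(M+1)\,u(t+s,y)^{1/p}$ for all $s\in[0,s_0]$ at once. The case $s=0$ is harmless and is exactly where $p>1$ is used. No local Harnack inequality or chaining is needed.
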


%\begin{proof}
%It  can be proved by slightly  modifying  the arguments  of \cite[Lemma 2.2]{ShXu1}. 	{\color{red} revise?,   write a sentence in front of Lemma 2.2}
%\end{proof}

By \eqref{nnew-eq3} and \eqref{nnew-eq4} with $p>1$, $s_0=0$ and $t_0=1$, we have
\begin{align}
\label{E2-1}
\left|\chi v_x(t,x;u_0)\right|+\chi \lambda v(t,x;u_0)&\le %C_{R,p} \big(u(t,x;u_0)\big)^{\frac{1}{p}}+\varepsilon_R M\nonumber\\
%&= 
C_{R,p} \big(u(t,x;u_0)\big)^{\frac{1}{p}}+\varepsilon_R M \quad \forall\,\, t\ge 1,\,\, x\in\R,
\end{align}
where  $C_{R,p}=C_R\cdot  C_{1,0,R,M,p}\cdot (M+1) (>0)$.

\smallskip

For every $u\in C(\R, C^b_{\rm unif}(\R))$, let
\begin{equation}\label{psi-definition}
\Psi(t, x;u)=\mu\int_{0}^{\infty}\int_{\R}\frac{e^{-\lambda s}e^{-\frac{|y-x|^2}{4s}}}{\sqrt{4\pi s}}u(t, y)dyds.
\end{equation}
It is not difficult to prove that $\Psi(\cdot, \cdot; u)$ solves the elliptic equation
$$
\partial_{xx}\Psi(t, x;u)-\lambda\Psi(t, x;u)+\mu u(t,x)=0.
$$

\begin{lem}
\label{new-lm1}
{\sc (\cite[lemma 2.1]{SaShXu})} There hold
\begin{equation}\label{psi-definition-eq2}
\Psi(t, x;u)=\frac{\mu}{2\sqrt{\lambda}}\int_{\R}e^{-\sqrt{\lambda}|x-y|}u(t, y)dy
\end{equation}
and
\begin{equation}\label{space-derivative-of-psi-1}
\frac{\partial}{\partial x}\Psi(t, x;u)=-\frac{\mu}{2}e^{-\sqrt{\lambda }x}\int_{-\infty}^xe^{\sqrt{\lambda}y}u(t, y)dy + \frac{\mu}{2}e^{\sqrt{\lambda}x}\int_{x}^{\infty}e^{-\sqrt{\lambda}y}u(t, y)dy.
\end{equation}
\end{lem}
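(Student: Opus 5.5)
To prove Lemma \ref{new-lm1} I would start from the definition \eqref{psi-definition} and use Fubini--Tonelli to exchange the $s$- and $y$-integrals. This is legitimate because the integrand is nonnegative up to the sign of $u$, and $u(t,\cdot)$ is bounded. After the exchange,
\[
\Psi(t,x;u)=\mu\int_{\R}\Big(\int_0^\infty \frac{e^{-\lambda s}e^{-\frac{|x-y|^2}{4s}}}{\sqrt{4\pi s}}\,ds\Big)u(t,y)\,dy .
\]
So the first identity reduces to the classical formula
\[
\int_0^\infty \frac{e^{-\lambda s-\frac{a^2}{4s}}}{\sqrt{4\pi s}}\,ds=\frac{1}{2\sqrt\lambda}e^{-\sqrt\lambda|a|},\qquad a\in\R .
\]

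To prove this kernel identity, I would take the Fourier transform in $a$. The heat kernel $\frac{1}{\sqrt{4\pi s}}e^{-a^2/(4s)}$ has transform $e^{-s\xi^2}$, so the left side has transform $\int_0^\infty e^{-(\lambda+\xi^2)s}\,ds=\frac{1}{\lambda+\xi^2}$. This is exactly the transform of $\frac{1}{2\sqrt\lambda}e^{-\sqrt\lambda|a|}$, and both functions are in $L^1$, so they coincide. An alternative route is the substitution $s=\sigma^2$ together with the standard integral $\int_0^\infty e^{-\alpha\sigma^2-\beta/\sigma^2}\,d\sigma=\frac12\sqrt{\pi/\alpha}\,e^{-2\sqrt{\alpha\beta}}$, with $\alpha=\lambda$ and $\beta=a^2/4$. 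Either way this gives \eqref{psi-definition-eq2}.

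For the derivative formula \eqref{space-derivative-of-psi-1}, I would split the integral in \eqref{psi-definition-eq2} at $y=x$:
\[
\Psi=\frac{\mu}{2\sqrt\lambda}\Big(e^{-\sqrt\lambda x}\int_{-\infty}^x e^{\sqrt\lambda y}u\,dy+e^{\sqrt\lambda x}\int_x^\infty e^{-\sqrt\lambda y}u\,dy\Big).
\]
Both integrals converge absolutely because $u(t,\cdot)$ is bounded, and since $u(t,\cdot)$ is continuous they are $C^1$ in $x$. Differentiating with the product rule and the fundamental theorem of calculus, the boundary terms $\pm\frac{\mu}{2\sqrt\lambda}u(t,x)$ cancel. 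Only the terms coming from differentiating the exponential prefactors survive, which gives exactly \eqref{space-derivative-of-psi-1}.

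The only genuine step is the kernel identity; everything else is bookkeeping. Differentiating once more would also confirm that $\partial_{xx}\Psi-\lambda\Psi+\mu u=0$, consistent with the remark preceding the lemma.
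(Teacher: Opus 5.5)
Your proof is correct and complete. The paper does not prove this lemma itself; it imports it from \cite[Lemma 2.1]{SaShXu}, and your argument supplies a self-contained derivation in which all constants check out. The kernel identity $\int_0^\infty (4\pi s)^{-1/2}e^{-\lambda s-a^2/(4s)}\,ds=\frac{1}{2\sqrt\lambda}e^{-\sqrt\lambda|a|}$ holds by either of your two verifications. Differentiating the split integral produces the terms $\pm\frac{\mu}{2\sqrt\lambda}u(t,x)$, which cancel and leave exactly \eqref{space-derivative-of-psi-1}.

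Two small points of wording. First, the Fubini exchange is justified by absolute integrability rather than by a sign condition: $\int_0^\infty\!\int_\R (4\pi s)^{-1/2}e^{-\lambda s-|x-y|^2/(4s)}\,dy\,ds=1/\lambda$, together with $|u(t,y)|\le\|u(t,\cdot)\|_\infty$. Second, equal Fourier transforms give equality of the two kernels only almost everywhere. You upgrade this to every $a$ by continuity of both sides; for the left side this follows from dominated convergence.

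An alternative route is hinted at by the remark before the lemma. Your differentiation, carried one step further, shows that the explicit convolution $\Phi(t,x):=\frac{\mu}{2\sqrt\lambda}\int_\R e^{-\sqrt\lambda|x-y|}u(t,y)\,dy$ is a bounded solution of $\Phi_{xx}-\lambda\Phi+\mu u=0$. If $\Psi$ also solves this equation, then $\Psi-\Phi$ is a bounded solution of $w_{xx}=\lambda w$ and hence zero. That route, however, presupposes the fact that \eqref{psi-definition} solves the elliptic equation, which the paper asserts but does not prove. Yours works directly from the definition, so it is the more self-contained of the two.
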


By a straightforward computation, we have the following estimate.
\begin{lem}\label{V-estimat}
Suppose that $b>\chi\mu$. For every $u\in C^b_{\rm unif}(\R)$, $0\leq u(x)\leq \frac{\bar r}{b-\chi\mu}$, it holds that
$$
\Psi(t, x;u)\leq \frac{\mu \bar r}{\lambda(b-\chi\mu)} \quad \forall\,\ t\in\R, \,\ x\in\R
\quad {\rm and}\quad
\p_{x}\Psi(t,x;u)\leq \frac{\mu \bar r}{2\sqrt{\lambda}(b-\chi\mu)}  \quad \forall\,\ t\in\R, \,\ x\in\R.
$$
\end{lem}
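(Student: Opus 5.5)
The plan is to use the explicit representation of $\Psi$ from Lemma \ref{new-lm1} and to bound $u$ by its pointwise upper bound. Since $\Psi$ depends linearly on $u$ and the kernels are nonnegative, it suffices to insert $0\le u(t,y)\le \frac{\bar r}{b-\chi\mu}$ and compute two elementary exponential integrals.

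For the first estimate, \eqref{psi-definition-eq2} together with the positivity of the kernel $e^{-\sqrt\lambda|x-y|}$ gives
$$
\Psi(t,x;u)\le \frac{\mu}{2\sqrt\lambda}\cdot\frac{\bar r}{b-\chi\mu}\int_{\R}e^{-\sqrt\lambda|x-y|}\,dy .
$$
The integral equals $\frac{2}{\sqrt\lambda}$, so the right-hand side is $\frac{\mu\bar r}{\lambda(b-\chi\mu)}$, which is the first bound.

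For the derivative, I use \eqref{space-derivative-of-psi-1}. The first term, $-\frac{\mu}{2}e^{-\sqrt\lambda x}\int_{-\infty}^x e^{\sqrt\lambda y}u\,dy$, is nonpositive because $u\ge0$, so it can simply be dropped for an upper bound. The second term is bounded by
$$
\frac{\mu}{2}\,\frac{\bar r}{b-\chi\mu}\,e^{\sqrt\lambda x}\int_x^\infty e^{-\sqrt\lambda y}\,dy .
$$
Since $e^{\sqrt\lambda x}\int_x^\infty e^{-\sqrt\lambda y}\,dy=\frac{1}{\sqrt\lambda}$, this gives $\p_x\Psi(t,x;u)\le \frac{\mu\bar r}{2\sqrt\lambda(b-\chi\mu)}$. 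The same argument applied to the first term gives the symmetric lower bound, so in fact $|\p_x\Psi|$ obeys the same bound.

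There is no real obstacle; the only point requiring care is to discard the negative term rather than bound it in absolute value. Bounding both terms in absolute value would lose a factor of $2$ in the derivative estimate.
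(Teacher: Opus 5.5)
Your proposal is correct and is exactly the ``straightforward computation'' the paper invokes without writing out: insert the pointwise bound on $u$ into the kernel representations of Lemma~\ref{new-lm1}, drop the nonpositive term in $\partial_x\Psi$, and evaluate the two exponential integrals. Your remark that the same computation gives the two-sided bound $|\partial_x\Psi|\le \frac{\mu\bar r}{2\sqrt{\lambda}(b-\chi\mu)}$ is also correct, and that two-sided form is what the paper later uses in the compactness estimates of the forced-wave proof.
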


Let 
\begin{equation}\label{psi-eq}
\psi(t)=e^{\int_{0}^{t}(r(s,\infty)-r^{*})ds}.
\end{equation}
It follows that $\psi(t)$ is a positive $\omega$-periodic function and satisfies 
$$
\psi^{'}=(r(t,\infty)-r^{*})\psi.
$$

The following result  can be proved by the arguments similar to those for \cite[Lemma 2.2]{SaShXu}.

\begin{lem}\label{lem-001}
For every $u\in C(\R, C^b_{\rm unif}(\R))$, $u(x)\ge 0$, it holds that
\begin{equation}\label{estimate-on-space-derivative-1}
\left| \p_{x}\Psi(t, x;u)\right|\leq \sqrt{\lambda}\Psi(t, x;u),\ \quad  \forall\ t\in\R, \,\, x\in\R.
\end{equation}
In particular, for every solution $(u(t,x;u_0),v(t,x;u_0))$ of \eqref{Keller-Segel-eq0} with $u_0\geq 0$ it holds that
\begin{equation*}
|v_{x}(t,x;u_0)|\leq \sqrt{\lambda}v(t,x;u_0),\quad \forall\ x\in\R,\,\ t>0.
\end{equation*}
Furthermore,  if
\begin{equation}
\frac{(\kappa-\sqrt{\lambda})_{+}}{(\kappa +\sqrt{\lambda})}\leq \frac{2(b-\chi\mu)}{\chi\mu},
\end{equation} it holds that
\begin{equation}\label{estimate-on-space-derivative-2}
\chi\kappa\Psi_x(t,x;u)-\chi\lambda\Psi(t,x;u)-(b-\chi\mu)Me^{-\kappa x}\psi(t)\leq 0, \quad \forall\,\ x\in\R, \,\ t\geq 0
\end{equation}
whenever $0\leq u(t, x)\leq Me^{-\kappa x}\psi(t)$ for some positive real numbers $\kappa>0$ and $M>0$.
\end{lem}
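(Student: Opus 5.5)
The plan is to work with the explicit kernel representation from Lemma \ref{new-lm1} and treat the three claims in turn.

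\textbf{Gradient bound.} By \eqref{space-derivative-of-psi-1},
$$
|\p_x\Psi(t,x;u)|\le \frac{\mu}{2}\Big(e^{-\sqrt\lambda x}\int_{-\infty}^x e^{\sqrt\lambda y}u\,dy+e^{\sqrt\lambda x}\int_x^\infty e^{-\sqrt\lambda y}u\,dy\Big),
$$
because $u\ge 0$ makes both terms nonnegative. The right-hand side is exactly $\frac{\mu}{2}\int_\R e^{-\sqrt\lambda|x-y|}u\,dy=\sqrt\lambda\,\Psi(t,x;u)$ by \eqref{psi-definition-eq2}. This gives \eqref{estimate-on-space-derivative-1}. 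For the solution of \eqref{Keller-Segel-eq0}, note that $v(t,\cdot)$ is the unique bounded solution of $v_{xx}-\lambda v+\mu u=0$. Hence $v(t,x;u_0)=\Psi(t,x;u(\cdot;u_0))$, and the bound on $v_x$ follows.

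\textbf{Setting up \eqref{estimate-on-space-derivative-2}.} Fix $t$ and write $N=M\psi(t)$, so $0\le u\le Ne^{-\kappa x}$. Split $\Psi_x=-A+B$, where
$$
A=\frac{\mu}{2}e^{-\sqrt\lambda x}\int_{-\infty}^x e^{\sqrt\lambda y}u\,dy\ge 0,\qquad
B=\frac{\mu}{2}e^{\sqrt\lambda x}\int_x^\infty e^{-\sqrt\lambda y}u\,dy\ge 0 .
$$
Then $\lambda\Psi=\sqrt\lambda(A+B)$. The quantity to bound becomes
$$
\chi\kappa\Psi_x-\chi\lambda\Psi=-\chi(\kappa+\sqrt\lambda)A+\chi(\kappa-\sqrt\lambda)B .
$$
If $\kappa\le\sqrt\lambda$, both terms are $\le 0$ and we are done. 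If $\kappa>\sqrt\lambda$, drop the $A$ term. Using $u\le Ne^{-\kappa y}$ gives
$$
B\le \frac{\mu}{2}e^{\sqrt\lambda x}N\int_x^\infty e^{-(\sqrt\lambda+\kappa)y}dy=\frac{\mu N e^{-\kappa x}}{2(\kappa+\sqrt\lambda)} .
$$
Therefore
$$
\chi\kappa\Psi_x-\chi\lambda\Psi\le \frac{\chi\mu}{2}\,\frac{\kappa-\sqrt\lambda}{\kappa+\sqrt\lambda}\,Ne^{-\kappa x}\le (b-\chi\mu)Ne^{-\kappa x}.
$$
The last inequality is exactly the hypothesis $\frac{(\kappa-\sqrt\lambda)_+}{\kappa+\sqrt\lambda}\le\frac{2(b-\chi\mu)}{\chi\mu}$. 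Since $Ne^{-\kappa x}=Me^{-\kappa x}\psi(t)$, this is \eqref{estimate-on-space-derivative-2}.

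\textbf{Main obstacle.} The only genuine step is the sign bookkeeping: spotting that $-A$ appears with a favorable coefficient and can be discarded. Everything else is elementary integration. The presence of $\psi(t)$ is harmless, since it is a positive constant at each fixed $t$.
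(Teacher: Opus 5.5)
Your argument is correct and is essentially the one the paper has in mind: the paper gives no proof of its own but defers to \cite[Lemma 2.2]{SaShXu}, whose argument is this same splitting of $\Psi_x$ into the left and right kernel integrals. As in your proposal, the left integral (with coefficient $-(\kappa+\sqrt{\lambda})$) is discarded, the right one is bounded by $\frac{\mu}{2(\kappa+\sqrt{\lambda})}Me^{-\kappa x}\psi(t)$, and $\psi(t)$ enters only as a positive constant at each fixed $t$. The one step you leave implicit is that in the case $\kappa\le\sqrt{\lambda}$ you need $b-\chi\mu\ge 0$ to drop the last term, but this follows from the hypothesis itself, since its left-hand side is nonnegative.
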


%\begin{proof}
%It can be proved by similar arguments used in \cite[Lemma 2.2]{SaShXu}.
%	{\color{red} revise?,   write a sentence in front of this lemma}
%\end{proof}

\begin{lem}
\label{lem-001-1}
 Assume that $b>\chi\mu$. Let $c_{\kappa}=\frac{\kappa^2+r^*}{\kappa}$ with $ 0<\kappa\leq \sqrt{r^*}$ satisfying
$$\frac{(\kappa-\sqrt{\lambda})_{+}}{(\kappa +\sqrt{\lambda})}\leq \frac{2(b-\chi\mu)}{\chi\mu}.$$ Then the  following statements are valid:
\begin{enumerate}
\item[(i)]
 For any $u_0\ge 0$ with  nonempty compact support
 and  any $M\gg \frac{\bar r}{b-\chi\mu}$  satisfying
\begin{equation}\label{u_0-cond1}
\max\{u_0(x),u_0(-x)\}\leq \min\{M,Me^{-\kappa x}\},\quad \forall\ x\in\R,
\end{equation}
there holds
\begin{equation}
\label{new-eq1}
u(t,x;u_0)\leq M e^{-\kappa(|x|-c_{\kappa}t)}\max_{t\in[0,\omega]}\psi(t), \quad\forall\ x\in\R,\ t\geq 0.
\end{equation}

\item[(ii)]
 For any $u_0\in C^b_{\rm unif}(\R)$, $u_0(x)\geq 0 $,   and  any  $M\gg \frac{\bar r}{b-\chi\mu}$  satisfying
\begin{equation}\label{u_0-cond3}
u_0(x)\leq \min\{M,Me^{\kappa x}\},\quad \forall\ x\in\R,
\end{equation}
there holds
\begin{equation}
\label{new-eq1-3}
u(t,x;u_0)\leq M e^{\kappa(x+c_{\kappa}t)}\max_{t\in[0,\omega]}\psi(t), \quad\forall\ x\in\R,\ t\geq 0.
\end{equation}
\end{enumerate}
\end{lem}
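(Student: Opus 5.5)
We will prove both parts by comparison, using the rewritten form of the $u$-equation from Section 2,
$$
u_t=u_{xx}-\chi v_x u_x+u\big(r(t,x-ct)-\chi\lambda v-(b-\chi\mu)u\big).
$$
We treat $v=\Psi(t,x;u)$ as a given coefficient, so the equation is a scalar linear-in-structure parabolic equation for $u$ and the scalar comparison principle applies to it. The candidate supersolution for (i) is
$$
\bar u(t,x)=Me^{-\kappa(x-c_\kappa t)}\psi(t),
$$
and for (ii) it is $\bar u(t,x)=Me^{\kappa(x+c_\kappa t)}\psi(t)$. The factor $\psi$ absorbs the time dependence of $r(t,\infty)$, because $\psi'=(r(t,\infty)-r^*)\psi$, and the identity $\kappa c_\kappa=\kappa^2+r^*$ cancels the diffusion term.

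\textbf{Step 1: reduction to a pointwise inequality.} Assume first that $u\le \bar u$, so that Lemma \ref{lem-001} applies with the bound $Me^{-\kappa x}e^{\kappa c_\kappa t}\psi(t)$. Since $M$ absorbs the factor $e^{\kappa c_\kappa t}$ at each fixed $t$, the lemma gives
$$
\chi\kappa v_x-\chi\lambda v\le (b-\chi\mu)\bar u .
$$
For the exponential $\bar u$ we have $\bar u_x=-\kappa\bar u$, and therefore
$$
\bar u_t-\bar u_{xx}+\chi v_x\bar u_x-\bar u\big(r-\chi\lambda v-(b-\chi\mu)\bar u\big)
=\bar u\Big[\kappa c_\kappa+r(t,\infty)-r^*-\kappa^2-\chi\kappa v_x+\chi\lambda v-r(t,x-ct)+(b-\chi\mu)\bar u\Big].
$$
The quantity $\kappa c_\kappa-\kappa^2-r^*$ vanishes. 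The term $r(t,\infty)-r(t,x-ct)$ is nonnegative by the monotonicity in {\bf (H1)}. Finally, $-\chi\kappa v_x+\chi\lambda v+(b-\chi\mu)\bar u\ge 0$ by the inequality from Lemma \ref{lem-001}. Hence $\bar u$ is a supersolution.

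\textbf{Step 2: circularity.} The estimate in Step 1 uses $u\le\bar u$, which is what we are trying to prove, so we argue by continuation or contradiction. Two approaches are available.
\begin{itemize}
\item First approach: replace $\bar u$ by $\bar u_\delta=(1+\delta)\bar u$ and let $t^*$ be the first time at which $u$ touches $\bar u_\delta$. Up to $t^*$ we have $u\le \bar u_\delta$, so Lemma \ref{lem-001} applies with $M(1+\delta)$ and makes $\bar u_\delta$ a strict supersolution on $[0,t^*]$, which contradicts touching.
\item Alternative, used if the first approach runs into technical trouble: the standard fixed-point/comparison scheme of \cite{SaShXu}. For fixed $T$, take $\mathcal E=\{w:0\le w\le \min\{\bar M,\bar u\}\}$, solve the scalar equation with $v=\Psi(\cdot;w)$, use comparison to show the solution stays in $\mathcal E$, apply Schauder to obtain a fixed point, and conclude by uniqueness of the chemotaxis solution.
\end{itemize}
In either approach we also need $u\le M$. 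This follows from $M\gg \frac{\bar r}{b-\chi\mu}$ and the global bound after the rewriting, since $v\ge0$ and $b>\chi\mu$.

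\textbf{Step 3: loss of $\max\psi$ and the remaining cases.} The initial comparison needs $u_0\le \bar u(0,\cdot)=Me^{-\kappa x}\psi(0)$. Here $\psi(0)=1$, so the hypothesis $u_0(x)\le Me^{-\kappa x}$ suffices. Since $\psi(t)\le\max_{[0,\omega]}\psi$, we obtain $u\le Me^{-\kappa(x-c_\kappa t)}\max\psi$; for $x<0$ the resulting bound is at least $M$, and there $u\le M$ is trivial. This gives \eqref{new-eq1} for $x\ge0$.
\begin{itemize}
\item For $x\le0$ we use the reflected supersolution $Me^{\kappa(x+c_\kappa t)}\psi(t)$. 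The sign check is the mirror image of Step 1, with $\bar u_x=\kappa\bar u$ and the inequality of Lemma \ref{lem-001} applied to the reflected profile, since the lemma is symmetric under $x\mapsto -x$.
\item The comparison $r(t,x-ct)\le r(t,\infty)$ still holds, so monotonicity is only used in the form $r\le r(t,\infty)$ and no directional issue arises.
\item Part (ii) is exactly this reflected computation, using hypothesis \eqref{u_0-cond3}.
\end{itemize}

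The main obstacle is Step 2: rigorously closing the circular use of Lemma \ref{lem-001}. The lemma needs a pointwise bound on the full $u(t,\cdot)$ on all of $\R$, while touching arguments on unbounded domains require care at infinity. For that reason I would rely on the Schauder-based construction of \cite{SaShXu}, adapted by inserting $\psi(t)$.
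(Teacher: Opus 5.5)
Your proposal is correct and, once you commit to the Schauder-based route, it is essentially the paper's proof. The paper passes to the frame moving with speed $c_\kappa$ and takes $\mathcal{E}^T=\{u:\ u(0,\cdot)=u_0,\ 0\le u\le \min\{Me^{-\kappa x}\psi(t),M\}\}$. It then checks your two supersolution inequalities: the exponential via Lemma~\ref{lem-001} applied to the frozen $u\in\mathcal{E}^T$, and the constant $M$ via $M\ge \frac{\bar r}{b-\chi\mu}$. Finally it applies Schauder, identifies the fixed point with $u(t,x;u_0)$ by uniqueness, and treats $x<0$ by the reflected argument. Working in the moving frame only avoids your device of letting the constant in Lemma~\ref{lem-001} depend on $t$. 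Your first, $(1+\delta)\bar u$ touching, variant is not needed; as you suspected, it would require extra control as $x\to+\infty$ to guarantee a touching point.
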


\begin{proof}
We shall only prove $(i)$ since $(ii)$ can be proved by the similar arguments.

Let $U^{+}(t,x)=\min\{Me^{-\kappa x}\psi(t), M\}$. For any given  $T>0$, let
$$
\mathcal{E}^T:=\{u\in C([0,T], C^b_{\rm unif}(\R)) : u(0,\cdot)=u_0(\cdot)\ \text{and}\ 0\leq u(t,x)\leq {U}^{+}(t, x)\ \forall\ x\in\R, \ 0\leq t\leq T\}.
$$
For every $u\in\mathcal{E}^T$, let $\Phi(t,x;u)$ denote the solution of
\begin{equation*}
\begin{cases}
\Phi_t=\mathcal{A}_u(\Phi), \ x\in\R,\,\ 0<t\leq T,\cr
\Phi(0,x; u)=u_0(x),\quad x\in\R.
\end{cases}
\end{equation*}
where $$\mathcal{A}_u(\Phi):=\Phi_{xx}+(c_\kappa-\chi\Psi_x(\cdot,\cdot;u))\Phi_x+(r(t, x+c_{\kappa}t-ct)-\chi\lambda\Psi(\cdot,\cdot;u)-(b-\chi\mu)\Phi)\Phi$$
and $\Psi(t, x;u)$ is the solution of
$$
\Psi_{xx}-\lambda \Psi+\mu u=0,\quad x\in\R, \,\ t\geq 0.
$$
Observe that
\begin{align*}\label{w-eq0}
&\mathcal{A}_u(Me^{-\kappa x}\psi)-(Me^{-\kappa x}\psi)_{t}\cr
%&=\left(\kappa^2-\kappa c_{\kappa}+\chi\kappa\Psi_x(t,x;u)+r(t,x+c_{\kappa}t-ct)-\chi\lambda\Psi(t,x;u)-(b-\chi\mu)Me^{-\kappa x}\psi-\frac{\psi^{'}}{\psi}\right)Me^{-\kappa x}\psi\cr
&\leq \left(\kappa^2-\kappa c_{\kappa}+\chi\kappa\Psi_x(t,x;u)+r(t, \infty)-\chi\lambda\Psi(t,x;u)-(b-\chi\mu)Me^{-\kappa x}\psi-\frac{\psi^{'}}{\psi}\right)Me^{-\kappa x}\psi\cr
&= \left(\chi\kappa\Psi_x(t,x;u)-\chi\lambda\Psi(t,x;u)-(b-\chi\mu)Me^{-\kappa x}\psi\right)Me^{-\kappa x}\psi\cr
%&\leq \left(\chi\kappa\Psi_x(t,x;u)-\chi\lambda\Psi(t,x;u)-(b-\chi\mu)Me^{-\kappa x}\min_{t\in [0,\omega]}\psi(t)\right)Me^{-\kappa x}\psi.\cr
\end{align*}
It then follows from Lemma \ref{lem-001} that
$$
\mathcal{A}_u(Me^{-\kappa x}\psi)\leq 0.
$$
Since $u_0(x)\leq Me^{-\kappa x}$ for every $x\in\R$, the comparison principle implies that
$$
\Phi(t,x;u)\leq Me^{-\kappa x}\psi(t), \quad \forall\ x\in\R,\,\ 0\leq t\leq T.
$$
On the other hand, since $M\geq \frac{\bar r}{b-\chi\mu}$, we have that
$$
\mathcal{A}_u(M)=(r(t, x+c_{\kappa}t-ct)-\chi\lambda \Psi(t,x;u)-(b-\chi\mu)M)M\leq 0.
$$
Since $u_0(x)\leq M$ for every $x\in\R$, by comparison principle again, we conclude that
$$
\Phi(t,x;u)\leq M, \quad \forall\ x\in\R, \,\ 0\leq t\leq T.
$$
Therefore, we obtain
$$
0\leq \Phi(t,x;u)\leq U^+(t, x),\quad\forall\ x\in\R,\ 0\leq t\leq T,\ \ u\in\mathcal{E}^T.
$$

By the arguments similar to those for  \cite[Theorem 3.1 ]{SaSh2}, it can be shown that the function  $\Phi : \mathcal{E}^T\ni u \mapsto \Phi(\cdot,\cdot;                                                                                                                                                                          u)\in \mathcal{E}^T $ is continuous and compact in the compact open topology. Hence, by the Schauder fixed point theorem, there is $u^*\in \mathcal{E}^T$ such that $\Phi(u^*)=u^*$. Let $v^{*}(t, x)=\Psi(t, x; u^{*})$. Note that $(u^*(t,x- c_{\kappa}t),v^*(t,x- c_{\kappa}t))$ is a solution of \eqref{Keller-Segel-eq0-1} with $u^*(0,x)=u_0(x)$ for every $x\in\R$. Hence, by uniqueness of the solution to \eqref{Keller-Segel-eq0-1}, we conclude that
$$
u^*(t,x-c_\kappa t)=u(t,x;u_0),\quad \forall\ x\in\R, \ 0\leq t\leq T.
$$
Hence $u(t,x;u_0)\in \mathcal{E}^T$. Since $T$ was arbitrary chosen, we obtain that
$$
u(t,x;u_0)\leq U^{+}(t, x-c_{\kappa}t)\leq M e^{-\kappa(x-c_{\kappa}t)}\max_{t\in[0,\omega]}\psi(t), \quad\forall\ x\in\R,\ t\geq 0.
$$
Similar arguments as in the above yield that
$$
u(t,-x;u_0)\leq U^{+}(t, x-c_{\kappa}t)\leq M e^{-\kappa(x-c_{\kappa}t)}\max_{t\in[0,\omega]}\psi(t), \quad\forall\ x\in\R,\ t\geq 0.
$$
It then follows that 
$$
u(t,x;u_0)\leq M e^{-\kappa(|x|-c_{\kappa}t)}\max_{t\in[0,\omega]}\psi(t), \quad\forall\ x\in\R,\ t\geq 0.
$$
This completes the proof.
\end{proof}

%Consider
%\begin{equation}\label{Le2.6-eq1}
%\begin{cases}
%u_t=u_{xx}-\chi(uv_x)_x +u(r^*-bu),\quad x\in\R\cr
%0=v_{xx}- \lambda v+\mu u,\quad x\in\R.
%\end{cases}
%\end{equation}

%\begin{lem}
%\label{lem-001-4}
%Assume that $b>2\chi\mu$. Then for any $u_0\in C_{\rm unif}^b(\R)$ with $\inf_{x\in\R}u_0(x)>0$,
%$$
%\lim_{t\to\infty} \|u(t,\cdot;u_0)-\frac{r^*}{b}\|_\infty=0,
%$$
%where $(u(t,x;u_0),v(t,x;u_0))$ is the solution of \eqref{Le2.6-eq1} with $u(0,x;u_0)=u_0(x)$.
%\end{lem}

%\begin{proof}
%It follows from \cite[Theorem 1.8]{SaSh1}.
%\end{proof}

\section{Persistence and spreading speeds}

In this section, we study the spreading properties of \eqref{Keller-Segel-eq0} and prove Theorems \ref{compact-support} and \ref{right-support}.  Throughout this section, we assume that {\bf (H1)} and {\bf (H2)} hold.  We start with two lemmas.

For any given $\bar c$, let $u(t,x)=\tilde u(t,x-\bar c t)$ and $v(t,x)=\tilde v(t,x-\bar c t)$. Then \eqref{Keller-Segel-eq0-1} becomes
\begin{equation}\label{new-Keller-Segel-eq}
\begin{cases}
\tilde u_t=\tilde u_{xx}+\bar c \tilde u_x-\chi (\tilde  u \tilde  v_{x})_{x}+\tilde u(r(t, x-(c-\bar c)t)-b\tilde u),\quad x\in\R\cr
0 =\tilde v_{xx}-  \lambda \tilde v +\mu \tilde u,\quad x\in\R.
\end{cases}
\end{equation}
In the following,  $(u(t,x;u_0),v(t,x;u_0))$ denotes the solution of \eqref{Keller-Segel-eq0-1} (also the solution of \eqref{Keller-Segel-eq0}) with
$u(0,x;u_0)=u_0(x)$, and $(\tilde u(t,x;u_0),\tilde v(t,x;u_0))$ denotes the solution of \eqref{new-Keller-Segel-eq} with
$\tilde u(0,x;u_0)=u_0(x)$.

For any given $0<\epsilon<2\sqrt{r^*}$, let 

\begin{equation}
\label{l-eq}
l=\frac{2\pi}{(\epsilon\sqrt{r^*})^{\frac{1}{2}}}
\end{equation}
%\begin{equation}
%\label{bar-r-eq}
%4\bar r -\bar c^2\ge \epsilon \sqrt{r^*}\quad \forall\,\,   -2\sqrt{r^*}+\epsilon \le  \bar c  \le  2\sqrt {r^*}-\epsilon.
%\end{equation}
% Let
%\begin{equation}
%\label{l-eq}
%l=\frac{2\pi}{(\epsilon\sqrt{r^*})^{\frac{1}{2}}}
%\end{equation}
and
\begin{equation}
\label{lambda-eq}
\lambda(\bar c)=\frac{4r^{*}-\bar c^2-\frac{\pi^2}{l^2}}{4}.
\end{equation}
It follows that  $\lambda(\bar c)>  \frac{7\epsilon\sqrt {r^*}}{16}>0$ for any $ -2\sqrt{r^*}+\epsilon  \le \bar c \le   2\sqrt {r^*}-\epsilon$. Further, a straightforward computation gives rise to the following result.

\begin{lem}
\label{lem-001-2}
Given $0<\epsilon<2\sqrt{r^*}$, let $l$ be as in \eqref{l-eq}. Then
for any $ -2\sqrt{r^*}+\epsilon \leq \bar c \leq  2\sqrt {r^*}-\epsilon$, $\lambda(\bar c)$, which is defined as in \eqref{lambda-eq}, is the principal eigenvalue of
\begin{equation}
\label{ev-eq1}
\begin{cases}
-\phi_{t}+\phi_{xx}+\bar c\phi_x+r(t, \infty) \phi=\lambda \phi,\quad -l<x<l\cr
\phi(-l)=\phi(l)=0,
\end{cases}
\end{equation}
and $\phi(t, x;\bar c)=\psi(t)e^{-\frac{\bar c}{2}x}{ \cos\frac{\pi}{2l}x}$ is a corresponding positive eigenfunction where $\psi(t)$ is defined as in \eqref{psi-eq}.
\end{lem}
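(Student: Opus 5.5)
The plan is to verify directly that $\phi(t,x;\bar c)=\psi(t)e^{-\frac{\bar c}{2}x}\cos\frac{\pi}{2l}x$ solves \eqref{ev-eq1} with $\lambda=\lambda(\bar c)$. I would then invoke the uniqueness of a positive periodic eigenfunction for time-periodic parabolic eigenvalue problems (Krein--Rutman type) to conclude that $\lambda(\bar c)$ is the principal eigenvalue.

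\emph{Step 1 (time part).} By \eqref{psi-eq}, $\psi$ is positive, $\omega$-periodic and satisfies $\psi'=(r(t,\infty)-r^*)\psi$. Hence $-\phi_t=-(r(t,\infty)-r^*)\phi$, and the time-dependent coefficient $r(t,\infty)$ in \eqref{ev-eq1} cancels exactly, leaving the constant $r^*$.

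\emph{Step 2 (space part).} Set $g(x)=e^{-ax}\cos(kx)$ with $a=\frac{\bar c}{2}$ and $k=\frac{\pi}{2l}$. Then
$$g'=-ag-ke^{-ax}\sin(kx),\qquad g''=(a^2-k^2)g+2ake^{-ax}\sin(kx).$$
Since $\bar c=2a$, the sine terms cancel in $g''+\bar c g'$, giving $g''+\bar c g'=-(a^2+k^2)g=-\big(\frac{\bar c^2}{4}+\frac{\pi^2}{4l^2}\big)g$. Combining with Step 1,
$$-\phi_t+\phi_{xx}+\bar c\phi_x+r(t,\infty)\phi=\Big(r^*-\frac{\bar c^2}{4}-\frac{\pi^2}{4l^2}\Big)\phi=\lambda(\bar c)\phi.$$
The boundary conditions hold because $\cos(\pm\frac{\pi}{2})=0$. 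Moreover, $\phi>0$ on $(-l,l)$ and $\phi$ is $\omega$-periodic in $t$.

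\emph{Step 3 (principality).} For the $\omega$-periodic Dirichlet problem on $(-l,l)$, the principal eigenvalue is the unique eigenvalue admitting a positive periodic eigenfunction; this follows from Krein--Rutman applied to the strongly positive, compact period map. Since $\phi$ is such an eigenfunction, $\lambda(\bar c)$ is the principal eigenvalue.

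The bound $\lambda(\bar c)>\frac{7\epsilon\sqrt{r^*}}{16}$ stated before the lemma also follows directly. By \eqref{l-eq}, $\pi^2/l^2=\epsilon\sqrt{r^*}/4$. Since $|\bar c|\le 2\sqrt{r^*}-\epsilon$, we have $\bar c^2\le 4r^*-4\epsilon\sqrt{r^*}+\epsilon^2$, and $\epsilon<2\sqrt{r^*}$ gives $\epsilon^2<2\epsilon\sqrt{r^*}$. Therefore $4\lambda(\bar c)>(4-2-\tfrac14)\epsilon\sqrt{r^*}=\tfrac74\epsilon\sqrt{r^*}$.

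The only non-computational point is Step 3, which is standard; everything else is the routine calculation above.
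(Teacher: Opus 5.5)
Your proposal is correct and takes the paper's route. The paper just says the lemma follows from a straightforward computation, and your Steps 1--2 are that computation: the factor $\psi$ removes the time dependence of $r(t,\infty)$, and the weight $e^{-\frac{\bar c}{2}x}$ cancels the drift. Your Step 3 spells out the standard periodic-parabolic Krein--Rutman fact, left implicit in the paper, that a positive periodic eigenfunction identifies the principal eigenvalue, and your check of $\lambda(\bar c)>\frac{7\epsilon\sqrt{r^*}}{16}$ is also correct.
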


Let $\lambda(\bar c, R)$ be the principle eigenvalue of \eqref{ev-eq1} with $r(t,\infty)$ being replaced by $r(t,\infty)-2\varepsilon_R M$. Then we have

$$
\lambda(\bar c, R)=\lambda(\bar c)-2\varepsilon_R M,
$$
and

$$
\lim_{R\to\infty} \lambda(\bar c, R)=\lambda(\bar c)
$$

Consider
\begin{equation}
\label{aux-new-eq01}
\begin{cases}
u_t=u_{xx}+\bar c u_x-A(t,x)u_x+u (r(t,\infty) -2 \varepsilon_R M-C_{R,p} u^{\frac{1}{p}}-(\bar b-\chi\mu)u), \quad -l<x<l\cr
u(t,-l)=u(t,l)=0,
\end{cases}
\end{equation}
where $R\gg 1$ is such that $ \lambda(\bar c, R)>0$ for any $ -2\sqrt{r^*}+\epsilon  \le \bar c \le   2\sqrt {r^*}-\epsilon$,  and 
$A(t,x)$ is globally H\"older continuous in $t\in\R$ and
$x\in [-l,l]$ with H\"older exponent $0<\alpha<1$ and $\|A(\cdot,\cdot)\|_\infty<\infty$.

\begin{lem}
\label{lem-001-3} For given $0<\epsilon<2\sqrt{r^*}$, let $l$ be as in \eqref{l-eq}.
There is $\eta>0$ such that for any $A(\cdot,\cdot)$ with $\|A(\cdot,\cdot)\|_\infty<\eta$,  any $\bar c\in {[ -2\sqrt{r^*}+\epsilon} ,2\sqrt{r^*}-\epsilon { ]}$, and
any $\bar b \in (\chi\mu,\infty)$,
\eqref{aux-new-eq01} has a positive bounded entire solution
$u(t,x;\bar c,\bar b, A)$ satisfying that
 \begin{equation}
 \label{aux-new-new-eq0}
 \inf_{-l+\delta\le x\le l-\delta,t\in\R, \bar c\in {[ -2\sqrt{r^*}+\epsilon}, 2\sqrt{r^*}-\epsilon { ]}} u(t,x;\bar c,\bar b,A)>0\quad \forall \,\, 0<\delta <l.
 \end{equation}
\end{lem}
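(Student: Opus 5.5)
We will build the entire solution by constructing a periodic sub- and supersolution pair for the scalar problem \eqref{aux-new-eq01}. Since $A$ and the nonlinearity are fixed functions of $(t,x)$, this is a scalar parabolic equation with a comparison principle. The natural subsolution is $\underline u=\delta_0\phi(t,x;\bar c)=\delta_0\psi(t)e^{-\frac{\bar c}{2}x}\cos\frac{\pi}{2l}x$, with $\phi$ the eigenfunction from Lemma \ref{lem-001-2}. The supersolution is a large constant $\bar u=K$ with $K\ge \bar r/(\bar b-\chi\mu)$. For the constant we compute $\bar u_{xx}+(\bar c-A)\bar u_x+\bar u(r(t,\infty)-2\varepsilon_RM-C_{R,p}K^{1/p}-(\bar b-\chi\mu)K)\le 0$, and $K\ge 0$ on the boundary. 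So $K$ is a supersolution, uniformly in $\bar c$, $\bar b$ and $A$.

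For the subsolution, plugging in $\underline u$ and using Lemma \ref{lem-001-2} with $r(t,\infty)$ replaced by $r(t,\infty)-2\varepsilon_RM$ gives
\[
\underline u_t-\underline u_{xx}-(\bar c-A)\underline u_x-\underline u(\cdots)=\underline u\Big(-\lambda(\bar c,R)+C_{R,p}\underline u^{1/p}+(\bar b-\chi\mu)\underline u\Big)+A\,\underline u_x .
\]
The first group is $\le -\frac12\lambda(\bar c,R)\underline u$ once $\delta_0$ is small, uniformly in $\bar c$ on the compact interval. Here $\bar b$ enters only through $(\bar b-\chi\mu)\delta_0$, so $\delta_0$ may depend on $\bar b$ (the infimum in \eqref{aux-new-new-eq0} is only over $x,t,\bar c$).

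The main obstacle is the drift term $A\underline u_x$, since $\underline u_x$ does not vanish at $x=\pm l$ while $\underline u$ does. So $|A\underline u_x|\le \eta|\underline u_x|$ cannot be absorbed by $\lambda\underline u$ near the boundary. I would handle this by shrinking the domain: use $\cos\frac{\pi}{2l'}x$ on $(-l',l')$ with $l'<l$ slightly smaller, but still large enough that the eigenvalue $\frac{4r^*-\bar c^2-\pi^2/l'^2}{4}-2\varepsilon_RM$ stays positive. Extend $\underline u$ by $0$ outside; it is the maximum of $0$ and a function that is a subsolution where positive, hence a generalized subsolution. The boundary trouble persists at $\pm l'$, so I would instead take the eigenfunction of the perturbed operator. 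Choose $\phi$ so that $-\phi_t+\phi_{xx}+(\bar c-A)\phi_x+(r-2\varepsilon_RM)\phi=\lambda_A\phi$ with periodic-in-$t$ structure. Alternatively, note that for $A$ not periodic one uses the principal Lyapunov exponent, and continuity in $A$ gives $\lambda_A\ge\frac12\lambda(\bar c,R)$ when $\|A\|_\infty<\eta$. A cleaner concrete route is to use the modified profile $\underline u=\delta_0\psi(t)e^{-\frac{\bar c}{2}x}\big(\cos\frac{\pi}{2l}x\big)^{m}$ with $m>1$ slightly. Then $|\underline u_x|\le C(\underline u+\underline u^{(m-1)/m}\cdot)$, which still fails at the boundary. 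So the definitive choice is a Harnack-type argument: work on $[-l+\delta,l-\delta]$ interior estimates, and use the eigenvalue continuity with respect to bounded drift (standard perturbation of principal spectrum, uniform in $\bar c$ by compactness).

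With ordered sub/super solutions, I would then solve \eqref{aux-new-eq01} from time $-n$ with initial datum $K$. By comparison the solutions decrease in $n$ and stay $\ge\underline u$, since $\underline u$ is a subsolution on all of $\R$ in time. Parabolic Schauder estimates, using the Hölder continuity of $A$, give compactness. The monotone limit as $n\to\infty$ is a bounded entire solution $u(t,x;\bar c,\bar b,A)$ with $\underline u\le u\le K$. The lower bound $\underline u\ge\delta_0\min\psi\, e^{-\sqrt{r^*}l}\cos\frac{\pi(l-\delta)}{2l}$ on $[-l+\delta,l-\delta]$ is uniform in $t$ and $\bar c$, which yields \eqref{aux-new-new-eq0}.
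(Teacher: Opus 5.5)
Your proposal has a genuine gap at exactly the point you flagged. You correctly observe that $\delta_0\phi$ is not a subsolution near $x=\pm l$. There $\underline u\to0$, while $|A\underline u_x|$ can be of size $\eta\,\delta_0|\phi_x(t,\pm l;\bar c)|>0$, which $\frac12\lambda(\bar c,R)\underline u$ cannot absorb. None of the alternatives you list is carried out, yet your last paragraph still uses ``$\underline u$ is a subsolution on all of $\R$ in time'' together with the explicit lower bound coming from $\delta_0\phi$. So the uniform lower bound \eqref{aux-new-new-eq0}, which is the whole content of the lemma, is not established. The decreasing iteration from $K$ does give some bounded entire solution, but a priori it could be $\equiv0$. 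The fallbacks you mention do not close the gap:
\begin{itemize}
\item Since $A$ is not periodic, the perturbed operator has no periodic eigenfunction.
\item A positive principal Lyapunov exponent controls growth only on average over long time intervals, so $\delta_0$ times a normalized principal solution is not a pointwise subsolution either.
\item Interior Harnack estimates on $[-l+\delta,l-\delta]$ say nothing about the boundary layer, which is where the difficulty lives.
\end{itemize}

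The paper avoids pointwise-in-time subsolutions altogether. By compactness and continuous dependence, it shows that the solution of the linear problem \eqref{aux-new-eq02} started from $\phi(0,\cdot;\bar c)$ at time $n\omega$ is $C^1([-l,l])$-close to $e^{\lambda(\bar c,R,\bar\varepsilon)(t-n\omega)}\phi(t-n\omega,\cdot;\bar c)$ over one period, uniformly in $n$ and $\bar c$, as $\|A\|_\infty\to0$. Closeness in $C^1$ rather than $C^0$, combined with $\phi_x(0,\pm l;\bar c)\neq0$, gives $\tilde u(n\omega+\omega,\cdot)\ge e^{\lambda\omega/2}\phi(0,\cdot;\bar c)$ on all of $[-l,l]$, including near the boundary. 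Taking $\sigma$ small keeps the nonlinear terms below $\bar\varepsilon$ for one period, so $\sigma\phi(0,\cdot;\bar c)$ is a sub-iterate of the period map, and an increasing iteration from $-n\omega$ produces the entire solution.

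If you want to keep your continuous-time scheme, you can repair it by replacing $\cos\frac{\pi}{2l}x$ with the eigenfunction of the worst-case drift operator,
\[
g(x)=e^{-\eta|x|/2}\big(\cos kx+\tfrac{\eta}{2k}\sin k|x|\big),
\]
where $k\in(\frac{\pi}{2l},\frac{\pi}{l})$ is chosen so that $g(\pm l)=0$. This $g$ has the following properties:
\begin{itemize}
\item $g\in C^2[-l,l]$, $g>0$ on $(-l,l)$, and $xg'(x)<0$ for $x\neq0$.
\item $g''-\eta|g'|=-(k^2+\frac{\eta^2}{4})g$, hence $g''-ag'\ge-(k^2+\frac{\eta^2}{4})g$ whenever $|a|\le\eta$.
\item $k\to\frac{\pi}{2l}$ as $\eta\to0$.
\end{itemize}
For $\underline u=\delta_0\psi(t)e^{-\frac{\bar c}{2}x}g(x)$ this yields
\[
\underline u_{xx}+(\bar c-A)\underline u_x+\big(r(t,\infty)-2\varepsilon_RM\big)\underline u-\underline u_t\ge\Big(\lambda(\bar c,R)+\frac{\pi^2}{4l^2}-k^2-\frac{\eta^2}{4}-\frac{\eta|\bar c|}{2}\Big)\underline u .
\]
For $\eta$ small the bracket is at least $\frac12\min_{\bar c}\lambda(\bar c,R)$, uniformly in $\bar c$. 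Choosing $\delta_0$ small, depending on $\bar b$, to absorb $C_{R,p}\underline u^{1/p}+(\bar b-\chi\mu)\underline u$ then gives a genuine subsolution vanishing at $\pm l$. Its infimum over $[-l+\delta,l-\delta]$ is bounded below uniformly in $t$ and $\bar c$, so together with your decreasing iteration from $K$ this would yield a shorter, pointwise proof than the paper's.
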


\begin{proof}

First of all, there is $\bar\varepsilon>0$ such that
$$
\lambda(\bar c, R)-\bar\varepsilon>0.%\quad \forall\,\, R\ge R^*, \,\, 0<\bar\varepsilon\leq \bar\varepsilon^{*}.
$$
%Let $R\ge R^*$ and $0<\bar\varepsilon\leq \bar\varepsilon^{*}$ be fixed, 
Define $\lambda(\bar c, R, \bar \varepsilon):=\lambda(\bar c, R)-\bar\varepsilon$, consider
\begin{equation}
\label{aux-new-eq02}
\begin{cases}
\tilde u_t=\tilde u_{xx}+\bar c\tilde  u_x-A(t,x)\tilde u_x+(r(t, \infty)-2\varepsilon_R M-\bar\varepsilon) \tilde u , \quad  -l<x<l\cr
\tilde u(t,-l)=\tilde u(t,l)=0.
\end{cases}
\end{equation}
For any given $n\in\Z$, let $\tilde u(t,x;n\omega,\phi(0, \cdot;\bar c),A)$ solve \eqref{aux-new-eq02} with $\tilde u(n\omega,x; n\omega,\phi(0, \cdot;\bar c),A)= \phi(0, x;\bar c)$. We claim that
\begin{equation}
\label{aux-new-new-eq2}
\lim_{\|A(\cdot,\cdot)\|_{\infty}\to 0} \|\tilde u(t,\cdot;n\omega,\phi(0, \cdot;\bar c),A)-e^{\lambda(\bar c, R, \bar \varepsilon)(t-n\omega)}\phi(t-n\omega, \cdot;\bar c)\|_{C^1([-l,l])}=0
\end{equation}
uniformly in $t\in [n\omega, n\omega+\omega]$, $n\in\Z$, and  $\bar c\in {[ -2\sqrt{r^*}+\epsilon}, 2\sqrt{r^*}-\epsilon { ]}$.

Indeed, let $\tilde{\tilde u}(t, x; 0, \phi(0, \cdot;\bar c), A, n)=\tilde u(t+n\omega,x;n\omega,\phi(0, \cdot;\bar c),A)$ for $t\geq 0$, $-l\leq x\leq l$.  Then $\tilde{\tilde u}$ satisfies
\begin{equation}
\label{aux-new-eq02-double-tilde}
\begin{cases}
\tilde{\tilde u}_t=\tilde{\tilde u}_{xx}+\bar c\tilde{\tilde u}_x-A(t+n\omega,x)\tilde {\tilde u}_x+(r(t, \infty)-2\varepsilon_R M-\bar\varepsilon) \tilde{\tilde u} , \quad  t>0,\,\  -l<x<l\cr
\tilde{\tilde u}(t,-l)=\tilde{\tilde u}(t,l)=0, \quad t>0\cr
\tilde{\tilde u}(0, x)=\phi(0, \cdot;\bar c), \quad -l<x<l.
\end{cases}
\end{equation}
To prove \eqref{aux-new-new-eq2}, it is sufficient to prove
\begin{equation}
\label{aux-new-new-eq2-1}
\lim_{\|A(\cdot,\cdot)\|_{\infty}\to 0} \|\tilde{\tilde u}(t,\cdot;0,\phi(0, \cdot;\bar c),A, n)-e^{\lambda(\bar c, R, \bar \varepsilon)t}\phi(t, \cdot;\bar c)\|_{C^1([-l,l])}=0.
\end{equation}
uniformly in $t\in [0, \omega]$, $n\in\Z$, and  $\bar c\in {[ -2\sqrt{r^*}+\epsilon} ,2\sqrt{r^*}-\epsilon { ]}$.

Assume, by contradiction, that  there exist $\epsilon_0>0$, $t_{n}\in [0,\omega]$, $m_{n}\in\Z$, $\bar c_{n}\in [-2\sqrt{r^*}+\epsilon ,2\sqrt{r^*}-\epsilon]$ and $A_{n}(\cdot,\cdot)$ with $\|A_{n}(\cdot,\cdot)\|_{\infty}\to 0$ as $n\to\infty$ such that
\begin{equation}\label{tildetilde-contra-1}
 \|\tilde{\tilde u}(t_{n},\cdot;0,\phi(0, \cdot;\bar c_{n}),A_{n}, m_{n})-e^{\lambda(\bar c_{n}, R, \bar \varepsilon)t_{n}}\phi(t_{n}, \cdot;\bar c_{n})\|_{C^1([-l,l])}\geq \epsilon_{0}.
\end{equation}
Let $u_{n}(t,x)=\tilde{\tilde u}(t+t_{n}, x; 0, \phi(0, \cdot;\bar c_{n}), A_{n}, m_{n})$. Without loss of generality, we may assume that $t_{n}\to t^*$, $\bar c_{n}\to c^*$, $A_{n}(t+t_{n}+m_{n}\omega, x)\to A^{*}(t,x)$, $u_{n}(t,x)\to u^*(t,x)$ and $\p_{x}u_{n}(t,x)\to \p_{x}u^*(t,x)$ locally uniformly in $t\in(-t^*, \infty)$ and $x\in[-l, l]$ as $n\to\infty$. Since $\lim_{n\to\infty}\|A_{n}(\cdot,\cdot)\|_{\infty}=0$, we can conclude that $A^{*}(t, x)\equiv 0$. It then follows that $u^*(t,x)$ satisfies that
$$
u^{*}_{t}=u^{*}_{xx}+c^{*}u^{*}_{x}+(r(t+t^{*}, \infty)-2\varepsilon_R M-\bar\varepsilon)u^{*}, \quad t>-t^{*},\,\ -l<x<l.
$$
Next, we claim that $\lim_{t\to-t^{*}}u^{*}(t,x)=\phi(0,x; c^{*})$ uniformly in $x\in[-l,l]$.

Indeed, letting $w_{n}(t,x)=u_{n}(t-t_{n}, x)$ for $t\geq 0$, $-l\leq x\leq l$, we see that $w_{n}(t,x)$ satisfies
\begin{equation}
\label{aux-new-eq02-w}
\begin{cases}
(w_{n})_t=(w_{n})_{xx}+\bar c_{n}(w_{n})_x-A_{n}(t+m_{n}\omega,x)(w_{n})_x\cr
\quad \quad \quad \quad +(r(t, \infty)-2\varepsilon_R M-\bar\varepsilon) w_{n} , \quad t>0,\quad -l<x<l\cr
w_{n}(t,-l)=w_{n}(t,l)=0, \quad t>0\cr
w_{n}(0, x)=\phi(0, x;\bar c_{n}), \quad -l<x<l.
\end{cases}
\end{equation}
By the continuity of solution with respect to initial function and coefficients, it follows that $w_{n}(t,x)\to w(t,x)$ as $n\to\infty$ locally uniformly in $t\in [0, \infty)$ and $x\in [-l, l]$. Moreover, $w(t, x)=u^{*}(t-t^{*}, x)$ for $t>0$. In addition, 
$$
\lim_{t\to -t^{*}}u^{*}(t,x)=\lim_{t\to 0^{+}}u^{*}(t-t^{*}, x)=\lim_{t\to 0^{+}}w(t,x)=\phi(0, x; c^{*})
$$
uniformly in $x\in[-l,l]$. Hence, $u^{*}(t,x)$ can be extended to a continuous function on $[-t^*, \infty)$. It then follows that $$u^{*}(t,x)=e^{\lambda(c^{*}, R, \bar\varepsilon)(t+t^{*})}\phi(t+t^{*}, x; c^{*}).$$ In particular, $u^{*}(0,x)=e^{\lambda(c^{*}, R, \bar\varepsilon)t^{*}}\phi(t^{*}, x; c^{*})$ and $u^{*}_{x}(0,x)=e^{\lambda(c^{*}, R, \bar\varepsilon)t^{*}}\phi_{x}(t^{*}, x; c^{*})$. But, letting $n\to\infty$ in \eqref{tildetilde-contra-1}, we have 
\begin{equation*}\label{tildetilde-contra}
 \|u^{*}(0, \cdot)-e^{\lambda(c^{*}, R, \bar \varepsilon)t^{*}}\phi(t^{*}, \cdot;\bar c^{*})\|_{C^1([-l,l])}\geq \epsilon_{0},
\end{equation*}
which is a contradiction. This proves \eqref{aux-new-new-eq2-1},
and hence,  \eqref{aux-new-new-eq2} holds true.

By \eqref{aux-new-new-eq2}, there is $\eta>0$ such that for any $A(\cdot,\cdot)$ with $\|A(\cdot,\cdot)\|_\infty<\eta$, any
 $\bar c\in {[ -2\sqrt{r^*}+\epsilon} ,2\sqrt{r^*}-\epsilon { ]}$,
\begin{equation}
\label{aux-new-new-eq3}
 \tilde u(n\omega+\omega,x;n\omega,\phi(0, \cdot;\bar c),A)\ge  e^{\frac{\lambda(\bar c, R, \bar \varepsilon)}{2}\omega}\phi(\omega, x;\bar c), \,\,\,\forall\,  -l\le x\le l,\,\, n\in\Z.
\end{equation}

 Next, suppose that $\|A(\cdot,\cdot)\|_\infty<\eta$.
 Let $u(t,x;n\omega,\sigma \phi(0, \cdot;\bar c))$ be the solution of \eqref{aux-new-eq01} with
 $u(n\omega,x;n\omega,\sigma\phi(0, \cdot;\bar c))=\sigma\phi(0, x;\bar c)$.
We now claim that
$$
\lim_{\sigma\to 0} \sup_{t\in [n\omega, n\omega+\omega], -l\le x\le l} u(t,x;n\omega,\sigma\phi(0, \cdot;\bar c))=0
$$
uniformly in $n\in\Z$ and in  $\bar c\in {[ -2\sqrt{r^*}+\epsilon} ,2\sqrt{r^*}-\epsilon { ]}$.
Indeed, let $\tilde u(t,x; 0, \sigma\phi(0, \cdot; \bar c), n, \bar c)=u(t+n\omega,x;n\omega,\sigma\phi(0, \cdot;\bar c))$ for $t\geq 0$ and $-l\leq x\leq l$. Then $\tilde u(t,x; 0, \sigma\phi(0, \cdot; \bar c), n, \bar c)$ satisfies
\begin{equation}
\label{aux-new-eq01-tilde-ini}
\begin{cases}
u_t=u_{xx}+\bar c u_x-A(t+n\omega,x)u_x\cr
\quad \quad +u (r(t,\infty) -2 \varepsilon_R M-C_{R,p} u^{\frac{1}{p}}-(\bar b-\chi\mu)u), \quad t>0, \,\, -l<x<l\cr
u(t,-l)=u(t,l)=0,\quad t>0 \cr
u(0, x)=\sigma\phi(0,x;\bar c), \quad -l<x<l.
\end{cases}
\end{equation}
To prove the claim, it suffices to show that
\begin{equation}\label{aux-transfer-1}
\lim_{\sigma\to 0} \sup_{t\in [0, \omega], -l\le x\le l} \tilde u(t,x; 0,\sigma\phi(0, \cdot;\bar c), n, \bar c)=0
\end{equation}
uniformly in $n\in\Z$ and in  $\bar c\in {[ -2\sqrt{r^*}+\epsilon} ,2\sqrt{r^*}-\epsilon { ]}$. We prove it by contradiction. If this is not true, then there are $\epsilon_{0}>0$, $m_{n}\in\Z$, $\bar c_{n}\in {[ -2\sqrt{r^*}+\epsilon} ,2\sqrt{r^*}-\epsilon { ]}$ and $\sigma_{n}\to 0$ such that 
\begin{equation}\label{tilde-u-contra}
\sup_{t\in [0, \omega], -l\le x\le l} \tilde u(t,x; 0,\sigma_{n}\phi(0, \cdot;\bar c_{n}), m_{n}, \bar c_{n})\geq \epsilon_0.
\end{equation}
Consider $\tilde u_{n}(t,x)=\tilde u(t,x; 0,\sigma_{n}\phi(0, \cdot;\bar c_{n}), m_{n}, \bar c_{n})$. Then for every $n$, $\tilde u_{n}(t,x)$ solves
\begin{equation}
\label{aux-new-eq01-tilde-ini-1}
\begin{cases}
\p_{t}\tilde u_{n}=\p_{xx}\tilde u_{n}+\bar c_{n} \p_{x}\tilde u_{n}-A(t+m_{n}\omega,x)\p_{x}\tilde u_{n}\cr
\quad\quad\quad+\tilde u_{n} (r(t,\infty) -2 \varepsilon_R M-C_{R,p} \tilde u_{n}^{\frac{1}{p}}-(\bar b-\chi\mu)\tilde u_{n}), \,\, t>0, \,-l<x<l\cr
\tilde u_{n}(t,-l)=\tilde u_{n}(t,l)=0, \quad t>0 \cr
\tilde u_{n}(0, x)=\sigma_{n}\phi(0,x;\bar c_{n}). \quad -l<x<l.
\end{cases}
\end{equation}
Without loss of generality, we may assume that $\bar c_{n}\to c^*$, $A(t+m_{n}\omega,x)\to \tilde A(t,x)$ and $\tilde u_{n}(t,x)\to \tilde u(t,x)$ uniformly in $t\in [0, \omega]$ and $x\in [-l, l]$ as $n\to\infty$. Since $\sigma_{n}\phi(0,x;\bar c_{n})\to 0$ uniformly in $x\in [-l, l]$, we then have $\tilde u(t, x)\equiv 0$ for $t\in [0, \omega]$ and $x\in [-l, l]$. But by \eqref{tilde-u-contra}, $0\geq \epsilon_{0}>0$, which is a contradiction.
Thus, there is $\sigma_0>0$ such that for any $0<\sigma\le \sigma_0$, and any $\bar c\in {[ -2\sqrt{r^*}+\epsilon} ,2\sqrt{r^*}-\epsilon { ]}$,
$$
C_{R,p}  u^{\frac{1}{p}}(t,x; n\omega,\sigma\phi(0, \cdot;\bar c))+(\bar b-\chi\mu)u (t,x;n\omega,\sigma\phi(0, \cdot;\bar c))<\bar\varepsilon, \,\,
\forall\, t\in [n\omega, n\omega+\omega],\,  x\in [-l, l],\, n\in\Z.
$$
This together with the comparison principle implies that for any $0<\sigma \le \sigma_0$ and any $\bar c\in {[ -2\sqrt{r^*}+\epsilon} ,2\sqrt{r^*}-\epsilon { ]}$,
\begin{equation}
\label{aux-new-new-eq1}
u(t,x; n\omega,\sigma \phi(0, \cdot;\bar c))\ge \sigma \tilde u(t,x;n\omega,\phi(0, \cdot;\bar c),A),\,\,\,\forall\, t\in [n\omega, n\omega+\omega],\,\, -l\le x\le l,\,\, n\in\Z.
\end{equation}
Then \eqref{aux-new-new-eq3} implies that 
\begin{equation}
\label{aux-new-new-eq3-1}
 u(n\omega+\omega,x; n\omega,\sigma \phi(0, \cdot;\bar c))  \ge \sigma  e^{\frac{\lambda(\bar c, R, \bar \varepsilon)}{2}\omega}\phi(\omega, x;\bar c),\,\,\,\forall\,  -l\le x\le l,\,\, n\in\Z
\end{equation}
for any $0<\sigma\le \sigma_0$ and $\bar c\in {[ -2\sqrt{r^*}+\epsilon} ,2\sqrt{r^*}-\epsilon { ]}$.

Replacing $n$ with $-(n+1)$ in \eqref{aux-new-new-eq3-1}, since $\phi(\omega, x;\bar c)=\phi(0, x;\bar c)$, we obtain that 
\begin{equation}
 u(-n\omega,x;-(n+1)\omega,\sigma \phi(0, \cdot;\bar c))  \ge \sigma  \phi(0, x;\bar c),\,\,\,\forall\,  -l\le x\le l,\,\ n\in\Z
\end{equation}
for any $0<\sigma\le \sigma_0$ and $\bar c\in {[ -2\sqrt{r^*}+\epsilon} ,2\sqrt{r^*}-\epsilon { ]}$.
Then the comparison principle implies that 
$$
u(t,x;-(n+1)\omega,\sigma \phi(0, \cdot;\bar c))  \ge u(t, x; -n\omega, \sigma \phi(0, x;\bar c)),\,\,\,\forall\, t\geq -n\omega, -l\le x\le l.
$$
Let $u_n(t,x)=u(t,x;-n\omega,\sigma\phi(0, \cdot;\bar c))$.
Then $\lim_{n\to \infty} u_n(t,x)$ exists and
 $u(t,x)=\lim_{n\to\infty}u_n(t,x)$ is an entire solution of  \eqref{aux-new-eq01}.

Now, we claim that
\begin{equation}
\label{u-bdd-below}
u(t,x;-n\omega,\sigma\phi(0, \cdot;\bar c))\geq \sigma\min_{t\in[0,\omega]}\phi(t, x;\bar c) \quad \forall \,\, t> -n\omega,\,\, -l<x<l.
\end{equation}
Indeed, by  \eqref{aux-new-new-eq3-1} and the comparison principle, we have
\begin{equation}
\label{aux-new-new-eq4-0}
u(k\omega,x;-n\omega,\sigma\phi(0, \cdot;\bar c))>\sigma\phi(0, x;\bar c) \quad \forall \,\, k\ge -n+1,\,\, k\in \mathbb{Z},\,\, -l<x<l
\end{equation}
Suppose, by contraction, that \eqref{u-bdd-below} does not hold. Then there are $t_1>-n\omega$, $-l<x_1<l$ such that
\begin{equation}\label{contr-1}
u(t_1,x_1;-n\omega,\sigma\phi(0, \cdot;\bar c))<\sigma\min_{t\in[0,\omega]}\phi(t, x_{1};\bar c).
\end{equation}
In view of \eqref{aux-new-new-eq4-0}, without loss of generality, we may suppose that $t_1\in(-n\omega, (-n+1)\omega)$, and $t_1=-n\omega+\tilde\delta$ for $0<\tilde\delta<\omega$. By \eqref{aux-new-new-eq1}, it follows that
\begin{equation}
\label{aux-new-new-eq1-no}
u(-n\omega+\tilde\delta,x;-n\omega,\sigma \phi(0, \cdot;\bar c))\ge \sigma \tilde u(-n\omega+\tilde\delta,x;-n\omega,\phi(0, \cdot;\bar c),A)\,\,\,\forall\, -l\le x\le l.
\end{equation}
By \eqref{aux-new-new-eq2}, we obtain 
\begin{equation}
\label{aux-new-new-eq1-no-1}
\tilde u(-n\omega+\tilde\delta,x;-n\omega,\phi(0, \cdot;\bar c),A)\geq \min_{t\in[0,\omega]}\phi(t, x;\bar c)\,\,\,\forall\, -l\le x\le l.
\end{equation}
Combining \eqref{aux-new-new-eq1-no} and \eqref{aux-new-new-eq1-no-1}, we have
$$
u(-n\omega+\tilde\delta,x;-n\omega,\sigma \phi(0, \cdot;\bar c))\ge \sigma \min_{t\in[0,\omega]}\phi(t, x;\bar c)\,\,\,\forall\, -l\le x\le l,
$$
which contradicts \eqref{contr-1}. Thus, the claim holds and $u(t,x)$ is a positive bounded entire solution of \eqref{aux-new-eq01} satisfying \eqref{aux-new-new-eq0}.
\end{proof}

Now we are ready to prove Theorem \ref{compact-support}.

\begin{proof}[Proof of Theorem \ref{compact-support}]

(1) { Suppose $c>c^*=2\sqrt{r^*}$.}
Choose $\bar c$ and  $0<\kappa\le \sqrt{r^*}$ such that
$$\frac{(\kappa-\sqrt{\lambda})_{+}}{(\kappa +\sqrt{\lambda})}\leq \frac{2(b-\chi\mu)}{\chi\mu}$$
and
$$
c^*=2\sqrt {r^*}\leq c_\kappa<\bar c<c.
$$
By Lemma \ref{lem-001-1} (i),  there holds
$$
\lim_{t\to\infty}\sup_{|x|\ge \bar ct }  u(t,x;u_0)=0.
$$

We claim that $\lim_{t\to\infty}\sup_{x\in\R}  u(t,x;u_0)=0$. For otherwise, there are $\delta_0>0$, $t_n\to\infty$ and $x_n\in (-\bar c t_n, \bar c t_n)$
such that
$$
 u(t_n,x_n;u_0)\ge \delta_0\quad \forall\, n\ge 1.
$$
Let $u_n(t,x)=u(t+t_n,x+x_n;u_0)$ and $v_n(t,x)= v(t+t_n,x+x_n;u_0)$. Note  that
  $x_n-c t_n\to  -\infty$ as $n\to\infty$.
  Let $[\frac{t_n}{\omega}]$ be the greatest integer less than or equal to $\frac{t_n}{\omega}$. Then $t_{n}-[\frac{t_n}{\omega}]\omega\in [0, \omega)$.
  Without loss of generality, we may assume that 
  $\lim_{n\to\infty}(t_{n}-[\frac{t_n}{\omega}]\omega)=t^*$ and
  there is $(u^*(t,x),v^*(t,x))$ such that
$$
\lim_{n\to\infty}(u_n(t,x),v_n(t,x))=(u^*(t,x),v^*(t,x))
$$
locally uniformly in $(t,x)\in\R\times \R$, and $(u^*(t,x),v^*(t,x))$ satisfies
\begin{equation}\label{new-Keller-Segel-eq1}
\begin{cases}
u_t^*=u^*_{xx}-\chi ( u^*v^*_{x})_{x}+u^*(r(t+t^*, -\infty)-bu^*),\quad t\in\R,\,\, x\in\R\cr
0 =v^*_{xx}-  \lambda v^* +\mu u^*,\quad t\in\R,\,\, x\in\R.
\end{cases}
\end{equation}
By the periodicity of $r(t+t^*, \infty)$ in $t$ and $\int_{0}^{\omega}r(t+t^*,-\infty)dt<0$, it follows  that $u^*(t,x)\equiv 0$, which contradicts
$$u^*(0,0)=\lim_{n\to\infty}  u(t_n,x_n;u_0)\ge \delta_0.$$
Therefore, $\lim_{t\to\infty} \sup_{x\in\R}  u(t,x;u_0)=0$.

\medskip

(2) { Suppose $-c^*\leq c<c^*=2\sqrt{r^*}$.} We first prove
  $$
\lim_{t\to\infty}\sup_{x\le (c-\varepsilon)t}u(t,x;u_0)=0.
$$
Assume that the result does not hold. Then there are constants $\delta_0>0$, $\varepsilon_0>0$, and a sequence
$\{(t_n, x_n)\}_{n\in\N}$,  $t_n\to\infty$, $x_n\in (-\infty,(c-\varepsilon_0)t_n]$
such that 
$$
u(t_n,x_n;u_0)\ge \delta_0\quad \forall\, n\ge 1.
$$
Let $u_n(t,x)=u(t+t_n,x+x_n;u_0)$ and $v_n(t,x)=v(t+t_n,x+x_n;u_0)$.    Note  that
  $x_n-c t_n\to  -\infty$ as $n\to\infty$.
Let $[\frac{t_n}{\omega}]$ be the greatest integer less than or equal to $\frac{t_n}{\omega}$. Similarly, without loss of generality, we may assume that $\lim_{t\to\infty}(t_{n}-[\frac{t_n}{\omega}]\omega)=t^*$ and there is $(u^*(t,x), v^*(t,x))$ such that
$$
\lim_{n\to\infty}(u_n(t,x),v_n(t,x))=(u^*(t,x),v^*(t,x))
$$
locally uniformly in $(t,x)\in\R\times \R$, and $(u^*(t,x),v^*(t,x))$ satisfies \eqref{new-Keller-Segel-eq1}.
Again, by the periodicity of $r(t+t^*, -\infty)$ in $t$ and $\int_{0}^{\omega}r(t+t^*,-\infty)dt<0$, it can be proved that $u^*(t,x)\equiv 0$, which contradicts to
$$u^*(0,0)=\lim_{n\to\infty} u(t_n,x_n;u_0)\ge \delta_0.$$
Therefore, $\lim_{t\to\infty} \sup_{x\le (c-\varepsilon)t} u(t,x;u_0)=0$.

Next, we prove
$$
\lim_{t\to\infty}\sup_{x\ge (c^*+\varepsilon)t}u(t,x;u_0)=0.
$$
For any $\varepsilon>0$, let $\kappa=\sqrt{r^*}$, then $c_{\kappa}=2\sqrt{r^*}<c^*+\varepsilon$.
Since $\big(1+\frac{1}{2}\frac{(\sqrt{r^*}-\sqrt{\lambda})_+}{(\sqrt{r^*}+\sqrt{\lambda})}\big)\chi\mu \leq b$ and $0<\chi\mu<b$, Lemma \ref{lem-001-1} (i)
gives rise to
$$
u(t,x;u_0)\le Me^{-\kappa(|x|-c_{\kappa}t)}\max_{t\in[0,\omega]}\psi(t), \quad\forall\ x\in\R,\ t\geq 0,
$$
and
$$
\sup_{x\ge (c^*+\varepsilon)t} u(t,x;u_0)\le Me^{-\kappa(c^*+\varepsilon-c_\kappa)t}\max_{t\in[0,\omega]}\psi(t)\to0 \quad {\rm as} \quad  t\to\infty.
$$
Therefore, $\lim_{t\to\infty}\sup_{x\ge (c^*+\varepsilon)t}u(t,x;u_0)=0$.

\smallskip

We now prove
\begin{equation}\label{Tm1.1.2liminf}
\liminf_{t\to\infty}\inf_{(c+\varepsilon)t\le x\le (c^*-\varepsilon)t} u(t,x;u_0)>0.
\end{equation}

To this end, for any $0<\varepsilon<\frac{c^*-c}{2}$, let $l$ be as in \eqref{l-eq}.
Fix a $\bar c$ satisfying ${ -c^*+\varepsilon\leq}  c+\varepsilon\le \bar c\le c^*-\varepsilon$  and set $M=\max\{\|u_0\|_{\infty},\frac{\bar r}{b-\chi\mu}\}$. {Let $\lambda(\bar c)$ be as in \eqref{lambda-eq}.} By \eqref{E2-1},  for any  $R\gg 1, { p>1} $, $(\tilde u(t,x;u_0),\tilde v(t,x;u_0))$ satisfies
\begin{equation}\label{AA-eq1}
\tilde u_t\geq \tilde u_{xx}+\bar c \tilde u_x-\chi \tilde v_x\tilde u_x+\tilde u(r(t, x-(c-\bar c)t)-\varepsilon_R M- C_{R,p} \tilde u^{\frac{1}{p}}-(b-\chi\mu)\tilde u), \quad t\geq 1,\ x\in\R.
\end{equation}
Let $p=2$ and $\eta$ be as in
Lemma \ref{lem-001-3}. Choose  $R\gg 1$  such that $
\varepsilon_{R}M<\frac{\eta}{4}$ and
\begin{equation}\label{AA-eq2-1}
|\chi \tilde v_x|\leq C_{R}\sqrt {\tilde u(t,x;u_0)}+\frac{\eta}{4}, \quad t\geq 1,\ x\in\R.
\end{equation}
Define
$$
A(t,x)=\begin{cases}
\frac{\chi \tilde v_x(1,x;u_0)}{\max\{1, |\chi \tilde v_x(1,x;u_0)|\eta^{-1}\}}, \,\, &{\rm if} \,\, t< 1, \ x\in\R \cr
\frac{\chi \tilde v_x(t,x;u_0)}{\max\{1, |\chi \tilde v_x(t,x;u_0)|\eta^{-1}\}},     \,\, &{\rm if}\,\, t\geq 1,\ x\in\R.
\end{cases}
$$
 Note that $\tilde v_x(t,x;u_0)$ is globally H\"older continuous in $t\geq 1$ and $x\in \R$.  We then see that $A(t,x)$ is globally H\"older continuous in $t\in\R$ and
$x\in \R$. It is clear that  $\|A(\cdot,\cdot)\|_\infty<\eta$.

 Let $T\gg 1$ be such that
$r(t, x-(c-\bar c)t)\ge r(t, \infty)-\varepsilon_R M$ for $x\ge -l$ and $t\ge T$.
Choose $\bar b>b$ and also $\bar b \gg 1$ such that
 \begin{equation}\label{AA-eq2-2}
 C_{R}\sqrt{\frac{\bar r}{\bar b-\chi\mu}}+\frac{\bar r}{\bar b-\chi\mu}<\frac{\eta}{4},
 \end{equation}
 and
 $u(T,x;\bar c,\bar b, A)<\frac{\bar r}{\bar b-\chi\mu}< \tilde u(T,x;u_0)$  for $-l\le x\le l$ and $\bar c\in [c+\varepsilon,c^*-\varepsilon]$, where
$u(t,x;\bar c,\bar b, A)$ is a positive bounded entire solution of \eqref{aux-new-eq01}.

 \medskip

 Fix such a $\bar b$. We first claim that for any $c+\varepsilon \le \bar c\le c^*-\varepsilon$,
\begin{equation}\label{AA-eq3}
\tilde u(t,x;u_0)\ge u(t,x;\bar c,\bar b, A)\quad \forall\,\, t\ge T,\,\, -l\le x\le l.
\end{equation}

Suppose, by contradiction, that \eqref{AA-eq3} does not hold. Then there are $c+\varepsilon \le \bar c\le c^*-\varepsilon$ and $t_{\rm inf}\in[T,\infty)$ satisfying
$$
t_{\rm inf}:=\inf\{t\in(T,\infty)\, |\,  \exists  x_t\in\R \, \,  {\rm such \, \, that }\, \, u(t, x_t; \bar c,\bar b, A)\geq\tilde u(t,x_t; u_0), |x_t|\leq l\}.
$$
Since $u(T,x;\bar c,\bar b, A)<\tilde u(T,x;u_0)$ for any $-l\le x\le l$,
it follows that $t_{\rm inf}>T$.
Note that $u(t,-l;\bar c,\bar b, A)=u(t,l;\bar c,\bar b, A)=0$ for any $t\in\R$. Then there is $x_{\rm inf}\in\R$ such that $|x_{\rm inf}|<l$,
\begin{equation}\label{AA-eq7}
\frac{\bar r}{\bar b-\chi\mu}>u(t_{\rm inf},x_{\rm inf};\bar c,\bar b, A)=\tilde u(t_{\rm inf},x_{\rm inf};u_0),
\end{equation}
and
$$
u(t,x;\bar c,\bar b, A)<\tilde u(t,x;u_0),\quad\ |x|\leq l,\quad T\le t< t_{\rm inf}.
$$
 Hence there is $0<\delta\ll 1$ such that  $[t_{\rm inf}-\delta,t_{\rm inf}]\times [x_{\rm inf}-\delta,x_{\rm inf}+\delta]\subset \{(t,y) : T\le t\leq t_{\rm inf}, |y|<l\}$ and
$$
A(t,x)=\chi \tilde v_x(t,x;u_0)
,\quad \forall\ t_{\rm inf}-\delta\le t\leq t_{\rm inf},\quad x_{\rm inf}-\delta\leq x\leq x_{\rm inf}+\delta.$$
Note that
$$
u(t_{\rm inf}-\delta,x;\bar c,\bar b, A)<\tilde u(t_{\rm inf}-\delta,x;u_0) \quad \forall\,\, x\in[x_{\rm inf}-\delta,x_{\rm inf}+\delta]
$$
and
$$
u(t,x_{\rm inf}\pm\delta;\bar c,\bar b, A)\leq \tilde u(t,x_{\rm inf}\pm\delta; u_0) \quad \forall\,\, t_{\rm inf}-\delta\leq t\leq t_{\rm inf}.
$$
Thus, by the comparison principle, we have
$$
u(t,x;\bar c,\bar b, A)<\tilde u(t,x; u_0) \quad\forall\,\, t_{\rm inf}-\delta< t\leq t_{\rm inf}, \,\, x_{\rm inf}-\delta< x< x_{\rm inf}+\delta.
$$
In particular,
$$
u(t_{\rm inf},x_{\rm inf};\bar c,\bar b, A)<\tilde u(t_{\rm inf},x_{\rm inf};u_0).
$$
Which contradicts \eqref{AA-eq7}.

\medskip

By Lemma \ref{lem-001-3}, we have
$$
\inf_{-l+\delta\le x\le l-\delta,c+\varepsilon\le \bar c\le c^*-\varepsilon, t\ge T}u(t,x;\bar c,\bar b, A)>0, \quad \forall\,\, 0<\delta <l.
$$
This, together with \eqref{AA-eq3}, implies that
$$
\inf_{-l+\delta\le x\le l-\delta,c+\varepsilon\le \bar c\leq c^*-\varepsilon, t\ge T}\tilde  u(t,x;u_0)>0,  \quad \forall \,\, 0<\delta <l,$$
and hence,
$$
\inf_{-l+\bar ct+\delta\le x\le l+\bar ct-\delta, c+\varepsilon\leq \bar c\leq c^*-\varepsilon, t\ge T} u(t,x;u_0)>0,  \quad \forall \,\, 0<\delta <l.$$
It then follows that
$$
\inf_{-l+(c+\varepsilon)t+\delta\le x\le l+(c^*-\varepsilon)t-\delta, t\ge T} u(t,x;u_0)>0,  \quad \forall \,\, 0<\delta <l.$$
Thus,
$$
\liminf_{t\to\infty}\inf_{(c+\varepsilon)t\le x\le (c^*-\varepsilon)t} u(t,x;u_0)>0.
$$

Finally,  suppose that $2\chi\mu<b$. We prove
\begin{equation}\label{tm1.1-2}
\lim_{t\to\infty} \sup_{(c+\varepsilon)t\le x\le (c^*-\varepsilon)t}|u(t,x;u_0)-p(t)|=0.
\end{equation}
Suppose  by contraction that the result does not hold. Then there are constants $0< \varepsilon< \frac{c^*-c}{2}$, $\delta>0$, and a sequence $\{(x_n,t_n)\}_{n \in \N}$ such that $t_n \to \infty$, $t_n(c+\varepsilon)\leq x_n\leq t_n(c^*-\varepsilon)$, and
\begin{equation}\label{eq000}
|u(t_n,x_n; u_0)-p(t_{n})|\geq \delta \quad  \forall\, n\geq 1.
\end{equation}
For every $n\geq 1$, define $(u_n(t,x),v_n(t,x))=(u(t+t_n,x+x_n;u_0),v(t+t_n,x+x_n;u_0))$. 
Let $[\frac{t_n}{\omega}]$ be the greatest integer less than or equal to $\frac{t_n}{\omega}$. 
By a priori  estimates for parabolic equations, without loss of generality,  we may suppose that $(u_n(t,x),v_n(t,x))\to (u^*(t,x),v^*(t,x))$ locally uniformly in $C^{1,2}(\R\times\R)$ and $t_{n}-[\frac{t_n}{\omega}]\omega\to t^*$ as $n\to\infty$. Furthermore, $(u^*(t,x)$, $v^*(t,x))$  is an entire solution of the following system
\begin{equation}\label{new-Keller-Segel-eq1-infty}
\begin{cases}
u_t^*=u^*_{xx}-\chi ( u^*v^*_{x})_{x}+u^*(r(t+t^*, \infty)-bu^*),\quad t\in\R,\,\, x\in\R\cr
0 =v^*_{xx}-  \lambda v^* +\mu u^*,\quad t\in\R,\,\, x\in\R.
\end{cases}
\end{equation}

Choose $0<\tilde\varepsilon<\varepsilon<\frac{c^*-c}{2} $. For every $x\in \R$ and $t\in \R$, we have
\begin{align}
x+x_n&\leq x+t_n(c^*-\varepsilon) \cr
&=(c^*-\tilde\varepsilon)(t_n+t)-(\varepsilon-\tilde\varepsilon)(t_n-\frac{x-(c^*-\tilde\varepsilon)t}{\varepsilon-\tilde\varepsilon})\cr
&\leq (t+t_n)(c^*-\tilde\varepsilon)
\end{align}
whenever $t_n\geq \frac{\|x\|+(c^*-\tilde\varepsilon)|t|}{\varepsilon-\tilde\varepsilon}$.
On the other hand, For every $x\in \R$ and $t\in \R$, we have
\begin{align}
x+x_n&\geq x+t_n(c+\varepsilon) \cr
&=(c+\tilde\varepsilon)(t_n+t)+(\varepsilon-\tilde\varepsilon)(t_n-\frac{(c+\tilde\varepsilon)t-x}{\varepsilon-\tilde\varepsilon})\cr
&\geq (t+t_n)(c+\tilde\varepsilon)
\end{align}
whenever $t_n\geq \frac{\|x\|+(c+\tilde\varepsilon)|t|}{\varepsilon-\tilde\varepsilon}$.
It then follows that
$$(t+t_n)(c+\tilde\varepsilon)\leq x+x_n\leq (t+t_n)(c^*-\tilde\varepsilon)$$
whenever $t_n\geq \max\{\frac{\|x\|+(c^*-\tilde\varepsilon)|t|}{\varepsilon-\tilde\varepsilon}, \frac{\|x\|+(c+\tilde\varepsilon)|t|}{\varepsilon-\tilde\varepsilon} \}$.
Since
$$
u^*(t,x)=\lim_{n \to \infty}u(t+t_n,x+x_n;u_0)\geq \liminf_{s \to \infty}\inf_{s(c+\tilde\varepsilon)\leq y\leq s(c^*-\tilde\varepsilon)}u(s,y;u_0) >0
$$
for every $(t,x)\in \R \times \R$, we obtain $\inf_{(t,x)\in \R \times \R } u^*(t,x)>0$.
By  \cite[Theorem 1.5]{SaSh4},
we must have $u^*(t,x)=p(t+t^*)$ for every $(t,x)\in \R\times\R$. In particular, $\lim_{n\to\infty}u(t_n, x_n; u_0)=u^*(0,0)=p(t^*)$. 
In view of $\lim_{t\to \infty}p(t_n)=\lim_{t\to \infty}p(t_n-[\frac{t_n}{\omega}]\omega)=p(t^*)$, it follows that $\lim_{n\to\infty}(u(t_n, x_n; u_0)-p(t_n))=0$,
which contradicts \eqref{eq000}.

\medskip

 (3) First, let $\kappa=\sqrt{r^*}$, and $\bar c$ be such that
$$
c^*=2\sqrt {r^*} =c_\kappa<\bar c<|c|.
$$
By Lemma \ref{lem-001-1} (i), it follows that
$$
\lim_{t\to\infty}\sup_{|x|\ge \bar ct }  u(t,x;u_0)=0.
$$
This implies that for any $\varepsilon>0$,
$$
\lim_{t\to\infty}\sup_{|x|\ge (c^*+\varepsilon)t }  u(t,x;u_0)=0.
$$

Next,  for any $0<\varepsilon<c^*$,  let $l$ be as in \eqref{l-eq}.
Fix a $\bar c$ satisfying ${ -c^*+\varepsilon\leq}  \bar c\le c^*-\varepsilon$. Let $\lambda(\bar c)$ be as in \eqref{lambda-eq}.
By the  arguments similar to  those in the proof of \eqref{Tm1.1.2liminf}, it follows that
$$
\inf_{-l+\bar ct+\delta\le x\le l+\bar ct-\delta, -c^*+\varepsilon\leq \bar c\leq c^*-\varepsilon, t\geq T} u(t,x;u_0)>0,  \quad \forall \,\,  0<\delta <l.
$$
Thus, we have 
$$
\inf_{-l+(-c^*+\varepsilon)t+\delta\le x\le l+(c^*-\varepsilon)t-\delta, t\ge T} u(t,x;u_0)>0, \quad \forall \,\, 0<\delta <l$$
and hence, 
$$
\liminf_{t\to\infty}\inf_{(-c^*+\varepsilon)t\le x\le (c^*-\varepsilon)t} u(t,x;u_0)>0.
$$
Moreover,  using the similar arguments as those in the proof of \eqref{tm1.1-2}, we can prove that
if $2\chi\mu<b$, then
$\lim_{t\to\infty} \sup_{(-c^*+\varepsilon)t\le x\le (c^*-\varepsilon)t}|u(t,x;u_0)-p(t)|=0$.
\end{proof}

Next we prove  Theorem \ref{right-support}.

\begin{proof}[Proof of Theorem \ref{right-support}]
(1)   { Suppose $c\geq-c^*=-2\sqrt{r^*}$.} By the same arguments as those in Theorem \ref{compact-support} (2),
it follows  that
$$\lim_{t\to\infty}\sup_{x\leq(c-\varepsilon)t}u(t,x;u_0)=0.$$

Next, we prove that
\begin{equation}\label{Tm1.2-1liminf-c}
\liminf_{t\to\infty}\inf_{x\ge  (c+\varepsilon)t}u(t,x;u_0)>0\quad \forall\,\,  \varepsilon>0.
\end{equation}

To this end, for any $\tilde \varepsilon>0 $, let $u(t,x;u_0)=\tilde u(t,x-(c+\tilde \varepsilon)t;u_0), v(t,x;u_0)=\tilde v(t,x-(c+\tilde \varepsilon)t;u_0) $ in \eqref{Keller-Segel-eq0}  and set $M=\max\{\|u_0\|_{\infty},\frac{\bar r}{b-\chi\mu}\}$. By \eqref{E2-1}, it follows that, for any  $R\gg 1,{ p>1}$, $(\tilde u(t,x;u_0),\tilde v(t,x;u_0))$ satisfies
\begin{equation}\label{AA-eq1-1-1}
\tilde u_t\geq \tilde u_{xx}+(c+\tilde \varepsilon) \tilde u_x-\chi \tilde v_x\tilde u_x+\tilde u(r(t, x+\tilde \varepsilon t)-\varepsilon_R M- C_{R,p} \tilde u^{\frac{1}{p}}-(b-\chi\mu)\tilde u), \quad t\geq 1,\ x\in\R.
\end{equation}
Let $p=2$. Choose  $R\gg 1$, $0<\xi\ll 1$ and $0<\epsilon\ll \min\{1, 2\sqrt{r^*}\}$
such that $
\varepsilon_{R}M<\frac{\xi}{4}$,
\begin{equation}\label{AA-eq2-1}
|\chi \tilde v_x|\leq C_{R}\sqrt {\tilde u(t,x;u_0)}+\frac{\xi}{4}, \quad t\geq 1,\ x\in\R,
\end{equation}
and
\begin{equation}\label{AA-eq2-New1}
-c^*+\epsilon=-2\sqrt{r^*}+\epsilon \leq c +\tilde \varepsilon -\xi.
\end{equation}
Define
$$
B(t,x)=\frac{\chi \tilde v_x(t, x; u_0)}{\max\{1, |\chi \tilde v_x(t, x; u_0)|\xi^{-1}\}}, \quad t\geq 1,\ x\in\R.
$$
For the remaining part of the proof, we proceed with four steps.

\smallskip

\noindent{\bf Step 1.} {\it  We construct some sub-solution for \eqref{AA-eq1-1-1}.}

\smallskip

First, choose $0<\xi_1\ll 1$  such that
 \begin{equation}\label{AA-eq2-2}
 C_R\sqrt{\xi_1}+\xi_1<\frac{\xi}{4}.
 \end{equation}
Next, let $l$ be chosen as in \eqref{l-eq}.  Let $T>1$ be such that $r(t, x+\tilde \varepsilon t)\ge r(t, \infty)-\varepsilon_R M$ for $x\ge -l$ and $t\ge T$, and 
 let $\underline u_1(\cdot)\in C^b_{\rm unif}([-l,\infty))\setminus\{0\}$ be such that
$$
\underline u_1(-l)=0,\quad  \underline u'_1(x)>0, \quad {\rm and} \quad \underline u_1(x)<\tilde u(T,x;u_0), \quad \forall \,\, x\geq -l.
$$
 Choose $ -2\sqrt{r^*}+\epsilon \leq\bar c\leq\min\{2\sqrt{r^*}-\epsilon,(c+\tilde \varepsilon)-\xi\}$. Let $\underline u(t,x)$ be the solution of
\begin{equation}
\label{aux-new-eq011}
\begin{cases}
\underline u_t=\underline u_{xx}+\bar c\underline u_x+\underline u (r(t, \infty) -2\varepsilon_R M- C_{R,p} \underline u^{\frac{1}{p}}-{ (\frac{\bar r}{\xi_1}+(b-\chi\mu) ) } \underline u), \quad t >T,\ x>-l \cr
\underline u(t,-l)=0 \cr
\underline u(T,x)=\frac{\xi_1}{M+\xi_1}\underline u_1(x).
\end{cases}
\end{equation}

Note that $\underline u(t,x)\equiv \xi_1$ is a super-solution of \eqref{aux-new-eq011} and $\|\underline{u}(T,\cdot)\|_{\infty}<\xi_1$. 
Then the comparison principle implies 
$$
\underline{u}(t,x)<\xi_1, \quad\forall\ t\geq T, \ x\geq-l.
$$
Since $\underline u'_1(x)>0$, we have $\underline u_{x}(t,x)>0$ {for any $t\geq T$, $x\geq -l$.}
Note that $|B(t,x)|<\xi$ for all $t\geq 1$, $x\in\R$. 
Thus, $\underline u(t,x)$ satisfies
\begin{align*}
\underline{u}_t= &\underline u_{xx}+\bar c\underline u_x+\underline u (r(t, \infty) -2\varepsilon_R M- C_{R,p} \underline u^{\frac{1}{p}}-{ (\frac{\bar r}{\xi_1}+(b-\chi\mu) ) } \underline u)\cr
<&\underline u_{xx}+(c+\tilde \varepsilon-B(t,x))\underline u_x+\underline u (r(t, \infty) -2\varepsilon_R M- C_{R,p} \underline u^{\frac{1}{p}}-(b-\chi\mu)\underline u) \quad\forall\ t>T, \ x>-l.
\end{align*}

\smallskip

\noindent {\bf Step 2.} {\it We show that}
\begin{equation}\label{AA-Stp2eq1}
\liminf_{t\to\infty}\inf_{x\ge -l+\delta} \underline u(t,x)>0,{\quad  \forall \,\, \delta>0.  }
\end{equation}

\smallskip

{
Choose $\bar b>\frac{\bar r}{\xi_1}+b$ and also }$\bar b\gg 1$ such that $u(T{ +1},x;\bar c,\bar b)\leq\underline u(T { +1} ,x)$  for $-l\le x\le l$, where $u(t,x;\bar c,\bar b)$ is a positive entire solution of \eqref {aux-new-eq01} with $A(t,x)=0$. Fix such $\bar b$.
 It follows from the comparison principle that
 $$
 u(t,x;\bar c,\bar b)<\underline u(t,x), \quad t>T{ +1} ,\ -l<x<l.
 $$
 Repeating the same procedure, by induction, we get
 $$
 u(t,x-kl;\bar c,\bar b)<\underline u(t,x), \quad t>T{ +1}  ,\ (k-1)l<x<(k+1)l, \ k=0,1,2,\cdots
  $$
 This implies that there exists $\delta_0 >0$ such that
 $$\inf_{(k-1)l+\delta \leq x\leq (k+1)l-\delta, t\in\R}u(t,x-kl;\bar c,\bar b)>\delta_0,$$
 for any $0<\delta<l, \ k=0,1,2,\cdots$. Thus, we have
 $$
 \liminf_{t\to\infty}\inf_{x\geq -l+\delta}\underline u(t,x)>0,{\quad  \forall \,\, \delta>0.  }
 $$

 \noindent {\bf Step 3.} {\it  We show that}
\begin{equation}\label{AA-Ste3eq1}
\underline{u}(t,x)\leq \tilde u(t,x;u_0), \quad \forall\ x\geq -l , \ t\ge T.
\end{equation}

Note that
$$
\underline{u}(t,-l)=0<\tilde u(t,-l;u_0)\quad \forall\, \, t\ge T,
$$
$$
\underline{u}(T,x)<\tilde u(T,x;u_0)\quad \forall\, x\ge -l,
$$
and
$$
\tilde u_\infty(T):=\liminf_{x\to \infty}\tilde u(T,x;u_0)>\underline{u}_\infty(T):=\lim_{x\to\infty}\underline{u}(T,x).
$$
For given $t\ge T$, let
$$
\tilde u_\infty(t)=\liminf_{x\to \infty}\tilde u(t,x;u_0),\quad \underline{u}_\infty(t)=\lim_{x\to\infty}\underline{u}(t,x).
$$
We claim that
$$
\tilde u_\infty (t)>\underline{u}_\infty(t)\quad \forall\,\,\ t>T.
$$
In fact, for any given $t_0>T$, there is $x_n\to\infty$ such that
$$
\tilde u(t_0,x_n;u_0)\to \tilde u_\infty(t_0),\quad \underline{u}(t_0,x_n)\to \underline{u}_\infty(t_0)
$$
as $n\to\infty$.
Without loss of generality, we may assume that
$$
\tilde u(t,x+x_n;u_0)\to  \tilde u^*(t,x),\quad \underline{u}(t,x+x_n)\to \underline{u}^*(t)
$$
as $n\to\infty$ locally uniformly in $(t,x)\in (T,\infty)\times \R$.
By $\tilde u_\infty(T)>\underline{u}_\infty(T)$ and the comparison principle, it follows that
$$
\tilde u^*(t,x)>\underline{u}^*(t)\quad \forall\,\, t>T,\quad x\in\R.
$$
In particular, we have
$$
\tilde u_\infty(t_0)=\tilde u^*(t_0,0)>\underline{u}_\infty(t_0).
$$
This shows that  the claim holds true.

Next, assume that there are $t_{1}>T$ and $x_{1}>-l$ such that $\tilde u(t_{1},x_{1};u_0)<\underline{u}(t_{1},x_{1})$.
Then there is $t_{\inf}>T$ such that
$$
\tilde u(t,x;u_0)>\underline{u}(t,x)\quad \forall\,\, T\le t<t_{\inf},\,\, x\ge -l.
$$
%and
%$$
%\inf_{x\ge -l}\big(\tilde u(t_{\inf},x;u_0)- \underline u(t_{\inf},x)\big)=0.
%$$
By the above claim and the definition of $t_{\inf}$, there is $x_{\inf}\in (-l,\infty)$ such that
$$
\tilde u(t_{\inf},x_{\inf};u_0)=\underline{u}(t_{\inf},x_{\inf}).
$$
Thus,  the arguments similar to those in the proof of \eqref{AA-eq3}
implies 
$$
\tilde u(t_{\inf},x_{\inf};u_0)>\underline u(t_{\inf},x_{\inf}),
$$
which is a contradiction. This shows that  \eqref{AA-Ste3eq1} holds
true.

\smallskip

\noindent {\bf Step 4.} {\it  We prove \eqref{Tm1.2-1liminf-c}.}

By \eqref{AA-Stp2eq1} and \eqref{AA-Ste3eq1}, we deduce that $$
\liminf_{t\to\infty}\inf_{x\ge -l+\delta} \tilde u(t,x;u_0)>0, {\quad  \forall \,\, \delta>0.  }
$$
Since $u(t,x;u_0)=\tilde u(t,x-(c+\tilde \varepsilon)t; u_0)$, we have
$$
\liminf_{t\to\infty}\inf_{x\ge -l+(c+\tilde \varepsilon)t+\delta} u(t,x;u_0)>0 \quad  \forall \,\, \delta>0.  
$$
Hence, for any $\varepsilon>0$
$$
\liminf_{t\to\infty}\inf_{x\ge (c+\varepsilon)t} u(t,x;u_0)>0.
$$

\smallskip

Finally, we prove that if $b>2\chi\mu$,
$$
\lim_{t\to\infty} \sup_{x\geq (c+\varepsilon)t}|u(t,x;u_0)-p(t)|=0.
$$
 This can be proved by the  arguments similar to those in the proof of \eqref{tm1.1-2}.

 \medskip

 (2) Suppose $c<-c^*=-2\sqrt{r^*}$.
First, let $\kappa=\sqrt{r^*}$, and let  $\bar c$ be such that
$$
c<\bar c<-c^*=-2\sqrt {r^*}=-c_{\kappa}.
$$
By Lemma \ref{lem-001-1} (ii), we have 
$$
\lim_{t\to\infty}\sup_{x\le \bar ct }  u(t,x;u_0)=0.
$$
This implies that for any $\varepsilon>0$,
\begin{equation}\label{u-left}
\lim_{t\to\infty}\sup_{x\le (-c^*-\varepsilon)t }  u(t,x;u_0)=0.
\end{equation}

Next, we prove for any $\varepsilon>0$,
\begin{equation}\label{right-support-1-1}
\liminf_{t\to\infty}\inf_{x\ge  (-c^*+\varepsilon)t}u(t,x;u_0)>0.
\end{equation}
It suffices to prove that for any $0<\varepsilon<2\sqrt{r^*}$, \eqref{right-support-1-1} holds.
Let $0<\varepsilon< 2\sqrt{r^*}$ be given, let $u(t,x;u_0)=\tilde u(t,x-(-c^*+\varepsilon)t;u_0), v(t,x;u_0)=\tilde v(t,x-(-c^*+\varepsilon)t;u_0) $ in \eqref{Keller-Segel-eq0}  and set $M=\max\{\|u_0\|_{\infty},\frac{\bar r}{b-\chi\mu}\}$. By \eqref{E2-1}, it follows that, for any  $R\gg 1,{ p>1}$, $(\tilde u(t,x;u_0),\tilde v(t,x;u_0))$ satisfies
\begin{equation}\label{3AA-eq1}
\tilde u_t\geq \tilde u_{xx}+(-c^*+\varepsilon) \tilde u_x-\chi \tilde v_x\tilde u_x+\tilde u(r(t, x+(-c^*+\varepsilon-c) t)-\varepsilon_R M- C_{R,p} \tilde u^{\frac{1}{p}}-(b-\chi\mu)\tilde u), \quad t\geq 1,\ x\in\R.
\end{equation}
Choose $0<\xi\ll \min\{1,\frac{\varepsilon}{2}\}$. Fix a $\bar c$ satisfying $-c^*+\frac{\varepsilon}{2}\leq \bar c\leq -c^*+\varepsilon-\xi$.
By the similar arguments as those in the proof of \eqref{Tm1.2-1liminf-c}, it can be proved that
$$
\liminf_{t\to\infty}\inf_{x\ge -l+\delta} \tilde u(t,x;u_0)>0, {\quad  \forall \,\, \delta>0.  }
$$
Since $u(t,x;u_0)=\tilde u(t,x-(-c^*+\varepsilon)t)$, we have
$$
\liminf_{t\to\infty}\inf_{x\ge -l+(-c^*+\varepsilon)t+\delta} u(t,x;u_0)>0, {\quad  \forall \,\, \delta>0.  }$$
Hence, for any $\varepsilon>0$,
$$
\liminf_{t\to\infty}\inf_{x\ge (-c^*+\varepsilon)t} u(t,x;u_0)>0.
$$

\smallskip

Finally, we prove if $b>2\chi\mu$,
$$
\lim_{t\to\infty} \sup_{x\geq (-c^*+\varepsilon)t}|u(t,x;u_0)-p(t)|=0.
$$
This can be proved by the arguments similar to those in the proof of \eqref{tm1.1-2}.
\end{proof}

\section{Forced wave solutions}

In this section, we study the existence and nonexistence of forced wave solutions of \eqref{Keller-Segel-eq0} and  prove Theorem \ref{forced-wave} and Theorem \ref{nonexistence-forced-wave}.  Throughout this section, we assume that {\bf (H1)} holds. We start with  a series of lemmas.

\smallskip

Suppose that $b>\chi\mu$. 
Let $x_1<0$ be given satisfying that $r(t, x_1)<0$ for all $t\in\R$, let $r_1=\sup_{t\in\R} r(t, x_1)<0$, %Note that $\sup_{t\in\R} r(t, x_1)=\max_{t\in [0,\omega]} r(t, x_1)<0$. Therefore, $r_1<0$.
and let $\theta$ be the positive root of the equation $\theta^2+c\theta+r_1=0$.
Define
\begin{equation}\label{U-Def}
U^{+}(x)=\min\{\frac{\bar r}{b-\chi\mu}, \frac{\bar r}{b-\chi\mu}e^{\theta(x-x_1)}\},
\end{equation}
and 
\begin{equation}\label{E-set}
\mathcal{E}^+:=\{u\in C(\R, C^b_{\rm unif}(\R)) : 0\leq u(t, x)\leq U^{+}(x),\,\ u(t, x)=u(t+\omega, x)\,\ \forall\,\ t\in\R,\, x\in\R\}.
\end{equation}
For every $u\in\mathcal{E}^+$, we consider the operator
\begin{equation}\label{A-equ}
\mathcal{A}_{u}(U)=\p_{xx}U+(c-\chi\p_{x}\Psi(\cdot,\cdot;u))\p_{x}U+(r(t, x)-\chi\lambda\Psi(\cdot,\cdot;u)-(b-\chi\mu)U)U-\p_{t}U,
\end{equation}
where $\Psi(t, x;u)$ is given by \eqref{psi-definition}.
 
A straightforward calculation, as shown in \cite[Lemma 3.1]{ShXu2}, 
gives rise to the  following result.

\begin{lem}
\label{super-solu-lm}
Suppose that {$b\geq \frac{3}{2}\chi\mu$}. For every $u\in \mathcal{E}^+$, it holds that $\mathcal{A}_{u}(\frac{\bar r}{b-\chi\mu})(t, x)\leq 0$ for $t\in\R$, $x\in\R$ and
$\mathcal{A}_{u}(\frac{\bar r}{b-\chi\mu}e^{\theta(\cdot-x_1)})(t, x)\leq 0$ for $t\in\R$, $x\in (-\infty,x_1)$.
\end{lem}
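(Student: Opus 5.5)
We verify both inequalities by direct substitution into \eqref{A-equ}, using Lemma \ref{V-estimat} and Lemma \ref{lem-001} to control the chemotactic terms, since every $u\in\mathcal{E}^+$ satisfies $0\le u\le \frac{\bar r}{b-\chi\mu}$.

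\emph{First inequality.} Set $K=\frac{\bar r}{b-\chi\mu}$. Since $K$ is constant, $\p_{xx}K=\p_xK=\p_tK=0$. Hence
$$\mathcal{A}_u(K)=\big(r(t,x)-\chi\lambda\Psi(t,x;u)-(b-\chi\mu)K\big)K.$$
By (H1) and monotonicity in $x$, $r(t,x)\le r(t,\infty)\le\bar r=(b-\chi\mu)K$. Also $\Psi\ge 0$ because $u\ge0$. Therefore $\mathcal{A}_u(K)\le 0$ on $\R\times\R$.

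\emph{Second inequality.} Let $W(x)=Ke^{\theta(x-x_1)}$ and take $x<x_1$. Then
$$\mathcal{A}_u(W)=W\Big(\theta^2+c\theta-\chi\theta\,\p_x\Psi+r(t,x)-\chi\lambda\Psi-(b-\chi\mu)W\Big).$$
Because $r$ is nondecreasing in $x$, $r(t,x)\le r(t,x_1)\le r_1$. Since $\theta^2+c\theta+r_1=0$, the first bracket terms reduce to
$$\mathcal{A}_u(W)\le W\big(-\chi\theta\,\p_x\Psi-\chi\lambda\Psi-(b-\chi\mu)W\big).$$
The term $-(b-\chi\mu)W$ only helps, so it suffices to show $-\chi\theta\,\p_x\Psi-\chi\lambda\Psi\le0$. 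By Lemma \ref{lem-001}, $|\p_x\Psi|\le\sqrt\lambda\,\Psi$, so
$$-\chi\theta\,\p_x\Psi-\chi\lambda\Psi\le \chi\sqrt\lambda\,\Psi(\theta-\sqrt\lambda).$$
This is $\le0$ when $\theta\le\sqrt\lambda$.

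\emph{Main obstacle: the case $\theta>\sqrt\lambda$.} Here the crude bound fails, and we must use that $u\le W$ on $(-\infty,x_1)$ together with the hypothesis $b\ge\frac32\chi\mu$. We follow the computation of \cite[Lemma 3.1]{ShXu2}. Write $-\p_x\Psi$ via \eqref{space-derivative-of-psi-1}. The negative part $-\frac{\mu}{2}e^{\sqrt\lambda x}\int_x^\infty e^{-\sqrt\lambda y}u\,dy$ can be dropped, leaving $-\p_x\Psi\le \frac{\mu}{2}e^{-\sqrt\lambda x}\int_{-\infty}^x e^{\sqrt\lambda y}u\,dy$. Using the same left integral as a lower bound for $\Psi$ in \eqref{psi-definition-eq2}, the combination
$$-\chi\theta\,\p_x\Psi-\chi\lambda\Psi\le\frac{\chi\mu(\theta-\sqrt\lambda)}{2}e^{-\sqrt\lambda x}\int_{-\infty}^x e^{\sqrt\lambda y}u(t,y)\,dy.$$
For $y\le x<x_1$ we have $u(t,y)\le Ke^{\theta(y-x_1)}$. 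Integrating gives the bound
$$\frac{\chi\mu}{2}\cdot\frac{\theta-\sqrt\lambda}{\theta+\sqrt\lambda}\,W(x)\le\frac{\chi\mu}{2}W(x).$$
This is absorbed by $(b-\chi\mu)W$ since $b-\chi\mu\ge\frac12\chi\mu$. This closes the estimate and gives $\mathcal{A}_u(W)\le0$ for $x<x_1$.

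Note that the pointwise bound $u\le W$ is used only on $(-\infty,x]\subset(-\infty,x_1)$, where $U^+=W$. This is why the second inequality is asserted only for $x<x_1$.
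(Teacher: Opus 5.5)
Your proof is correct and is essentially the ``straightforward calculation'' the paper defers to \cite[Lemma 3.1]{ShXu2}. The root equation $\theta^2+c\theta+r_1=0$ absorbs the linear terms, and the chemotactic part is bounded by $\frac{\chi\mu}{2}\frac{(\theta-\sqrt{\lambda})_+}{\theta+\sqrt{\lambda}}W\le(b-\chi\mu)W$ (the reflected form of \eqref{estimate-on-space-derivative-2}), which is exactly where $b\ge\frac32\chi\mu$ enters. One small correction to your closing remark: $u\le U^+\le W$ holds on all of $\R$, so the restriction to $x<x_1$ is needed only for $r(t,x)\le r_1$, not for the bound on $u$.
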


%\begin{proof}
%It can be proved by the similar calculation in \cite[Lemma 3.1]{ShXu2}.
%	{\color{red} revise?,   write a sentence in front of this lemma}
%\end{proof}

Suppose $b>(1+\frac{\bar r}{\underline r})\chi\mu$. For any $0<\varepsilon\ll 1$, we  define an ignition nonlinearity by
\begin{equation}\label{ignition-type}
f_{\varepsilon}(t, u)=\begin{cases}
u\left(r(t, \infty)-\varepsilon -\frac{\chi\mu \bar r}{b-\chi\mu}-(b-\chi\mu)u\right), \,\, &{\rm if} \,\, u\geq 0,\cr
0                     \,\, &{\rm if} -\varepsilon\leq u<0, 
\end{cases}
\end{equation}
and consider the following reaction-diffusion equation
\begin{equation}\label{ignition-eq}
u_t=u_{xx}+f_{\varepsilon}(t, u),\quad x\in \R, \,\, t\in\R.
\end{equation}
{Equation \eqref{ignition-eq} has a spatially decreasing time periodic traveling wave solution $u^{\varepsilon}(t, x)$  (see \cite[Theorem 1.2]{SheShe}), that is, there are a constant $c_{\varepsilon}\in \R$ and a function $\phi^{\varepsilon}(\cdot,\cdot)$ such that $u^{\varepsilon}(t, x)=\phi^{\varepsilon}(t, x-c_{\varepsilon}t)$ for $x\in\R$ and $t\in\R$ satisfying
\begin{equation}\label{ignit-TV0-eq}
\begin{cases}
\phi^{\varepsilon}_{t}=\phi^{\varepsilon}_{xx}+c_{\varepsilon}\phi^{\varepsilon}_{x}+f_{\varepsilon}(t, \phi^{\varepsilon}),\,\ t\in\R, \,\, x\in\R,\cr
\lim_{x\to -\infty}\phi^{\varepsilon}(t, x)=q^{\varepsilon}(t), \,\, \lim_{x\to \infty}\phi^{\varepsilon}(t, x)=-\varepsilon \,\, {\rm uniformly}\,\, {\rm in}\,\ t\in\R,\cr
\phi^{\varepsilon}(t, x)=\phi^{\varepsilon}(t+\omega, x),\,\ \phi^{\varepsilon}_{x}(t,x)<0, \,\ t\in\R, \,\, x\in\R,
\end{cases}
\end{equation}
where $q^{\varepsilon}(t)$ is the unique positive $\omega$-periodic solution of 
\begin{equation}\label{q-periodic}
q^{\varepsilon}_{t}=q^{\varepsilon}(r(t, \infty)-\varepsilon -\frac{\chi\mu \bar r}{b-\chi\mu}-(b-\chi\mu)q^{\varepsilon}).
\end{equation}
}
Let $\psi^{\varepsilon}(t, x)=\phi^{\varepsilon}(t, -x)$ for any $t\in\R$, $x\in\R$. It then follows from \eqref{ignit-TV0-eq} that
\begin{equation}\label{ignit-TV}
\begin{cases}
\psi^{\varepsilon}_{t}=\psi^{\varepsilon}_{xx}-c_{\varepsilon}\psi^{\varepsilon}_{x}+f_{\varepsilon}(t, \psi^{\varepsilon}), \,\ t\in\R, \,\, x\in\R,\cr
\lim_{x\to\infty}\psi^{\varepsilon}(t, x)= q^{\varepsilon}(t), \,\, \lim_{x\to-\infty}\psi_{\varepsilon}(t, x)=-\varepsilon \,\, {\rm uniformly}\,\, {\rm in}\,\ t\in\R,\cr
\psi^{\varepsilon}(t, x)=\psi^{\varepsilon}(t+\omega, x), \,\, \psi^{\varepsilon}_{x}(t,x)>0, \,\ t\in\R, \,\, x\in\R.
\end{cases}
\end{equation}
For each $t\in\R$, there exists $x_0(t)\in\R$ such that $\psi^{\varepsilon}(t, x_{0}(t))=0$. Without loss of generality, we can assume that $r(t, x_0(t))\ge r(t, \infty)-\varepsilon$ for $t\in\R$ and $x_0(t)>x_1$ for $t\in\R$.
This can be achieved by some appropriate space translation of $\psi^{\varepsilon}(t, x)$ if necessary.

\begin{lem}\label{c-vare-limit}
$\lim_{\varepsilon\to 0^+} c_{\varepsilon}=2\sqrt{\frac{1}{\omega}\int_{0}^{\omega}(r(t, \infty)-\frac{\chi\mu\bar r}{b-\chi\mu})dt}$.
\end{lem}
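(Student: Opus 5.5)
We need $\lim_{\varepsilon\to0^+}c_\varepsilon = 2\sqrt{a}$, where $a:=\frac{1}{\omega}\int_0^\omega\big(r(t,\infty)-\frac{\chi\mu\bar r}{b-\chi\mu}\big)dt$. Note that $a>0$ because $b>(1+\frac{\bar r}{\underline r})\chi\mu$. Write $g_\varepsilon(t)=r(t,\infty)-\varepsilon-\frac{\chi\mu\bar r}{b-\chi\mu}$ and $a_\varepsilon=\frac1\omega\int_0^\omega g_\varepsilon\,dt=a-\varepsilon$. The plan is to prove two inequalities: a lower bound $\liminf_{\varepsilon\to0}c_\varepsilon\ge 2\sqrt a$, obtained by comparing with the KPP problem $u_t=u_{xx}+u(g_\varepsilon(t)-(b-\chi\mu)u)$, and an upper bound $c_\varepsilon\le 2\sqrt{a_\varepsilon}+o(1)$, obtained by constructing a periodic exponential supersolution. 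A preliminary normalization: the traveling wave $u^\varepsilon(t,x)=\phi^\varepsilon(t,x-c_\varepsilon t)$ is decreasing in $x$ and invades the state $-\varepsilon$, so the front moves rightward with speed $c_\varepsilon$.

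\emph{Upper bound.} For $u\ge0$, $f_\varepsilon(t,u)\le g_\varepsilon(t)u$. On $[-\varepsilon,0)$, $f_\varepsilon=0$, so the linear bound fails there. To handle this, I shift the state: set $w=u+\varepsilon$, which lives in $[0,q^\varepsilon+\varepsilon]$. Then $f_\varepsilon(t,w-\varepsilon)\le g_\varepsilon^+(t)\,w$ for all $w\ge0$, where I use that $f_\varepsilon$ vanishes for $w\le\varepsilon$. With $\kappa>0$ to be chosen, define
\[
\bar w(t,x)=K\,e^{-\kappa(x-c_\kappa' t)}e^{\int_0^t(g_\varepsilon(s)-a_\varepsilon)ds},\qquad c_\kappa'=\kappa+\frac{a_\varepsilon}{\kappa}.
\]
This is a supersolution of the linearization. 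The difficulty is that $g_\varepsilon$ may be negative on part of the period. I avoid using $g_\varepsilon^+$ in the estimate by observing that for $u\ge0$ the true inequality is $f_\varepsilon(t,u)\le g_\varepsilon u$, with no positive part. Near the front tail $u<0$ we have $f=0$, and there $\bar w$ is still a supersolution of $w_t=w_{xx}$ after a time reparametrization handled via the periodic factor. If this direct approach is too delicate, I will instead use the standard characterization of speeds for periodic ignition waves as the $\varepsilon'\to0$ limit of speeds with ignition cutoff, as in \cite{SheShe}.

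Comparing the wave (started as a front lying below $\min\{q^\varepsilon+\varepsilon,\bar w\}$) with $\bar w$ yields $c_\varepsilon\le c_\kappa'$. Minimizing over $\kappa$ then gives $c_\varepsilon\le 2\sqrt{a_\varepsilon}\le 2\sqrt a$.

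\emph{Lower bound.} This is the main obstacle, since the ignition cutoff genuinely slows the wave. For any $\delta>0$, I choose $\varepsilon$ small and apply Lemma \ref{lem-001-2} (with $r(t,\infty)$ replaced by $g_\varepsilon-\delta$ and $\bar c=2\sqrt a-\delta'$) to obtain a compactly supported periodic subsolution
\[
\sigma e^{\lambda t}\psi(t)e^{-\bar c x/2}\cos\frac{\pi x}{2l}
\]
on $(-l,l)$ in the moving frame, with $\lambda>0$. Taking $\sigma$ small ensures that it lies in the range $u\ge0$, where $f_\varepsilon(t,u)\ge (g_\varepsilon-\delta)u$, and below the wave far behind the front, where $\phi^\varepsilon\approx q^\varepsilon$. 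Comparison as in the proof of Lemma \ref{lem-001-3} shows that the wave's level set $\{\phi=\sigma_0\}$ must advance at average speed at least $\bar c$. Hence $c_\varepsilon\ge 2\sqrt a-\delta'$ once $\varepsilon$ is small relative to $\delta'$. Letting $\delta'\to0$ and combining with the upper bound gives the limit.
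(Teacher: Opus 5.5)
Your upper bound is repairable, but the lower bound has a genuine gap: the comparison you invoke is circular. The wave $u^\varepsilon=\phi^\varepsilon(t,x-c_\varepsilon t)$ is \emph{negative} ahead of its front, where it tends to $-\varepsilon$. So $\sigma\psi(t)e^{-\bar c y/2}\cos\frac{\pi y}{2l}$ on $y=x-\bar ct\in(-l,l)$, extended by $0$, does not lie below $u^\varepsilon$ on $\R$. You can only compare on the moving interval itself, and that requires $u^\varepsilon\ge 0$ at $x=\bar ct\pm l$ for all times. In other words, the interval must stay behind the zero set of the wave, which is exactly the statement $c_\varepsilon\ge\bar c$ you are trying to prove. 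If $c_\varepsilon<\bar c$, the interval simply reaches the front at some finite time and the comparison stops, with no contradiction. The comparison in Lemma \ref{lem-001-3} and in the proof of Theorem \ref{compact-support} works only because there the solution is nonnegative everywhere, so the Dirichlet condition comes for free.

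The paper avoids this altogether. It first shows $c_\varepsilon$ is nonincreasing in $\varepsilon$ by comparing two waves, so $\tilde c=\lim c_\varepsilon$ exists and is finite. It then normalizes $\phi^{\varepsilon_n}(0,0)=\frac12\min q$ and passes to a limit. The limit is a nonincreasing periodic KPP wave, with growth $r(t,\infty)-\frac{\chi\mu\bar r}{b-\chi\mu}$, connecting $q(t)$ and $0$ at speed $\tilde c$. The minimal-speed theory then gives $\tilde c\ge 2\sqrt a$.

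If you want to keep a subsolution argument, you need an extra idea. Work with $w=u^\varepsilon+\varepsilon>0$ in the $\bar c$-frame. Since the nonlinearity vanishes for $w\in[0,\varepsilon]$, a subsolution must sit on a plateau. Take $\varepsilon+\sigma\psi(t)e^{-\bar c y/2}\cos\frac{\pi y}{2l}$ on $(-l,l)$. At $y=\pm l$, where it equals $\varepsilon$, glue it to truncated solutions $\max\{A+Be^{-\bar c y},0\}$ of $w_{yy}+\bar c w_y=0$, with convex kinks. At $y=l$ this requires $\bar c\,\varepsilon<\sigma\min\psi\, e^{-\bar c l/2}\frac{\pi}{2l}$, i.e.\ $\varepsilon$ small relative to $\sigma$ and $l$. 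This function is compactly supported in $w$, so it can be placed below $w$ on all of $\R$ far behind the front. Comparison then gives $w(t,\bar ct+h)\ge\varepsilon+\sigma\min\psi$ for all $t\ge 0$, contradicting $c_\varepsilon<\bar c$.

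On the upper bound, two fixes are needed.
\begin{itemize}
\item Your worry that $g_\varepsilon$ changes sign is unfounded. The hypothesis $b>(1+\frac{\bar r}{\underline r})\chi\mu$ gives $g_\varepsilon(t)\ge\underline r-\varepsilon-\frac{\chi\mu\bar r}{b-\chi\mu}>0$ pointwise for small $\varepsilon$. Hence $f_\varepsilon(t,w-\varepsilon)\le g_\varepsilon(t)w$ for all $w\ge0$, and $\bar w$ is an exact solution of the linearized equation. No ``time reparametrization'' is needed, and that remark would be false if $g_\varepsilon$ did change sign.
\item You cannot choose the wave's initial data; you must verify $\phi^\varepsilon(0,\cdot)+\varepsilon\le\bar w(0,\cdot)$. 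If $c_\varepsilon\le\kappa=\sqrt{a_\varepsilon}$ there is nothing to prove. Otherwise, ahead of the zero set the profile solves $w_t=w_{zz}+c_\varepsilon w_z$, which gives $\phi^\varepsilon+\varepsilon\le Ke^{-c_\varepsilon z}$ there, and this suffices.
\end{itemize}
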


\begin{proof}
First of all, we claim that $c_{\varepsilon}$ is nonincreasing in $\varepsilon>0$. Let $\varepsilon_2>\varepsilon_1>0$, and $u^{\varepsilon_1}(t,x)=\phi^{\varepsilon_1}(t, x-c_{\varepsilon_1}t)$ and $u^{\varepsilon_2}(t,x)=\phi^{\varepsilon_2}(t, x-c_{\varepsilon_2}t)$ be spatially decreasing time periodic traveling wave solution of \eqref{ignition-eq} with $\varepsilon$ being replaced by $\varepsilon_1$ and $\varepsilon_2$ respectively. By the definition of $f_{\varepsilon}$, $f_{\varepsilon_1}> f_{\varepsilon_2}$. Since any space translation of a traveling wave solution is still a traveling wave solution of \eqref{ignition-eq}, without loss of generality, we may assume that $u^{\varepsilon_1}(0, x)>u^{\varepsilon_2}(0, x)$ for any $x\in\R$ and $u^{\varepsilon_2}(0, 0)>-\frac{\varepsilon_1}{2}$. By the comparison principle, we have 
$$
u^{\varepsilon_1}(t,x)=\phi^{\varepsilon_1}(t, x-c_{\varepsilon_1}t)>u^{\varepsilon_2}(t,x)=\phi^{\varepsilon_2}(t, x-c_{\varepsilon_2}t),\quad \forall \,\ t>0, \,\ x\in\R.
$$
We further claim that $c_{\varepsilon_1}\geq c_{\varepsilon_2}$. Indeed, if not, $c_{\varepsilon_1} <c_{\varepsilon_2}$. Since $\phi^{\varepsilon_i}(t, z)$ is $\omega$-periodic in $t$, $i=1,2$, for every positive integer $k$, we obtain 
$$
\phi^{\varepsilon_1}(0, x-c_{\varepsilon_1}k\omega)=\phi^{\varepsilon_1}(k\omega, x-c_{\varepsilon_1}k\omega)>\phi^{\varepsilon_2}(k\omega, x-c_{\varepsilon_2}k\omega)=\phi^{\varepsilon_2}(0, x-c_{\varepsilon_2}k\omega).
$$
Taking $x=c_{\varepsilon_2}k\omega$ in above inequality, we get $\phi^{\varepsilon_2}(0,0)\leq \phi^{\varepsilon_1}(0, (c_{\varepsilon_2}-c_{\varepsilon_1})k\omega)\to -\varepsilon_1$ as $k\to\infty$, which contradicts to the assumption $\phi^{\varepsilon_2}(0,0)=u^{\varepsilon_2}(0, 0)>-\varepsilon_1$. Hence, we have $c_{\varepsilon_1}\geq c_{\varepsilon_2}$, which implies that $c_{\varepsilon}$ is nonincreasing in $\varepsilon>0$. It is easy to see that for any small $\varepsilon>0$, $c_{\varepsilon}\leq 2\sqrt{\frac{1}{\omega}\int_{0}^{\omega}(r(t, \infty)-\frac{\chi\mu\bar r}{b-\chi\mu})dt}$. Thus, $\lim_{\varepsilon\to 0^{+}}c_{\varepsilon}$ exists. Let $\tilde c=\lim_{\varepsilon\to 0^{+}}c_{\varepsilon}$. We then have $\tilde c\leq 2\sqrt{\frac{1}{\omega}\int_{0}^{\omega}(r(t, \infty)-\frac{\chi\mu\bar r}{b-\chi\mu})dt}$.

Consider a sequence $\{\varepsilon_{n}\}$ such that $\varepsilon_{n}\to 0$ as $n\to\infty$. For each $\varepsilon_{n}$, $n=1, 2, \cdot \cdot \cdot$, there exists a solution $(c_{\varepsilon_{n}}, \phi^{\varepsilon_n})$ satisfying \eqref{ignit-TV0-eq} with $\varepsilon$ being replaced by $\varepsilon_{n}$. Let $q(t)$ be the unique positive $\omega$-periodic solution of the following equation
\begin{equation}\label{no-epsi-eq}
q_{t}=q(r(t, \infty)-\frac{\chi\mu \bar r}{b-\chi\mu}-(b-\chi\mu)q). 
\end{equation}
Without loss of generality, we may assume that $\phi^{\varepsilon_n}(0, 0)=\frac{1}{2}\min_{t\in[0, \omega]}q(t)$ for any $n=1, 2, \cdot \cdot \cdot$, $c_{\varepsilon_{n}}\to \tilde c$ as $n\to\infty$ and $\phi^{\varepsilon_{n}}(t,x)\to \phi(t,x)$ as $n\to\infty$ locally uniformly in $t\in\R$ and $x\in\R$. Moreover, $(\tilde c, \phi)$ satisfies
\begin{equation*}\label{ignit-TV0}
\begin{cases}
\phi_{t}=\phi_{xx}+\tilde c\phi_{x}+\phi(r(t, \infty)-\frac{\chi\mu \bar r}{b-\chi\mu}-(b-\chi\mu)\phi),\,\ t\in\R, \,\, x\in\R,\cr
\phi(0, 0)=\frac{1}{2}\min_{t\in[0, \omega]}q(t)\cr
\phi(t, x)=\phi(t+\omega, x), \,\ t\in\R, \,\, x\in\R,
\end{cases}
\end{equation*}
Since $\phi^{\varepsilon_{n}}_{x}(t,x)<0$ and $0\leq \phi^{\varepsilon_{n}}(t,x)\leq \max_{t\in[0, \omega]}q(t)$, we have $\phi_{x}(t,x)\leq 0$ and $0\leq \phi(t,x)\leq \max_{t\in[0, \omega]}q(t)$, which implies that $\phi(t, \pm\infty):=\lim_{x\to\pm\infty}\phi(t,x)$ exist for any $t\in\R$ and are periodic solution of \eqref{no-epsi-eq}. By the uniqueness of positive $\omega$-periodic solution of \eqref{no-epsi-eq}, we have $\phi(t, \pm\infty)\in\{0, q(t)\}$.  In view of the monotonicity of $\phi(t,x)$ in $x$, we obtain
$$
\phi(0, +\infty)\leq \phi(0,0)=\frac{1}{2}\min_{t\in[0, \omega]}q(t)\leq \phi(0, -\infty),
$$
which implies that $\phi(t, +\infty)=0$ and $\phi(t, -\infty)=q(t)$. Therefore, $w(t,x)=\phi(t, x-\tilde ct)$ is a periodic traveling wave solution of 
\begin{equation}\label{w-periodic}
w_{t}=w_{xx}+w(r(t, \infty)-\frac{\chi\mu \bar r}{b-\chi\mu}-(b-\chi\mu)w)
\end{equation}
connecting $q(t)$ and $0$. Since $2\sqrt{\frac{1}{\omega}\int_{0}^{\omega}(r(t, \infty)-\frac{\chi\mu\bar r}{b-\chi\mu})dt}$ is the minimal speed of time periodic positive traveling wave solutions of \eqref{w-periodic} connecting $q(t)$ and $0$, it follows
that  
$$\tilde c\geq 2\sqrt{\frac{1}{\omega}\int_{0}^{\omega}(r(t, \infty)-\frac{\chi\mu\bar r}{b-\chi\mu})dt}. $$
Therefore, $\tilde c=2\sqrt{\frac{1}{\omega}\int_{0}^{\omega}(r(t, \infty)-\frac{\chi\mu\bar r}{b-\chi\mu})dt}$. 
 \end{proof}

\begin{lem}\label{subsolution}
For every  $u\in \mathcal{E}^+$ and $0<\varepsilon\ll 1$, $U^{-}(t, x)=\max\{\psi^{\varepsilon}(t, x),0\}$ satisfies that
$\mathcal{A}_u(U^{-}(\cdot, \cdot))(t, x)\geq 0$ for any $x\not=x_0(t)$.
Moreover, $U^{-}(t, x)<U^{+}(x)$ for all $t\in\R$, $x\in\R$.
\end{lem}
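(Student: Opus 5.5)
The plan is to split $\R$ at the zero $x_0(t)$ of $\psi^{\varepsilon}(t,\cdot)$ and compute $\mathcal{A}_u$ directly on each side, using the profile equation \eqref{ignit-TV} to cancel the second-order and time-derivative terms. Throughout, $b>(1+\frac{\bar r}{\underline r})\chi\mu$ and the speed condition of Theorem \ref{forced-wave} are in force. The role of the first condition is that $r(t,\infty)-\frac{\chi\mu\bar r}{b-\chi\mu}\ge \underline r-\frac{\chi\mu \bar r}{b-\chi\mu}>0$, so that $f_\varepsilon$, $q^\varepsilon$ and $\psi^\varepsilon$ are well defined for $0<\varepsilon\ll1$.

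\emph{Region $x<x_0(t)$.} Since $\psi^\varepsilon_x>0$, we have $\psi^\varepsilon(t,x)<0$ there, so $U^-\equiv 0$ near $(t,x)$. Then $\mathcal{A}_u(U^-)=\mathcal{A}_u(0)=0\ge 0$.

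\emph{Region $x>x_0(t)$.} Here $U^-=\psi^\varepsilon>0$, so $f_\varepsilon(t,\psi^\varepsilon)=\psi^\varepsilon\bigl(r(t,\infty)-\varepsilon-\frac{\chi\mu\bar r}{b-\chi\mu}-(b-\chi\mu)\psi^\varepsilon\bigr)$. Substituting $\psi^\varepsilon_t=\psi^\varepsilon_{xx}-c_\varepsilon\psi^\varepsilon_x+f_\varepsilon(t,\psi^\varepsilon)$ into \eqref{A-equ}, the $\psi_{xx}$ terms cancel, and so do the $-(b-\chi\mu)(\psi^\varepsilon)^2$ terms. This leaves
$$
\mathcal{A}_u(\psi^\varepsilon)=\bigl(c+c_\varepsilon-\chi\Psi_x(t,x;u)\bigr)\psi^\varepsilon_x+\psi^\varepsilon\Bigl(r(t,x)-r(t,\infty)+\varepsilon+\frac{\chi\mu\bar r}{b-\chi\mu}-\chi\lambda\Psi(t,x;u)\Bigr).
$$
Since $u\in\mathcal{E}^+$ gives $0\le u\le \frac{\bar r}{b-\chi\mu}$, Lemma \ref{V-estimat} yields $\chi\lambda\Psi\le \frac{\chi\mu\bar r}{b-\chi\mu}$ and $\chi\Psi_x\le \frac{\chi\mu\bar r}{2\sqrt\lambda(b-\chi\mu)}$.

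For the zeroth-order bracket, use the monotonicity of $r$ in $x$ from {\bf (H1)} together with the normalization $r(t,x_0(t))\ge r(t,\infty)-\varepsilon$. These give $r(t,x)\ge r(t,\infty)-\varepsilon$ for $x>x_0(t)$, so the bracket is nonnegative.

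For the drift coefficient, apply Lemma \ref{c-vare-limit}: $c_\varepsilon\to 2\sqrt{\frac1\omega\int_0^\omega(r(t,\infty)-\frac{\chi\mu\bar r}{b-\chi\mu})dt}$. The speed hypothesis of Theorem \ref{forced-wave} is a strict inequality, so for $\varepsilon$ small enough $c+c_\varepsilon>\frac{\chi\mu\bar r}{2\sqrt\lambda(b-\chi\mu)}\ge\chi\Psi_x$. Together with $\psi^\varepsilon_x>0$, this makes the first term nonnegative, and hence $\mathcal{A}_u(U^-)\ge0$. The main point to get right is this choice of $\varepsilon$: it must be made uniformly in $u\in\mathcal{E}^+$. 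That is automatic, because the bounds from Lemma \ref{V-estimat} do not depend on $u$.

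\emph{The inequality $U^-<U^+$.} For $x\le x_0(t)$ we have $U^-=0<U^+(x)$. For $x>x_0(t)$ we have $x>x_1$, so $U^+(x)=\frac{\bar r}{b-\chi\mu}$. By monotonicity of $\psi^\varepsilon$ in $x$ and its limit at $+\infty$, $\psi^\varepsilon(t,x)<q^\varepsilon(t)$. Comparing \eqref{q-periodic} with the constant $\frac{\bar r-\varepsilon}{b-\chi\mu}$, which is a strict supersolution of that ODE, the periodic solution satisfies $q^\varepsilon(t)<\frac{\bar r}{b-\chi\mu}$. This gives $U^-<U^+$ everywhere.
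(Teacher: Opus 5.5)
Your proposal is correct and follows essentially the same route as the paper. You split at $x_0(t)$, cancel the second-order and time-derivative terms via \eqref{ignit-TV}, and bound $\chi\lambda\Psi$ and $\chi\Psi_x$ uniformly in $u$ by Lemma \ref{V-estimat}; you then use the normalization $r(t,x_0(t))\ge r(t,\infty)-\varepsilon$ with monotonicity of $r$, and choose $\varepsilon$ via Lemma \ref{c-vare-limit} so that $c+c_\varepsilon>\frac{\chi\mu\bar r}{2\sqrt{\lambda}(b-\chi\mu)}$. Your argument $\psi^\varepsilon<q^\varepsilon<\frac{\bar r}{b-\chi\mu}$ for $U^-<U^+$ usefully fills in a step the paper only calls ``clear'' from $x_1<x_0(t)$.
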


\begin{proof}
Let $\delta =c-\frac{\chi\mu \bar r }{2\sqrt{\lambda}(b-\chi\mu)}+2\sqrt{\frac{1}{\omega}\int_{0}^{\omega}(r(t, \infty)-\frac{\chi\mu\bar r}{b-\chi\mu})dt}>0$.
By Lemma \ref{c-vare-limit}, for $0<\varepsilon\ll 1$, $c_{\varepsilon}>2\sqrt{\frac{1}{\omega}\int_{0}^{\omega}(r(t, \infty)-\frac{\chi\mu\bar r}{b-\chi\mu})dt}-\frac{\delta}{2}$.

Fix such an $\varepsilon$. For every $u\in \mathcal{E}^+$. If $x>x_0(t)$, $U^{-}(t, x)=\psi^{\varepsilon}(t, x)>0$ and $\p_{x}U^-(t, x)>0$ for any $t\in\R$. By Lemma \ref{V-estimat} and \eqref{ignit-TV}, we have
\begin{align*}
&\mathcal{A}_u(\psi^{\varepsilon}(\cdot, \cdot))(t, x)\cr
&=\p_{xx}\psi^{\varepsilon}+(c-\chi\p_{x}\Psi(t, x;u))\p_{x}\psi^\varepsilon+\psi^{\varepsilon}(r(t,x)-\chi\lambda\Psi(t, x;u)-(b-\chi\mu)\psi^{\varepsilon})-\p_{t}\psi^{\varepsilon}\cr
&\ge \p_{xx}\psi^{\varepsilon}-c_{\varepsilon}\p_{x}\psi^{\varepsilon}+(c-\chi\p_{x}\Psi(t, x;u)+c_{\varepsilon})\p_{x}\psi^\varepsilon\cr
&\,\,\,\,\,\,+\psi^{\varepsilon}(r(t,\infty)-\varepsilon-\chi\lambda\Psi(t, x;u)-(b-\chi\mu)\psi^{\varepsilon})-\p_{t}\psi^{\varepsilon}\cr
&\geq \p_{xx}\psi^{\varepsilon}-c_{\varepsilon}\p_{x}\psi^{\varepsilon}+(c-\frac{\chi\mu \bar r }{2\sqrt{\lambda}(b-\chi\mu)}+2\sqrt{\frac{1}{\omega}\int_{0}^{\omega}(r(t, \infty)-\frac{\chi\mu\bar r}{b-\chi\mu})dt}-\frac{\delta}{2})\p_{x}\psi^\varepsilon\cr
&\,\,\,\,\,\,+\psi^{\varepsilon}(r(t, \infty)-\varepsilon-\frac{\chi\mu\bar r}{b-\chi\mu}-(b-\chi\mu)\psi^{\varepsilon})-\p_{t}\psi^{\varepsilon}\cr
&=(c-\frac{\chi\mu \bar r }{2\sqrt{\lambda}(b-\chi\mu)}+2\sqrt{\frac{1}{\omega}\int_{0}^{\omega}(r(t, \infty)-\frac{\chi\mu\bar r}{b-\chi\mu})dt}-\frac{\delta}{2})\p_{x}\psi^\varepsilon\cr
&> 0.
\end{align*}
If $x<x_0(t)$, $U^{-}(t, x)=0$. Then $\mathcal{A}_u(U^-)(t, x)=0$.

Since $x_1<x_0(t)$ for $t\in\R$, it is clear that $U^{-}(t, x)<U^{+}(x)$ for $t\in\R$, $x\in\R$. 
\end{proof}

%\textbf{(W.S. May consider
%$$
%f_{\epsilon,M}=u(r^*-\epsilon-M u^{\frac{1}{p}}-M u).
%$$
%Also consider
%\begin{equation*}
%\begin{cases}
%u_t=u_{xx}+\tilde c u_x+ f_{\varepsilon,M}(u)\cr
%u(t,0)=0
%\end{cases}
%\end{equation*}
%For $-2\sqrt{r^*-\epsilon}<\tilde c<0$, this equation has a stationary solution
%$u_{\epsilon,M}(x)$ which is monotone increasing. Choose $M\gg 1$. Then
%$$
%u_{\epsilon,M}(x)\ll 1.
%$$
%For given $c>-2\sqrt {r^*}$, choose $-2\sqrt{r^*-\epsilon}<\tilde c<\min\{c,0\}$ and $\epsilon\ll 1$, $M\gg 1$.
%Prove for any $u_0(x)$ with $u_{\epsilon,M}\le u_0(x)\le U^+(x)$,
%$$
%u_{\epsilon,M}(x)\le u(t,x;u_0)\le U^+(x)\quad \forall\, t\ge 0, \,\, x\in\R,
%$$
%where $u(t,x;u_0)$ is the solution of
%$$
%u_t=u_{xx}+(c-\chi v_x) u_x+ u\big( r(x)-\chi \lambda v-(b-\chi\mu)u\big),\quad x\in\R
%$$
%and
%$$
%v_{xx}-\lambda v+\mu u=0.
%$$
%Then use fixed point theorem to prove that there is $U^*(x)$ such that
%$$
%u_{\epsilon,M}\le U^*(x)\le U^+(x)
%$$
%and $U^*(x)$ is the forced wave solution.)
%}

 Let
$$
\mathcal{E}:=\{u\in \mathcal{E}^{+} : U^-(t, x)\leq u(t, x)\leq U^{+}(x), \,\ \forall\,\ t\in\R,\,\ x\in\R\}.
$$
For any $u\in \mathcal{E}$, $t_0\in\R$,
let $U(t,x;t_0, U^{+}, u)$ be the solution of the following parabolic equation
\begin{equation}\label{U-def}
\begin{cases}
\mathcal{A}_{u}(U)=0,\quad\ t>t_0,\,\,  x\in\R,\cr
U(t_0,x;t_0, U^{+}, u)=U^{+}(x).
\end{cases}
\end{equation}

\begin{lem}\label{limit-of-U-lm-1}
For any $u\in \mathcal{E}$, ${{U^*(t, x;u)}}:=\lim_{t_0\to-\infty}U(t,x;t_0, U^{+}, u)$ exists for any $t\in\R$, $x\in\R$ and satisfies the following parabolic equation
\begin{equation}\label{U-elliptic-eq}
\p_{t}U=\p_{xx}U+(c-\chi\p_{x}\Psi(\cdot,\cdot;u))\p_{x}U+(r(t, x)-\chi\lambda\Psi(\cdot,\cdot;u)-(b-\chi\mu)U)U, \,\ \forall\,\ (t, x)\in\R\times\R.
\end{equation}
Moreover, ${U^*(\cdot, \cdot;u)}\in\mathcal{E}$.
\end{lem}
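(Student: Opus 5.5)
The argument is a monotone-limit construction. For fixed $u\in\mathcal{E}$ the operator $\mathcal{A}_u$ is a scalar parabolic operator of KPP type. Its coefficients are $\omega$-periodic in $t$, because $u$ is $\omega$-periodic and hence so is $\Psi(\cdot,\cdot;u)$, and they are Hölder continuous and bounded by Lemma \ref{V-estimat}. So the comparison principle applies to \eqref{U-def}.

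\textbf{Step 1: upper bound.} By Lemma \ref{super-solu-lm}, $U^+(x)=\min\{\frac{\bar r}{b-\chi\mu},\frac{\bar r}{b-\chi\mu}e^{\theta(x-x_1)}\}$ is a generalized supersolution of $\mathcal{A}_u(U)=0$. The constant is a supersolution everywhere and the exponential is one on $(-\infty,x_1)$; the two pieces cross exactly at $x=x_1$, and a minimum of supersolutions is a supersolution in the viscosity/weak sense. Since the initial datum is $U^+$ itself, we get
$$U(t,x;t_0,U^+,u)\le U^+(x)\quad\text{for all } t\ge t_0.$$

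\textbf{Step 2: monotonicity in $t_0$.} For $t_0'<t_0$, Step 1 gives $U(t_0,x;t_0',U^+,u)\le U^+(x)=U(t_0,x;t_0,U^+,u)$. Comparison then yields
$$U(t,x;t_0',U^+,u)\le U(t,x;t_0,U^+,u)\quad\text{for } t\ge t_0,$$
so the family is nonincreasing as $t_0\to-\infty$.

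\textbf{Step 3: lower bound.} By Lemma \ref{subsolution}, $U^-=\max\{\psi^\varepsilon,0\}$ is a generalized subsolution: it is a maximum of the subsolution $\psi^\varepsilon$ (on $x>x_0(t)$) and the solution $0$, so the kink at $x_0(t)$ is allowed. It satisfies $U^-(t_0,\cdot)<U^+$. Comparison gives $U(t,x;t_0,U^+,u)\ge U^-(t,x)$ for all $t\ge t_0$. Together with Step 2, the pointwise limit $U^*(t,x;u)=\lim_{t_0\to-\infty}U(t,x;t_0,U^+,u)$ exists and satisfies $U^-\le U^*\le U^+$.

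\textbf{Step 4: regularity and the limit equation.} The solutions are uniformly bounded and the coefficients are Hölder continuous. Interior parabolic Schauder estimates therefore give uniform $C^{1+\alpha/2,2+\alpha}_{\rm loc}$ bounds on $[t_0+1,\infty)\times\R$. By Arzelà–Ascoli and monotonicity, the whole family converges locally in $C^{1,2}$, so $U^*$ solves \eqref{U-elliptic-eq} on $\R\times\R$. The same estimates, applied globally, give uniform continuity of $U^*(t,\cdot)$ in $x$ and continuity of $t\mapsto U^*(t,\cdot)$ in $C^b_{\rm unif}(\R)$.

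\textbf{Step 5: periodicity (main obstacle).} Since the coefficients of $\mathcal{A}_u$ are $\omega$-periodic and $U^+$ is time independent, uniqueness for the Cauchy problem gives
$$U(t+\omega,x;t_0+\omega,U^+,u)=U(t,x;t_0,U^+,u).$$
Letting $t_0\to-\infty$, we have $U^*(t+\omega,x;u)=\lim_{t_0}U(t+\omega,x;t_0+\omega,U^+,u)=U^*(t,x;u)$, because $t_0+\omega\to-\infty$ as well. This puts $U^*$ in $\mathcal{E}^+$ and, with Step 3, in $\mathcal{E}$.

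The main care is needed in Steps 1 and 3. The comparison principle must be justified for the nonsmooth super- and subsolutions $U^\pm$ on the whole line with bounded coefficients. The standard way is to check the corner conditions: at $x_1$ the minimum of two supersolutions has a concave kink, and at $x_0(t)$ the maximum of two subsolutions has a convex kink. These are the admissible signs, and together with boundedness they allow a Phragmén–Lindelöf type maximum principle.
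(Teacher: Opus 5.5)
Your proposal is correct and follows the paper's proof essentially step by step. The matching steps are: the upper bound $U(t,x;t_0,U^+,u)\le U^+$ from Lemma \ref{super-solu-lm}; monotonicity in $t_0$ by comparing with $U^+=U(t_0,\cdot;t_0,U^+,u)$; the lower bound $U^-\le U$ from Lemma \ref{subsolution}; passage to the limit via a priori estimates; and $\omega$-periodicity via the shift $t_0\mapsto t_0+\omega$. Your corner-condition discussion at $x_1$ and $x_0(t)$ only makes explicit what the paper leaves to ``the comparison principle.''

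One small correction to Step 4. Lemma \ref{V-estimat} gives only bounds, and $u\in\mathcal{E}$ is merely continuous in $t$, so $\Psi(\cdot,\cdot;u)$ and $\partial_x\Psi(\cdot,\cdot;u)$ need not be H\"older in $t$. You should therefore pass to the limit with $W^{1,2}_p$ interior estimates, which need only bounded lower-order coefficients, rather than Schauder estimates; the paper leaves this point implicit as well.
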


\begin{proof}
First, thanks to  Lemma \ref{super-solu-lm},  it follows from the comparison principle that
$$
U(t,x;t_0, U^{+}, u)\leq U^+(x),\quad \forall \,\, x\in\R,\ t\geq t_0, \ u\in\mathcal{E}.
$$
Hence, for any $t_2<t_1$,
$$
U(t_1,x;t_2, U^{+}, u)\leq U^+(x)=U(t_1,x;t_1, U^{+}, u),\quad \forall \,\, x\in\R, \ u\in\mathcal{E}.
$$
By the comparison principle again, we then have
$$
U(t,x;t_2, U^{+}, u)\leq U(t,x;t_1, U^{+}, u),\quad \forall \,\, x\in\R,\,\ t\geq t_1, \ u\in\mathcal{E}.
$$
Thus, for every $u\in\mathcal{E}$, $U(t,x;t_0, U^{+}, u)$ is non-decreasing in $t_0\in\R$. Then for each $u\in\mathcal{E}$, there is $U^*(t, x;u)$ such that
\begin{equation}\label{U^*-main-def}
U^*(t, x;u):=\lim_{t_0\to-\infty}U(t,x;t_0, U^{+}, u),\quad \forall\ x\in\R,\,\ t\in\R.
\end{equation}
is well defined.
Moreover, by a priori  estimates for parabolic equations, it is not difficult to see that $U^*(t,x;u)$ satisfies \eqref{U-elliptic-eq}. In view of  the time periodicity of $r(t,x)$, we obtain
\begin{align*}
U^*(t, x;u)&=\lim_{t_0\to-\infty}U(t,x;t_0, U^{+}, u)\cr
&=\lim_{n\to\infty}U(t,x;-(n+1)\omega, U^{+}, u)\cr
%&=\lim_{n\to\infty}U(t+n\omega,x;0, U^{+}, u)\cr
%&=\lim_{n\to\infty}U(t+(n+1)\omega,x;0, U^{+}, u)\cr
&=\lim_{n\to\infty}U(t+\omega,x;-n\omega, U^{+}, u)\cr
&=U^*(t+\omega, x;u).
\end{align*}

Next, it follows from Lemma \ref{subsolution} and the comparison principle that
\begin{equation}\label{lower-bound of U}
U^{-}(t, x)\leq U(t,x;t_0, U^{+}, u),\quad \quad \forall \,\,x\in\R,\ t\ge t_{0}, \ u\in\mathcal{E}.
\end{equation}
Hence,
\begin{equation}
U^{-}(t, x)\leq { U^*(t, x;u)}\leq U^{+}(x),\quad \forall\ x\in\R,\ t\in\R,\ u\in\mathcal{E}.
\end{equation}
 Therefore, $U^*(\cdot, \cdot;u)\in\mathcal{E}$.
\end{proof}

\begin{lem}
\label{u-star-star-lm1}
For any $u\in\mathcal{E}$, suppose that $U_{*}(t,x;u)$ is also a solution of \eqref{U-elliptic-eq} in $\mathcal{E}$.
Then
\begin{equation*}
\label{new-add-eq1}
\lim_{x\to\infty} \frac{U_{*}(t,x;u)}{U^*(t,x;u)}=1
\end{equation*}
uniformly in $t\in\R$.
\end{lem}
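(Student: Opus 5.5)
We argue by contradiction, compactness, and the strong maximum principle applied to limit equations at $x=+\infty$. Fix $u\in\mathcal{E}$. As $x\to\infty$ we have $r(t,x)\to r(t,\infty)$ uniformly in $t$, and $U^-(t,x)=\psi^\varepsilon(t,x)\to q^\varepsilon(t)>0$. Hence both $U_*$ and $U^*$ are bounded below by a positive constant for $x$ large, uniformly in $t$. Both are also bounded above by $\frac{\bar r}{b-\chi\mu}$.

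Since $U^*$ is the limit from $U^+$, the comparison principle gives $U_*\le U^*$. Indeed, $U_*(t_0,\cdot)\le U^+$ for every $t_0$, so $U_*(t,\cdot)\le U(t,\cdot;t_0,U^+,u)$ for $t\ge t_0$; letting $t_0\to-\infty$ yields the claim. Therefore it suffices to show
$$\liminf_{x\to\infty}\inf_{t\in\R}\frac{U_*(t,x;u)}{U^*(t,x;u)}\ge 1.$$

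Suppose not. Then there are $x_n\to\infty$ and $t_n$ with $U_*(t_n,x_n)/U^*(t_n,x_n)\le 1-\delta_0$. By $\omega$-periodicity we may take $t_n\in[0,\omega]$ and $t_n\to t^*$. Set
$$U_n(t,x)=U_*(t+t_n,x+x_n),\qquad U^n(t,x)=U^*(t+t_n,x+x_n),$$
$$a_n(t,x)=\chi\p_x\Psi(t+t_n,x+x_n;u),\qquad b_n(t,x)=\chi\lambda\Psi(t+t_n,x+x_n;u).$$
The coefficients $a_n,b_n$ are bounded and Hölder continuous uniformly in $n$, by Lemma~\ref{V-estimat} and the representation in Lemma~\ref{new-lm1}, because $u$ is bounded. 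Parabolic estimates then give, along a subsequence, $U_n\to \hat U_*$, $U^n\to\hat U^*$, $a_n\to a$, $b_n\to \beta$ locally uniformly. The limits $\hat U_*$ and $\hat U^*$ are entire solutions, $\omega$-periodic in $t$, of
$$W_t=W_{xx}+(c-a)W_x+\bigl(r(t+t^*,\infty)-\beta-(b-\chi\mu)W\bigr)W .$$
They satisfy $\inf\hat U_*>0$, $\hat U_*\le\hat U^*$, and $\hat U_*(0,0)\le(1-\delta_0)\hat U^*(0,0)$.

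To reach a contradiction, let $\rho^*=\inf_{(t,x)}\hat U_*/\hat U^*\in(0,1-\delta_0]$; this is positive because $\hat U_*$ is bounded below and $\hat U^*$ is bounded above. The function $\rho^*\hat U^*$ is a strict subsolution of the limit equation: because $\rho^*<1$,
$$-(b-\chi\mu)(\rho^*\hat U^*)^2>-(b-\chi\mu)\rho^*(\hat U^*)^2 .$$
Hence $\rho^*\hat U^*\le\hat U_*$ with equality approached. If the infimum is attained, the strong maximum principle applied to the difference gives a contradiction.

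The main obstacle is that the infimum $\rho^*$ may not be attained, since the limit functions live on the whole line. We handle this by a second translation: take a minimizing sequence in $x$, shift it, and pass to the limit again using time periodicity and the uniform bounds. This produces a new pair of entire solutions of an equation of the same type, now with the infimum of the ratio attained at a point. There the sublinear logistic structure, $(b-\chi\mu)>0$ with strict inequality for $\rho^*<1$, combined with the strong maximum principle, yields the contradiction. Therefore $\rho^*=1$, which contradicts $\rho^*\le1-\delta_0$ and proves the limit is uniform in $t$.
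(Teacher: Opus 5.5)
Your proof is correct, but it closes the argument differently from the paper.

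The first half is the same in both. You argue by contradiction, translate along $x_n\to\infty$ with $t_n$ reduced modulo $\omega$, and pass to two bounded, uniformly positive, $\omega$-periodic entire solutions $\underline U_*\le\overline U^*$ of the limit equation with $r(t+t^*,\infty)$.

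\textbf{Paper's route.} To show these two solutions coincide, the paper uses the two-sided part metric $\rho(\underline U_*,\overline U^*)=\ln\gamma$ and proves that it strictly contracts. With $l=\inf\underline U_*>0$ and $L=\sup\overline U^*$, the $e^{\varepsilon t}$ comparison on a unit time interval gives $U(1+t_0,\cdot;t_0,\gamma\overline U^*,u)\le e^{-\varepsilon_0}\gamma\,\overline U^*(1+t_0,\cdot)$ uniformly in $x$ and $t_0$, and the symmetric bound holds as well. Hence $\gamma$ can be replaced by $e^{-\varepsilon_0}\gamma$, contradicting its minimality. The improvement is obtained globally through the Cauchy problem, so no extremum ever has to be attained.

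\textbf{Your route.} You use the one-sided quantity $\rho^*=\inf\hat U_*/\hat U^*$. You observe that $\rho^*\hat U^*$ is a strict subsolution when $\rho^*<1$, with defect $-(b-\chi\mu)(1-\rho^*)\rho^*(\hat U^*)^2<0$. You recover attainment of the infimum by a second translation along a minimizing sequence. You then conclude with a touching-point argument: at the minimum of $D=\tilde U_*-\rho^*\tilde U^*$ one has $D_t=D_x=0$ and $D_{xx}\ge 0$, the reaction terms agree, and the strict subsolution inequality forces $D_t>0$.

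\textbf{Comparison.} Your route is more elementary. It needs only a pointwise maximum-principle argument at a single point, together with the ordering $U_*\le U^*$, at the price of an extra compactness step. The paper's route is symmetric and avoids any attainment issue altogether.

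Your second translation can be dispensed with. Choose the original sequence to realize $\liminf_{x\to\infty}\inf_{t}U_*/U^*$. The limiting ratio is then bounded below by that value everywhere, with equality at the origin.
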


\begin{proof}
First of all,  Lemma \ref{V-estimat} implies that  for any $u\in\mathcal{E}$, 
$$\sup_{t\in\R, x\in\R}|\Psi(t,x;u)|<\infty\quad {\rm and} \quad \sup_{t\in\R, x\in\R}|\frac{\p}{\p x}\Psi(x;u)|<\infty.$$
Since  $\Psi_{xx}(t,x;u)=\lambda \Psi(t,x;u)-\mu u(t,x)$, it follows that  for any $u\in\mathcal{E}$, 
$$\sup_{t\in\R, x\in\R}|\Psi_{xx}(t,x;u)|<\infty.$$ 
This implies that, without loss of generality, there are sequences $\{x_n\}_{n=1}^{\infty}\subset\R$, $\{t_n\}_{n=1}^{\infty}\subset\R$ such that $\lim_{n\to\infty} \Psi(t+t_{n}, x+x_{n};u)$ and $\lim_{n\to\infty}\Psi_x(t+t_{n}, x+x_{n};u)$  exist locally uniformly on $\R\times\R$.

Since  $\frac{U_{*}(t,x;u)}{U^*(t,x;u)}\le 1$ for all $t\in\R$, $x\in\R$,  it suffices to prove that
$$\liminf_{x\to \infty}\sup_{t\in\R}\frac{U_{*}(t,x;u)}{U^*(t,x;u)}\ge 1.$$ 
Assume, by contraction,  that
$$\liminf_{x\to \infty}\sup_{t\in\R}\frac{U_{*}(t,x;u)}{U^*(t,x;u)}<1.
$$
Then there are $0<\delta<1$ and  $x_n\to\infty$, $t_{n}\in\R$ such that
$$
\frac{U_{*}(t_{n}, x_n;u)}{U^*(t_{n}, x_n;u)}\le 1-\delta \quad \forall\, \, n=1,2,\cdots.
$$
Let
$$
U_{n,*}(t, x;u)=U_{*}(t+t_{n}, x+x_n;u),\,\ U_{n}^*(t, x;u)=U^*(t+t_{n}, x+x_n;u),
$$ 
and $ \Psi_n(t, x;u)=\Psi(t+t_{n}, x+x_n;u)$.
Without loss of generality, we may assume that $$\lim_{n\to\infty}(t_n-[\frac{t_n}{\omega}]\omega)=t^*$$
 and there are $\underline{U}_*(t, x;u)$, $\overline{U}^*(t, x;u)$ and $\Psi^*(t,x;u)$ such that
$$
\lim_{n\to \infty} U_{n,*}(t, x;u)=\underline{U}_*(t,x;u),\quad \lim_{n\to\infty}U_{n}^*(t,x;u)=\overline{U}^*(t,x;u),
$$
and $\lim_{n\to\infty}\Psi_n(t,x;u)=\Psi^*(t,x;u)$
locally uniformly on $\R\times\R$.
This implies that both $\underline{U}_*(t,x;u)$ and $\overline{U}^*(t,x;u)$ are solutions of the following equation
\begin{equation}\label{U-star-elliptic-eq}
U_{t}=U_{xx} +(c -\chi\Psi^*_x(t,x;u))U_x+(r(t+t^*, \infty)-\chi\lambda\Psi^*(t,x;u)-(b-\chi\mu)U)U.
\end{equation}

We now claim that $\underline{U}_*(t,x;u)\equiv\overline{U}^*(t,x;u)$. Indeed, since
$$
0<\inf_{(t, x)\in\R\times\R}\underline{U}_*(t,x;u)\le \inf_{(t, x)\in\R\times\R} \overline{U}^*(t,x;u)
$$
and
$$ \sup_{(t, x)\in\R\times\R}\underline{U}_*(t,x;u)\le \sup_{(t, x)\in\R\times\R}\overline{U}^*(t,x;u)<\infty,
$$
it follows  that the set 
 $$
\left \{\gamma\geq1 : \frac{1}{\gamma} \overline{U}^*(t, x;u)\le \underline{U}_*(t,x;u)\le \gamma \overline{U}^*(t,x;u),\quad \forall\,\,(t, x)\in\R\times\R\right\}
 $$
 is not empty.  Thus,  we can define
 $$
 \rho(\underline{U}_*,\overline{U}^*):=\inf\left\{\ln \gamma : \frac{1}{\gamma} \overline{U}^*(t,x;u)\le \underline{U}_*(t,x;u)\le \gamma \overline{U}^*(t,x;u), \quad \forall\,\,(t, x)\in\R\times\R \right\}.
$$
%Moreover, there is $\gamma\geq 1$ such that
%$$
%\rho(\underline{U}_*,\overline{U}^*)=\ln \gamma\,\ {\rm and}\,\, \frac{1}{\gamma} \overline{U}^*(x;u)\le \underline{U}_*(x;u)\le \gamma %\overline{U}^*(x;u)\quad \forall\,\, x\in\R.
%$$
Note that $\rho(\underline{U}_*,\overline{U}^*)$ is the so called {\it part metric} between $\underline{U}_*$ and $\overline{U}^*$.
Assume that $\rho(\underline{U}_*,\overline{U}^*)>0$. 
Then, there is $\gamma>1$ such that
$$
\rho(\underline{U}_*,\overline{U}^*)=\ln \gamma\,\ {\rm and}\,\,\, \frac{1}{\gamma} \overline{U}^*(t,x;u)\le \underline{U}_*(t,x;u)\le \gamma \overline{U}^*(t,x;u)\quad \forall\,\, x\in\R,\, t\in\R.
$$
Note that
$$
(\gamma \overline{U}^*)_{t}>(\gamma \overline{U}^*)_{xx}+(c -\chi\Psi^*_x(t,x;u))(\gamma \overline{U}^*)_{x}+(r(t+t^*, \infty)-\chi\lambda\Psi^*(t,x;u)-(b-\chi\mu)(\gamma \overline{U}^*))(\gamma \overline{U}^*),
$$
$$
(\gamma \underline{U}_*)_{t}>(\gamma \underline{U}_*)_{xx}+(c -\chi\Psi^*_x(t,x;u))(\gamma \underline{U}_*)_{x}+( r(t+t^*, \infty)-\chi\lambda\Psi^*(t,x;u)-(b-\chi\mu)(\gamma \underline{U}_*))(\gamma \underline{U}_*).
$$
By the comparison principle, we have
\begin{equation}\label{underU-overU-compare}
\begin{cases}
\underline{U}_*(t+t_{0},x;u)\leq U(t+t_0,x;t_0,\gamma \overline{U}^*,u)<\gamma \overline{U}^*(t+t_{0},x;u)\quad \forall\,x\in\R,\, t>0,t_0\in\R\cr
\overline{U}^*(t+t_{0}, x;u)\leq U(t+t_{0},x;t_0,\gamma \underline{U}_*,u)<\gamma \underline{U}_*(t+t_{0},x;u)\quad \forall\,x\in\R,\, t>0,t_0\in\R
\end{cases}
\end{equation}
where $U(t+t_0,x;t_0,\gamma \overline{U}^*,u)$ is the solution of
\begin{equation}\label{New-U-def}
\begin{cases}
U_t=U_{xx} +(c -\chi\Psi^*_x(t+t_0,x;u))U_x\cr
\quad \quad +( r(t+t_0+t^*,\infty)-\chi\lambda\Psi^*(t+t_0, x;u)-(b-\chi\mu)U)U,\quad t>0,\,\,  x\in\R,\cr
U(t_0,x;t_0,\gamma \overline{U}^*,u)=\gamma \overline{U}^*(t_0,x;u).
\end{cases}
\end{equation}
Let $l=\min\{\inf_{(t,x)\in\R\times\R}\overline{U}^*(t,x;u), \inf_{(t,x)\in\R\times\R}\underline{U}_*(t,x;u)\}=\inf_{(t,x)\in\R\times\R}\underline{U}_*(t,x;u)>0.$ It then follows  that
\begin{align}\label{overline{U}^*}
(\gamma \overline{U}^*)_{t}=&(\gamma \overline{U}^*)_{xx}+(c -\chi\Psi^*_x(t,x;u))(\gamma \overline{U}^*)_{x}+({ r(t+t^*,\infty)}-\chi\lambda\Psi^*(t,x;u)-(b-\chi\mu)\overline{U}^*)(\gamma \overline{U}^*)\cr
=&(\gamma \overline{U}^*)_{xx}+(c -\chi\Psi^*_x(t,x;u))(\gamma \overline{U}^*)_{x}+({ r(t+t^*,\infty)}-\chi\lambda\Psi^*(t,x;u)-(b-\chi\mu)\gamma\overline{U}^*)(\gamma \overline{U}^*)\cr
&+(b-\chi\mu)(\gamma-1)\overline{U}^*(\gamma \overline{U}^*)\cr
\geq & (\gamma \overline{U}^*)_{xx}+(c -\chi\Psi^*_x(t,x;u))(\gamma \overline{U}^*)_{x}+({ r(t+t^*,\infty)}-\chi\lambda\Psi^*(t,x;u)-(b-\chi\mu)\gamma\overline{U}^*)(\gamma \overline{U}^*)\cr
&+(b-\chi\mu)(\gamma-1)l(\gamma \overline{U}^*).
\end{align}
Similarly, we have
\begin{align}
(\gamma \underline{U}_*)_{t}\geq &(\gamma \underline{U}_*)_{xx}+(c -\chi\Psi^*_x(t,x;u))(\gamma \underline{U}_*)_{x}\nonumber\\
&+( r(t+t^*,\infty)-\chi\lambda\Psi^*(t,x;u)-(b-\chi\mu)\gamma\underline{U}_*)(\gamma \underline{U}_*)
+(b-\chi\mu)(\gamma-1)l(\gamma \underline{U}_*).
\end{align}
For any $\varepsilon>0$, set $w^*(t,x)=e^{\varepsilon t}U(t+t_0,x;t_0,\gamma \overline{U}^*,u)$. It follows from \eqref{New-U-def} that
\begin{align*}
w^*_{t}=&\varepsilon w^*+e^{\varepsilon t}U_{t}(t+t_0,x;t_0,\gamma \overline{U}^*,u)\cr
=&\varepsilon w^*+w^*_{xx}+(c -\chi\Psi^*_x(t+t_0, x;u))w^*_{x}\cr
&+(r(t+t_0+t^*,\infty)-\chi\lambda\Psi^*(t+t_0, x;u)-(b-\chi\mu)w^*)w^*\cr
&+(b-\chi\mu)(e^{\varepsilon t}-1)U(t+t_0,x;t_0,\gamma \overline{U}^*,u)w^*\cr
\leq &\varepsilon w^*+w^*_{xx}+(c -\chi\Psi^*_x(t+t_0, x;u))w^*_{x}\cr
&+( r(t+t_0+t^*,\infty)-\chi\lambda\Psi^*(t+t_{0},x;u)-(b-\chi\mu)w^*)w^*\cr
&+(b-\chi\mu)(e^{\varepsilon t}-1)\gamma \overline{U}^*(t+t_0,x;u)w^*\cr
\leq &w^*_{xx}+(c -\chi\Psi^*_x(t+t_0, x;u))w^*_{x}\cr
&+( r(t+t_0+t^*,\infty)-\chi\lambda\Psi^*(t+t_{0},x;u)-(b-\chi\mu)w^*)w^*\cr
&+(\varepsilon+(b-\chi\mu)(e^{\varepsilon t}-1)\gamma L)w^*,
\end{align*}
where $L=\max\{\sup_{(t,x)\in\R\times\R}\overline{U}^*(t,x;u),\,  \sup_{(t,x)\in\R\times\R}\underline{U}_*(t,x;u)\}=\sup_{(t,x)\in\R\times\R}\overline{U}^*(t,x;u)<\infty$.

Choose $0<\varepsilon\ll 1$ such that
$$
\varepsilon+(b-\chi\mu)(e^{\varepsilon t}-1)\gamma L<(b-\chi\mu)(\gamma-1)l \quad \forall\,\ 0\leq t\leq 1.
$$
Thus, for $x\in\R$, $t_0\in\R$ and $0\leq t\leq 1$, we have
\begin{align}\label{w^*-eq}
w^*_{t}&\leq w^*_{xx}+(c -\chi\Psi^*_x(t+t_{0},x;u))w^*_{x}\cr
&\,\,\,\,\,\, +(r(t+t_0+t^*,\infty)-\chi\lambda\Psi^*(t+t_{0}, x;u)-(b-\chi\mu)w^*)w^*\cr
&\,\,\,\,\,\,+(b-\chi\mu)(\gamma-1)lw^*
\end{align}
By the comparison principle, \eqref{overline{U}^*} and \eqref{w^*-eq},
it follows  that
$$
w^*(t,x)=e^{\varepsilon t}U(t+t_0,x;t_0,\gamma \overline{U}^*,u)\leq \gamma \overline{U}^*(t+t_0, x;u) \quad \forall\,\ x\in\R, \,\, 0\leq t\leq 1,\,\, t_0\in\R
$$
for $0<\varepsilon\ll 1$. Hence,  there is $0<\varepsilon_0\ll 1$ such that
$$
U(1+t_0,x;t_0,\gamma \overline{U}^*,u)\leq e^{-\varepsilon_0}\gamma \overline{U}^*(1+t_0, x;u)\quad \forall\,\ x\in\R,\,\ t_0\in\R.
$$
Combining this with \eqref{underU-overU-compare}, we get
\begin{equation*}\label{underline{U}_*}
\underline{U}_*(1+t_0, x;u)\leq e^{-\varepsilon_0}\gamma \overline{U}^*(1+t_0,x;u)\quad \forall\,\ x\in\R, \,\ t_0\in\R,
\end{equation*}
which implies that
\begin{equation}\label{underline{U}_*}
\underline{U}_*(t, x;u)\leq e^{-\varepsilon_0}\gamma \overline{U}^*(t,x;u)\quad \forall\,\ x\in\R, \,\ t\in\R,
\end{equation}
Set $w_*(t,x)=e^{\varepsilon t}U(t+t_{0},x;t_0,\gamma \underline{U}_*,u)$. 
By the similar procedure, we can show that
\begin{equation}\label{overline{U}_*}
\overline{U}^*(t,x;u)\leq e^{-\varepsilon_0}\gamma \underline{U}_*(t,x;u)\quad \forall\,\ x\in\R,\,\ t\in\R.
\end{equation}
In view of  \eqref{underline{U}_*} and \eqref{overline{U}_*}, it follows that
$$
\frac{1}{e^{-\varepsilon_0}\gamma}\overline{U}^*(t,x;u)\leq \underline{U}_*(t,x;u)\leq e^{-\varepsilon_0}\gamma \overline{U}^*(t,x;u)\quad \forall\,\ x\in\R,\,\ t\in\R.
$$
By the definition of  $\rho(\underline{U}_*,\overline{U}^*)$, we obtain 
$\gamma\leq e^{-\varepsilon_0}\gamma$, 
which is a contradiction. It follows that $\rho(\underline{U}_*,\overline{U}^*)=0$, and hence, $\underline{U}_*(t,x;u)=\overline{U}^*(t,x;u)$ for all $x\in\R$, $t\in\R$.
By the assumption, we then have 
$$
\underline{U}_*(0,0;u)\not=\overline{U}^*(0,0;u),
$$
a contradiction.
\end{proof}

\begin{lem}
\label{u-star-star-lm2}
For any $u\in\mathcal{E}$, $U^*(t,x;u)$ is the unique positive solution of \eqref{U-elliptic-eq} in $\mathcal{E}$.
\end{lem}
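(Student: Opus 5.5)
Fix $u\in\mathcal{E}$ and let $U_*(t,x;u)$ be any positive solution of \eqref{U-elliptic-eq} lying in $\mathcal{E}$. The plan is to show $U_*\equiv U^*(\cdot,\cdot;u)$ with a part-metric argument. It mirrors the proof of Lemma \ref{u-star-star-lm1}, but now on the whole line, using Lemma \ref{u-star-star-lm1} to control $x\to\infty$ and the exponential decay built into $U^+$ to control $x\to-\infty$.

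\textbf{Step 1: ordering.} Since $U_*\le U^+$ and $U(t,x;t_0,U^+,u)$ solves \eqref{U-def}, the comparison principle gives $U_*(t,x;u)\le U(t,x;t_0,U^+,u)$ for all $t\ge t_0$. Letting $t_0\to-\infty$ yields
$$
U_*(t,x;u)\le U^*(t,x;u)\quad\text{for all } (t,x)\in\R\times\R.
$$
So it suffices to show $\inf_{t,x}U_*/U^*=1$.

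\textbf{Step 2: argue by contradiction.} Suppose $\gamma:=\sup_{t,x}U^*/U_*>1$. We first check that $\gamma<\infty$. For $x$ large this follows from Lemma \ref{u-star-star-lm1}, and on compact sets it follows from positivity and periodicity. As $x\to-\infty$, both functions lie below $U^+$, and $U_*\ge U^-$ only on $x>x_0(t)$. Here I would compare both $U_*$ and $U^*$ with the linearization on $(-\infty,x_1)$, where $r<r_1<0$. By a Harnack inequality and the boundary Harnack/Hopf-type behavior of positive solutions of the linear equation with bounded coefficients (bounded by Lemma \ref{V-estimat}), any two positive $\omega$-periodic solutions decaying as $x\to-\infty$ should have comparable decay, i.e.\ their ratio stays bounded. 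I expect this to be the main obstacle: making the ratio bound uniform as $x\to-\infty$.

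One option is the standard trick for $\theta$-decaying solutions. On the half-line $x<x_1$, write $U^*=U_*\,w$. Then $w$ satisfies a linear equation with no zeroth-order term except $-(b-\chi\mu)(U^*-U_*)\le0$, so $w$ is a supersolution of a drift-diffusion equation. By the maximum principle on the periodic half-cylinder, $\sup w$ over $x<x_1$ is attained at $x=x_1$ or is controlled there. This gives $\gamma<\infty$.

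\textbf{Step 3: strict improvement.} With $\gamma<\infty$, apply the argument in Lemma \ref{u-star-star-lm1}. We have $\gamma U_*\ge U^*$, and $\gamma U_*$ is a strict supersolution on the region where $U_*$ is bounded below:
$$
(\gamma U_*)_t-\mathcal{L}(\gamma U_*)=(b-\chi\mu)(\gamma-1)\gamma U_*^2 .
$$
The supremum of $U^*/U_*$ cannot be approached as $x\to+\infty$ (the ratio tends to $1<\gamma$ by Lemma \ref{u-star-star-lm1}). By Step 2, it also cannot be approached only as $x\to-\infty$ unless it is attained on a compact set; there the strong maximum principle applied to $w=U^*/U_*$ gives a contradiction. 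Concretely, $w$ satisfies
$$
w_t=w_{xx}+\Big(c-\chi\Psi_x+2\frac{(U_*)_x}{U_*}\Big)w_x-(b-\chi\mu)U_*(w-1)w ,
$$
and at an interior maximum with $w=\gamma>1$ the right-hand side is negative. Combined with $\omega$-periodicity, which lets us pass to a maximizing point in a compact time interval, this is impossible.

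Hence $\gamma=1$, so $U_*\equiv U^*(\cdot,\cdot;u)$, which proves uniqueness.
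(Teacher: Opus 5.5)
Your Step 1 is fine. The closing computation in Step 3 (at an attained interior maximum $w=\gamma>1$ the right-hand side is negative) is also correct. The gap is the left tail, i.e.\ Step 2, and the argument you sketch there does not work.

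\textbf{Why Step 2 fails.} Dividing by $U_*$ removes the zeroth-order coefficient. The only such term left in the $w$-equation is $-(b-\chi\mu)U_*(w-1)w\le 0$, so $w$ is a \emph{sub}solution, not a supersolution. Moreover this term degenerates as $x\to-\infty$, because $U_*\to0$. For a drift--diffusion operator $w_t=w_{xx}+\beta w_x$ on the half-cylinder $\{x<x_1\}$ there is no maximum principle without a growth restriction at $-\infty$. For example, if $\beta$ is a positive constant, $e^{-\beta x}$ is a solution that is bounded at $x=x_1$ but unbounded as $x\to-\infty$. A positive drift is what one expects here: $c-\chi\Psi_x+2(U_*)_x/U_*$ is roughly $c+2\vartheta$ when $U_*\sim e^{\vartheta x}$. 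So the claim that $\sup_{x<x_1}w$ is controlled at $x=x_1$ presupposes a bound on $w$ at $-\infty$, which is exactly what you are trying to prove. For the same reason, your claim in Step 3 that the supremum cannot be approached only as $x\to-\infty$ is unsupported. The uniform margin $(b-\chi\mu)(\gamma-1)l$ used in Lemma \ref{u-star-star-lm1} is unavailable here, since $\inf U_*=0$.

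\textbf{The missing idea.} You need to keep the strictly negative coefficient $r\le r_1<0$ on $\{x\le x_1\}$ in play; dividing by $U_*$ throws it away.

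The paper does this with an $\epsilon$-shift. It sets $k_\epsilon=\inf\{k\ge1: kU_*\ge U^*-\epsilon\}$.
\begin{itemize}
\item The left tail becomes trivial, because $U^*-\epsilon<0$ for $x\ll-1$.
\item The right tail is handled by Lemma \ref{u-star-star-lm1}.
\item At a touching point, the two equations force $r-\chi\lambda\Psi-(b-\chi\mu)U^*\ge0$, hence $r(t_\epsilon,x_\epsilon)>0$. So the touching points stay in a compact set.
\item Letting $\epsilon\to0$ gives $k^*U_*\ge U^*$ with contact and $k^*>1$, which contradicts the strong comparison principle.
\end{itemize}

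Alternatively, you can repair your own route by working with differences instead of quotients.
\begin{itemize}
\item Since $U_*\le U^*$, the function $z=U^*-KU_*$ satisfies $z_t\le z_{xx}+(c-\chi\Psi_x)z_x+q^*z$, where $q^*=r-\chi\lambda\Psi-(b-\chi\mu)U^*\le r_1<0$ on $\{x\le x_1\}$.
\item $z\to0$ as $x\to-\infty$ uniformly in $t$, since $U^*,U_*\le U^+$. Hence a positive supremum of $z$ on $\{x<x_1\}$ would be attained, giving $0\le q^*z<0$, a contradiction.
\item Take $K=\sup_{x\ge x_1}U^*/U_*$. This is finite, and if $K>1$ it is attained, by Lemma \ref{u-star-star-lm1}, positivity and $t$-periodicity.
\item With $z\le 0$ at $x=x_1$, this yields $U^*\le KU_*$ on $\{x\le x_1\}$. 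So either $U^*\le U_*$ everywhere, or $\sup U^*/U_*>1$ is attained at an interior point of $\R\times\R$, where your Step 3 computation gives the contradiction.
\end{itemize}
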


\begin{proof}
Suppose that $U_{*}(t,x;u)$ is any positive solution of \eqref{U-elliptic-eq} in $\mathcal{E}$. It suffices to prove that
$U_{*}(t,x;u)\equiv U^*(t,x;u)$.

For any $\epsilon>0$, let
$$
K_\epsilon=\{k\ge 1 : kU_{*}(t,x;u)\ge U^*(t,x;u)-\epsilon\quad \forall \,\, x\in\R, \,\ t\in\R\}.
$$
By Lemma \ref{u-star-star-lm1} and the fact $\lim_{x\to -\infty}\sup_{t\in\R}U_{*}(t,x;u)=\lim_{x\to -\infty}\sup_{t\in\R}U^*(t,x;u)=0$, it follows that $K_\epsilon\not =\emptyset$.
Let
$$
k_\epsilon=\inf K_\epsilon.
$$
Then $k_\epsilon\ge 1$ and
\begin{equation}
\label{new-add-eq0}
k_\epsilon U_{*}(t,x;u)\ge U^*(t,x;u)-\epsilon\quad \forall\,\, x\in\R, \,\ t\in\R.
\end{equation}
For any $0<\epsilon_1<\epsilon_2$, since
$$
k_{\epsilon_1} U_{*}(t,x;u)\ge U^*(t,x;u)-\epsilon_1>U^*(t,x;u)-\epsilon_2\quad \forall\,\, x\in\R, \,\ t\in\R,
$$
it then follows that $k_{\epsilon_1}\geq k_{\epsilon_2}$. Thus, $k_\epsilon$ is nonincreasing in $\epsilon>0$.
If $k_{\epsilon}=1$ for any $\epsilon>0$, clearly, we have that ${U^*(t,x;u)}\equiv {U_{*}(t,x;u)}.$

Assume that there exists $\epsilon_0>0$ such that $k_{\epsilon_0}>1$. Then
\begin{equation}
\label{new-add-eq2}
k_{\epsilon}\geq k_{\epsilon_0}>1 \quad \text{for any}\quad  0<\epsilon\leq \epsilon_0.
\end{equation}
For any given $0<\epsilon\leq \epsilon_0$, since $k_{\epsilon}>1$, there exists $\delta>0$, such that $k_{\epsilon}-\delta>1$.
In view of Lemma \ref{u-star-star-lm1}, it follows that for such given $\epsilon>0$,
$$
\frac{U_{*}(t,x;u)}{U^*(t,x;u)}\geq 1-\frac{\epsilon}{U^*(t,x;u)}, \quad x\gg 1,\,\ t\in\R.
$$
Hence,
\begin{equation}\label{x>>1}
(k_{\epsilon}-\delta)U_{*}(t,x;u)\geq U^*(t,x;u)-\epsilon, \quad x\gg 1,\,\ t\in\R.
\end{equation}
Since
$\lim_{x\to-\infty}\frac{U^*(t,x;u)-\epsilon}{U_{*}(t,x;u)}=-\infty$ uniformly in $t\in\R$,
we have
\begin{equation}\label{x<<-1}
(k_{\epsilon}-\delta)U_{*}(t,x;u)\geq U^*(t,x;u)-\epsilon, \quad x\ll -1, \,\ t\in\R.
\end{equation}
It then follows from \eqref{x>>1}, \eqref{x<<-1} and the definition of $k_{\epsilon}$ that
there is $(t_\epsilon, x_\epsilon)\in\R\times\R$ such that
\begin{equation}
\label{new-add-eq3}
k_\epsilon U_{*}(t_\epsilon, x_\epsilon;u)=U^*(t_\epsilon, x_\epsilon;u)-\epsilon.
\end{equation}
In particular, the function $t\to k_\epsilon U_{*}(t, x_\epsilon;u)-U^*(t, x_\epsilon;u)+\epsilon$ attains
its minimum value at $t_\epsilon$, and the function $x\to k_\epsilon U_{*}(t_\epsilon, x;u)-U^*(t_\epsilon, x;u)+\epsilon$ attains its minimum value at $x_\epsilon$. Since $U_{*}$ and $U^{*}$ are periodic in $t$, without loss of generality, we may assume that $t_{\epsilon}\in [0,\omega]$. 
 By Lemma \ref{u-star-star-lm1}, $x_\epsilon$ is bounded above.

We claim that $x_\epsilon$ is bounded from below.  In fact, at $(t_\epsilon, x_\epsilon)$, we have
$$
\p_{xx}(k_\epsilon U_{*}(t_\epsilon, x_\epsilon;u)-U^*(t_\epsilon, x_\epsilon;u))\ge 0,\quad \p_x (k_\epsilon U_{*}(t_\epsilon, x_\epsilon;u)-U^*(t_\epsilon, x_\epsilon;u))=0,
$$
and
$$
\p_t (k_\epsilon U_{*}(t_\epsilon, x_\epsilon;u)-U^*(t_\epsilon, x_\epsilon;u))=0.
$$
and hence,
\begin{align*}
0&\ge  k_\epsilon U_{*}(t_\epsilon, x_\epsilon; u)(r(t_\epsilon, x_\epsilon )-\chi\lambda\Psi(t_\epsilon, x_\epsilon;u)-(b-\chi\mu)U_{*}(t_\epsilon, x_\epsilon;u))\\
&\,\ -U^*(t_\epsilon, x_\epsilon;u)(r(t_\epsilon, x_\epsilon )-\chi\lambda\Psi(t_\epsilon, x_\epsilon;u)-(b-\chi\mu)U^*(t_\epsilon, x_\epsilon; u))\\
&\ge k_\epsilon U_{*}(t_\epsilon, x_\epsilon; u)(r(t_\epsilon, x_\epsilon )-\chi\lambda\Psi(t_\epsilon, x_\epsilon;u)-(b-\chi\mu)U^*(t_\epsilon, x_\epsilon; u))\\
&\,\ -U^*(t_\epsilon, x_\epsilon;u)(r(t_\epsilon, x_\epsilon )-\chi\lambda\Psi(t_\epsilon, x_\epsilon;u)-(b-\chi\mu)U^*(t_\epsilon, x_\epsilon; u))\\
&=-\epsilon (r(t_\epsilon, x_\epsilon )-\chi\lambda\Psi(t_\epsilon, x_\epsilon;u)-(b-\chi\mu)U^*(t_\epsilon, x_\epsilon; u)).
\end{align*}
%\begin{align*}
%0&\ge  k_\epsilon U_{1*}(x_\epsilon;u)(r(x_\epsilon )-\chi\lambda\Psi(x_\epsilon;u)-(b-\chi\mu)U_{1*}(x_\epsilon;u))\\
%&-U_1^*(x_\epsilon;u)(r(x_\epsilon )-\chi\lambda\Psi(x_\epsilon;u)-(b-\chi\mu)U_1^*(x_\epsilon;u))\\
%&\ge k_\epsilon U_{1*}(x_\epsilon;u)(r(x_\epsilon )-\chi\lambda\Psi(x_\epsilon;u)-(b-\chi\mu)U_1^*(x_\epsilon;u))\\
%&-U_1^*(x_\epsilon;u)(r(x_\epsilon )-\chi\lambda\Psi(x_\epsilon;u)-(b-\chi\mu)U_1^*(x_\epsilon;u))\\
%&=-\epsilon (r(x_\epsilon )-\chi\lambda\Psi(x_\epsilon;u)-(b-\chi\mu)U_1^*(x_\epsilon;u)).
%\end{align*}
This implies that $r(t_\epsilon, x_\epsilon)\ge 0$ and hence $x_\epsilon$ is bounded from below.

Therefore,  $x_{\epsilon}$ is bounded both from below and above.
By \eqref{new-add-eq3},
$$
k_\epsilon=\frac{U^*(t_\epsilon, x_\epsilon;u)-\epsilon}{U_{*}(t_\epsilon, x_\epsilon;u)}.
$$
Hence $k_\epsilon$ is bounded,  and there is $\epsilon_n\to 0$ such that
$x_{\epsilon_n}\to x^*$, $t_{\epsilon_n}\to t^*$ and $k_{\epsilon_n}\to k^*(\ge k_{\epsilon_0}>1)$ as $n\to\infty$.
This, together with \eqref{new-add-eq3}, implies that
$k^* U_{*}(t^*, x^*;u)=U^*(t^*, x^*;u).$
By \eqref{new-add-eq0},
$$
k^* U_{*}(t,x;u)\ge U^*(t,x;u)\quad \forall\,\, t\in\R,\,\  x\in \R.
$$
Since $k^*>1$, by Lemma \ref{u-star-star-lm1},  $k^* U_{*}(0,x;u)\not \equiv U^*(0,x;u)$.
Then the comparison principle implies that
$$
U^*(t,x;u)<k^*U_{*}(t,x;u) \quad \forall\,\ t>0,\,\ x\in\R,
$$
By the periodity of $U_{*}$ and $U^{*}$ in $t$, we have 
$$
U^*(t,x;u)<k^*U_{*}(t,x;u) \quad \forall\,\ t\in\R,\,\ x\in\R,
$$
which is a contraction. Therefore, $k_\epsilon= 1$ for $0<\epsilon\ll 1$
and then
$
U_{*}(t,x;u)\ge U^*(t,x;u)
$
for all $(t,x)\in\R\times\R$. By interchanging the role of $U_{*}$ and $U^{*}$, we also have $U_{*}(t,x;u)\le U^*(t,x;u)$ for all $(t,x)\in\R\times\R$. Thus, we obtain $U_{*}(t,x;u)\equiv U^*(t,x;u)$.
\end{proof}

We now prove Theorem \ref{forced-wave}.

\begin{proof}[Proof of Theorem \ref{forced-wave}]
First of all, let $C_{\rm unif}^{b}(\R\times\R)$ be endowed with the compact-open topology, that is, a sequence of elements of $C_{\rm unif}^{b}(\R\times\R)$ converges if and only if it converges uniformly on every compact subsets of $\R\times\R$. It is clear that $\mathcal{E}$ is a closed, bounded and convex subset of $C_{\rm unif}^{b}(\R\times\R)$ in the compact-open topology. Consider the mapping $U^*: \mathcal{E}\ni u \mapsto U^*(\cdot, \cdot;u)\in\mathcal{E}$ as defined by \eqref{U^*-main-def}.
Next, we prove that the mapping $U^*$ has a fixed point. We divide the proof into two steps.

\noindent{\bf Step 1.} We prove that the mapping $U^*: \mathcal{E}\ni u \mapsto U^*(\cdot, \cdot;u)\in\mathcal{E}$ is compact.

Indeed, first of all, let $\{u_{n}\}$ be a sequence of elements in $\mathcal{E}$. Since $U^*(\cdot, \cdot;u_{n})\in\mathcal{E}$ for every $n\geq 1$, then 
\begin{equation}\label{unif-bdd-U-seq}
\sup_{t\in\R, x\in\R, n\geq 1}|U^*(t, x;u_{n})|<\frac{\bar r}{b-\chi\mu},
\end{equation}
which implies that $U^*(\cdot, \cdot;u_{n})$ is uniformly bounded in compact-open topology.

Next, let $\{T(t)\}_{t\ge 0}$ be the $C_0-$semigroup generated by $\Delta-I$ on $C^b_{\rm unif}(\R^N)$.  Since $\partial_{xx}\Psi(t, x;u)=\lambda\Psi(t, x;u)-\mu u(t,x)$, the variation of constant formula yields that for every $n\geq 1$,
\begin{equation}
\begin{split}
&U(t, \cdot;t_0,U^+,u_{n})\\
%=&T(t-t_0)U^++ \int_{t_0}^tT(t-s)(c-\chi\p_x \Psi(s,\cdot;u_{n}))\p_xU(s,\cdot;t_0,U^+,u_{n})ds \\
%&+ \int_{t_0}^tT(t-s)(r(s,x)+1-\lambda\chi \Psi(s, \cdot;u_{n})-(b-\chi\mu)U(s,\cdot;t_0,U^+,u_{n}))U(s,\cdot;t_0,U^+,u_{n})ds\\
=& \underbrace{T(t-t_0)U^+}_{I^1_{t_0, n}(t, \cdot)} + \underbrace{\int_{t_0}^tT(t-s)\partial_x((c-\chi\p_x \Psi(s,\cdot,u_{n}))U(s,\cdot;t_0,U^+,u_{n}))ds}_{I^2_{t_0, n}(t, \cdot)} \\
&+ \underbrace{\int_{t_0}^tT(t-s)(r(s, x)+1-\chi\mu u_{n}(s,\cdot))U(s,\cdot;t_0,U^+,u_{n})ds}_{I^3_{t_0, n}(t, \cdot)}\\
&-\underbrace{\int_{t_0}^tT(t-s)(b-\chi\mu)U^2(s,\cdot;t_0,U^+,u_{n})ds}_{I^4_{t_0,n}(t, \cdot)}.
\end{split}
\end{equation}
We then choose $0<\beta<\frac{1}{2}$. Let $X^{\beta}$ be the fractional power space associated with $\Delta-I$ on $C^{b}_{\rm unif}(\R^N)$.
We  claim that there is a constant $\tilde{C}_{\beta}$ independent of $t$, $n$, and $t_0$ such that
\begin{equation}\label{b-eq1}
\|U(t, \cdot;t_0, U^+,u_{n})\|_{X^{\beta}}\leq \tilde{C}_{\beta}\Big(\frac{e^{-(t-t_0)}}{(t-t_0)^{\beta}}+1\Big),\quad  \forall\,\ t>t_0.
\end{equation}
In fact,
by \cite[Inequalities (2.10), (2.12) and Lemma 3.2]{SaSh1} and Lemma \ref{V-estimat}, there is a constant $C_{\beta}>0$ such that
\begin{equation}\label{1-eq}
\|I^1_{t_0,n}(t,\cdot)\|_{X^\beta}\leq C_{\beta}(t-t_0)^{-\beta}e^{-(t-t_0)}\|U^+\|_{\infty}=\frac{C_{\beta}e^{-(t-t_0)}\bar r}{(t-t_0)^{\beta}(b-\chi\mu)},
\end{equation}
\begin{align}\label{2-eq}
\|I_{t_0,n}^2(t,\cdot)\|_{X^\beta}&\leq  C_{\beta}\int_{t_0}^{t}(\frac{t-s}{2})^{-(\frac{1}{2}+\beta)}e^{-(t-s)}(|c|
+\chi\|\p_{x}\Psi(s,\cdot;u_{n})\|_{\infty})\|U(s,\cdot;t_0,U^+,u_{n})\|_{\infty}ds\nonumber\\
&\leq C_{\beta}\int_{t_0}^{t}(\frac{t-s}{2})^{-(\frac{1}{2}+\beta)}e^{-(t-s)}(|c|+\frac{\chi\mu\bar r}{2\sqrt{\lambda}(b-\chi\mu)})\frac{\bar r}{b-\chi\mu}ds
\nonumber\\
&\leq  C_{\beta}\Big(|c|+\frac{\chi\mu\bar r}{2\sqrt{\lambda}(b-\chi\mu)}\Big)\frac{\bar r\Gamma(\frac{1}{2}-\beta)}{b-\chi\mu},
\end{align}
\begin{align}\label{3-eq}
\|I^3_{t_0,n}(t,\cdot)\|_{X^\beta}&\leq
 C_{\beta}\int_{t_0}^t(t-s)^{-\beta}e^{-(t-s)}\Big(\bar r+1+\chi\mu\|u_{n}(s,\cdot)\|_{\infty} \Big)\|U(s,\cdot;t_0,U^+,u_{n})\|_{\infty}ds\nonumber\\
&\leq  C_{\beta}\Big(\bar r+1+\frac{\chi\mu \bar r}{b-\chi\mu}\Big)\frac{\bar r\Gamma(1-\beta)}{b-\chi\mu},
\end{align}
and
\begin{align}\label{4-eq}
\|I^4_{t_0,n}(t,\cdot)\|_{X^{\beta}}&\leq C_{\beta}\int_{t_0}^t(t-s)^{-\beta}e^{-(t-s)}(b-\chi\mu)\|U(s,\cdot;t_0,U^+,u_{n})\|^{2}_{\infty}ds\cr
&\leq C_{\beta}\frac{\bar r^2\Gamma(1-\beta)}{b-\chi\mu}.
\end{align}
The claim then follows from \eqref{1-eq}--\eqref{4-eq}.

Now, we claim that there is a constant $\bar{C}_{\beta}>0$ independent of $t_0$, $n$, and $t$ such that
\begin{equation}\label{b-eq2}
\|U(t+h,\cdot,;t_0,U^+,u_{n})-U(t,\cdot,;t_0,U^+,u_{n})\|_{X^{\beta}}\leq \bar{C}_{\beta}(h^{\beta}+h^{1-\beta}+h^{\frac{1}{2}-\beta})\Big(\frac{e^{-(t-t_0)}}{(t-t_0)^{2\beta}}+1 \Big)
\end{equation}
for $h>0$ and $t>t_0$.
In fact, for $h>0$ and $t>t_0$, we have
\begin{equation}\label{1'-eq}
\|I_{t_0, n}^1(t+h,\cdot)-I_{t_0,n}^1(t, \cdot)\|_{X^\beta}=\|(T(h)-I)T(t-t_0)U^+\|_{X^{\beta}}\leq \frac{C_{\beta}\bar rh^{\beta}e^{-(t-t_0)}}{(b-\chi\mu)(t-t_0)^{2\beta}},
\end{equation}
\begin{align}\label{2'-eq}
&\|I_{t_0,n}^2(t+h, \cdot)-I_{t_0,n}^2(t, \cdot)\|_{X^\beta}\nonumber\\
&\le   C_{\beta}h^{\beta}\int_{t_0}^t\frac{e^{-(t-s)}}{(t-s)^{\frac{1}{2}+2\beta}}\Big(|c|
+\chi\|\p_{x}\Psi(s,\cdot;u_{n})\|_{\infty}\Big)\|U(s, \cdot;t_0,U^+,u_{n})\|_{\infty}ds\nonumber\\
& \,\, + C_{\beta}\int_{t}^{t+h}\frac{e^{-(t+h-s)}}{(t+h-s)^{\frac{1}{2}+\beta}}\Big(|c|
+\chi\|\p_{x}\Psi(s,\cdot;u_{n})\|_{\infty}\Big)\|U(s, \cdot;t_0,U^+,u_{n})\|_{\infty}ds\nonumber\\
&\leq  C_{\beta}\Big(h^{\beta}\Gamma(\frac{1}{2}-2\beta)+\frac{h^{\frac{1}{2}-\beta}}{\frac{1}{2}-\beta} \Big)\Big(|c|+\frac{\chi\mu\bar r}{2\sqrt{\lambda}(b-\chi\mu)}\Big)\frac{\bar r}{b-\chi\mu},
\end{align}
\begin{equation}\label{3'-eq}
\|I_{t_0,n}^3(t+h, \cdot)-I_{t_0,n}^3(t, \cdot)\|_{X^\beta}\leq  C_{\beta}\Big(h^{\beta}\Gamma(1-2\beta)+\frac{h^{1-\beta}}{1-\beta} \Big)\Big(\bar r+1+\frac{\chi\mu \bar r}{b-\chi\mu} \Big)\frac{\bar r}{b-\chi\mu},
\end{equation}
and
\begin{equation}\label{4'-eq}
\|I_{t_0,n}^4(\cdot,t+h)-I_{t_0,n}^4(\cdot,t)\|_{X^\beta}\leq   C_{\beta}\Big(h^{\beta}\Gamma(1-2\beta)+\frac{h^{1-\beta}}{1-\beta} \Big)\frac{\bar r^{2}}{b-\chi\mu}.
\end{equation}
Thus, \eqref{b-eq2} follows from \eqref{1'-eq}--\eqref{4'-eq}.

Letting $t_0\to -\infty$ in \eqref{b-eq2},
we obtain
\begin{equation}\label{b-eq2-1}
\|U^{*}(t+h,\cdot;u_{n})-U^{*}(t,\cdot;u_{n})\|_{X^{\beta}}\leq \bar{C}_{\beta}(h^{\beta}+h^{1-\beta}+h^{\frac{1}{2}-\beta}),  \ \forall\,\ h>0,\ t\in\R.
\end{equation}
Note that $X^{\beta}$ is continuously embedded in $C^{\nu}(\R)$ for $0<\nu<2\beta$ (\cite{Hen}). This together with \eqref{b-eq1} implies that there is $\hat C_\beta>0$ such that
\begin{equation}
\label{b-eq1-1}
|U^{*}(t, x+h;u_{n})-U^{*}(t,x;u_{n})|\le \hat C_\beta |h|^{\nu}\quad \forall \,\, x\in\R,\, t\in\R,\, h>0,\, n\ge 1.
\end{equation}

Finally, by  \eqref{unif-bdd-U-seq}, \eqref{b-eq2-1},   \eqref{b-eq1-1}, Arzela-Ascoli Theorem,  and \cite[Chapter 3 - Theorem 15]{Frie},  there is a function $U^{*}(t,x)\in C^{1,2}(\R\times\R)$ and a subsequence $\{u_{n_{k}}\}$ of $\{u_{n}\}$ such that $U^{*}(t,x;u_{n_{k}})$ converges locally uniformly to $U^{*}(t,x)$ in $C^{1,2}(\R\times\R)$.

\noindent{\bf Step 2.} We prove that the mapping $U^*: \mathcal{E}\ni u \mapsto U^*(\cdot, \cdot;u)\in\mathcal{E}$ is continuous.

Indeed, let $\{u_{n}\}_{n\ge 1}$ and $u$ be elements of $\mathcal{E}$ such that $u_{n}\to u$ as $n\to\infty$
uniformly on every compact subset of $\R\times\R$. Suppose by contradiction that $U^*(t, x; u_{n})$ does not converge to $U^*(t, x; u)$ as $n\to\infty$ on every compact subset of $\R\times\R$. Thus, there is $\delta>0$ and a subsequence $\{u_{n1}\}_{n\geq 1}$ of $\{u_{n}\}_{n\geq 1}$ such that
\begin{equation}\label{U-star-nc}
\sup_{(t, x)\in [-K, K]^{2}}|U^*(t, x; u_{n1})-U^*(t, x; u)|\geq \delta, \quad \forall\, n\geq 1,\,\  K>0.
\end{equation}

Since $\mathcal{E}$ is compact, there is a subsequence
$\{u_{n2}\}_{n\geq 1}$ of $\{u_{n1}\}_{n\geq 1}$ and a function $U^*(\cdot,\cdot)\in \mathcal{E}$ such that $U^*(t, x; u_{n_{2}})\to U^*(t,x)$ as $n\to\infty$ on every compact subset of $\R\times\R$.
Observe from Lemma \ref{new-lm1} that
\begin{equation}\label{V-}
\Psi(t, x;u_{n_{2}})=\frac{\mu}{2\sqrt{\lambda}}\int_{\R}e^{-\sqrt{\lambda}|z|}u_{n_{2}}(t, x-z)dz
\end{equation}
Hence,
\begin{align}\label{V-1}
|\Psi(t, x;u_{n_{2}})-\Psi(t, x;u)|&=|\frac{\mu}{2\sqrt{\lambda}}\int_{\R}e^{-\sqrt{\lambda}|z|}(u_{n_{2}}(t, x-z)-u(t,x-z))dz|\cr
&\leq \frac{\mu}{2\sqrt{\lambda}}\int_{-R}^{R} e^{-\sqrt{\lambda}|z|} |u_{n_{2}}(t, x-z)-u(t,x-z)|dz\cr
&\quad +\frac{\mu}{2\sqrt{\lambda}}\int_{\R \backslash[-R,R]} e^{-\sqrt{\lambda}|z|} |u_{n_{2}}(t, x-z)-u(t,x-z)|dz
\end{align}
On one hand, since $u_{n_{2}}\to u$ as $n\to\infty$ uniformly on every compact subset of $\R\times\R$, then
\begin{align}\label{V-2}
\lim_{n\to\infty}\sup_{(t,x)\in[-K,K]^2}\frac{\mu}{2\sqrt{\lambda}}\int_{-R}^{R} e^{-\sqrt{\lambda}|z|} |u_{n_{2}}(t, x-z)-u(t,x-z)|dz=0 \quad \forall\,\ K>0, \,\ R>0.
\end{align}
On the other hand, note that $u\leq \frac{\bar r}{b-\chi\mu}$ for any $u\in \mathcal{E}$,
\begin{align}\label{V-3}
\lim_{R\to\infty}\sup_{n\ge 1, t\in\R, x\in\R}\frac{\mu}{2\sqrt{\lambda}}\int_{\R \backslash[-R,R]} e^{-\sqrt{\lambda}|z|} |u_{n_{2}}(t, x-z)-u(t,x-z)|dz=0.
\end{align}
Therefore, it follows from \eqref{V-}--\eqref{V-3} that $\Psi(t, x;u_{n_{2}})\to\Psi(t, x;u)$ as $n\to\infty$ uniformly on every compact subset of $\R\times\R$.

By \eqref{space-derivative-of-psi-1} and  the similar arguments to the above, it follows that $\p_{x}\Psi(t, x;u_{n_{2}})\to\p_{x}\Psi(t, x;u)$ as $n\to\infty$ uniformly on every compact subset of $\R\times\R$. Therefore,  \cite[Chapter 3, Theorem 15]{Frie} implies that $U^{*}(t, x)\in C^{1,2}(\R\times\R)$ and satisfies
$$
\p_{t}U=\p_{xx}U+(c-\chi\p_{x}\Psi(\cdot,\cdot;u))\p_{x}U+(r(t, x)-\chi\lambda\Psi(\cdot,\cdot;u)-(b-\chi\mu)U)U, \,\ \forall\, (t, x)\in\R\times\R.
$$
Therefore, it follows from Lemma \ref{u-star-star-lm2}  that $U^{*}(t, x)=U^{*}(t, x; u)$ for every $(t,x)\in\R\times\R$. But, letting $n\to\infty$ in \eqref{U-star-nc}, we have 
$$
\sup_{(t, x)\in [-K, K]^{2}}|U^*(t, x)-U^*(t, x; u)|\geq \delta, \quad \forall\,\, K>0,
$$
a contradiction. This shows that the mapping $U^{*}$ is continuous. 

By the Schauder fixed point theorem, there is $u^*\in \mathcal{E}$ such that $U^{*}(t,x; u^*)=u^*(t,x)$ for any $t\in\R$, $x\in\R$. 
Let $U(t,x)=u^*(t,x)$ and $V(t,x)=\Psi(t,x; U)$. Then $$(u(t,x), v(t,x))=(U(t, x-ct), V(t, x-ct))$$ is a time $\omega-$periodic entire solution of  \eqref{Keller-Segel-eq0-1}. Moreover, since $\lim_{x\to-\infty}U^{-}(t,x)=\lim_{x\to-\infty}U^{+}(x)=0$ uniformly in $t\in\R$, it yields that 
\begin{equation}\label{U-limit}
\lim_{x\to-\infty}U(t,x)=0  \quad {\rm uniformly}\,\, {\rm in}\,\, t\in\R.
\end{equation}

In the following we show that
\begin{equation}\label{kk-0}
\lim_{x\to\infty}|U(t, x)-p(t)|=0\quad {\rm uniformly}\,\, {\rm in}\,\ t\in\R.
\end{equation}
Suppose on the contrary that this is false. Then, there is a constant $\delta>0$ and a sequence $\{x_n\}$,
$x_{n}\to\infty$, a sequence $\{t_{n}\}$, $t_{n}\in\R$, such that
\begin{equation}\label{kkk-1}
|U(t_{n}, x_{n})-p(t_{n})|\geq \delta, \quad \forall\,\, n\geq 1.
\end{equation}
%Let $[\frac{t_n}{\omega}]$ be the greatest integer less than or equal to $\frac{t_n}{\omega}$. Then $t_{n}-[\frac{t_n}{\omega}]\omega\in [0, \omega)$. 
Since both $U(t,x)$ and $p(t)$ are periodic in $t$, 
without loss of generality, we may assume that $t_{n}\in [0, \omega]$ and there is $t^*\in [0, \omega]$ such that $t_{n}\to t^*$ as $n\to\infty$.
% $\lim_{n\to\infty}t_{n}-[\frac{t_n}{\omega}]\omega=t^*$. 
Consider the sequence of functions
$$
u_{n}(t,x)=u(t+t_{n},x+x_n+ct_{n})\quad \text{and}\quad v_{n}(t,x)=v(t+t_{n},x+x_n+ct_{n}).
$$
By a priori estimate for parabolic equations, without loss of generality, we may assume that there is $(u^{**}(t,x),v^{**}(t,x))\in C^{1,2}(\R\times\R)$ such that $(u_{n},v_{n})(t,x)\to$  $(u^{**}(t,x), v^{**}(t,x))$ {locally uniformly in $C^{1,2}(\R\times\R)$} as $n\to\infty$. Furthermore, the function $(u^{**}(t,x),v^{**}(t,x))$ is an entire solution of the following equation
\begin{equation}\label{final-eq}
\begin{cases}
u_t=u_{xx}-\chi(uv_x)_x +u(r(t+t^{*}, \infty)-bu),\quad t\in\R, \,\ x\in\R\cr
0=v_{xx}- \lambda v+\mu u,\quad t\in\R, \,\ x\in\R.
\end{cases}
\end{equation}
Note that
$$
u^{**}(t,x)=\lim_{n\to\infty}u_{n}(t,x)\geq \lim_{n\to\infty}U^{-}(t+t_{n}, x+x_n-ct)=p(t+t^{*})>0,\quad \forall\ x\in\R, \ t\in\R.
$$
So $\inf_{(t,x)\in\R\times\R}u^{**}(t,x)>0$. By the comparison principle, it follows  that $\inf_{(t,x)\in\R\times\R}v^{**}(t,x)>0$. Hence, $(u^{**}, v^{**})$ is a strictly positive entire solution of \eqref{final-eq}. By the uniqueness of strictly positive entire solutions of \eqref{final-eq} (\cite[Theorem 1.5]{SaSh4}), we obtain $u^{**}(t, x)=p(t+t^*)$ for any $t\in\R$, $x\in\R$. In particular, $p(t^*)=u^{**}(0,0)$. But \eqref{kkk-1} implies that $$|u^{**}(0,0)-p(t^*)|\geq \delta>0,$$ which is a contradiction.

Next, we prove
\begin{equation}\label{V-neg-limit}
\lim_{x\to-\infty}V(t,x)=0 \quad {\rm uniformly} \,\ {\rm in}\,\ t\in\R.
\end{equation}

First, by Lemma \ref{new-lm1}, we have 
\begin{equation}\label{psi-definition-eq3}
V(t, x)=\frac{\mu}{2\sqrt{\lambda}}\int_{\R}e^{-\sqrt{\lambda}|z|}U(t, x-z)dz.
\end{equation}
For any $\varepsilon>0$, since
\begin{equation}\label{finite-integral}
\frac{\sqrt{\lambda}}{2} \int_{\R} e^{-\sqrt{\lambda}|z|}dz=1
\end{equation}
and 
\begin{equation}\label{U-bddness}
\sup_{t\in\R, x\in\R}|U(t, x)|\leq \frac{\bar r}{b-\chi\mu},
\end{equation}
there is $R>0$ large enough such that
\begin{equation}\label{V-eq-part1}
\frac{\mu}{2\sqrt{\lambda}}\int_{\R  \backslash[-R,R]} e^{-\sqrt{\lambda}|z|}|U(t, x-z)|dz<\frac{\varepsilon}{2}, \quad \forall\,\, t\in\R, \,\ x\in\R.
\end{equation}
According to \eqref{U-limit}, there is $y_0<0$, such that $U(t, y)<\frac{\lambda}{2\mu}\varepsilon$ for any $y\leq y_0$, any $t\in\R$. Note that 
$x-z\leq x+R\leq y_0$ for any $z\in [-R, R]$ and $x\leq y_0-R$. Hence,
$$
\sup_{t\in\R, z\in[-R, R]} U(t, x-z)<\frac{\lambda}{2\mu}\varepsilon, \quad \forall\,\ x\leq y_0-R.
$$
This implies that
\begin{equation}\label{V-eq-part2}
\frac{\mu}{2\sqrt{\lambda}}\int_{-R}^{R}e^{-\sqrt{\lambda}|z|}U(t, x-z)dz<\frac{\varepsilon}{2} \quad \forall\,\ t\in\R,\,\ x\leq y_0-R.
\end{equation}
Thus, \eqref{V-neg-limit} follows from \eqref{psi-definition-eq3}, \eqref{V-eq-part1} and \eqref{V-eq-part2}.

Finally, we prove
\begin{equation}\label{V-posi-limit}
\lim_{x\to\infty}|V(t,x)-\frac{\mu}{\lambda} p(t)|=0 \quad {\rm uniformly} \,\ {\rm in}\,\ t\in\R.
\end{equation}
In view of  \eqref{finite-integral}, \eqref{U-bddness} and the boundedness of $p(t)$, it follows that  for any $\varepsilon>0$,
there is $R>0$ large enough such that
\begin{equation}\label{V-eq-part1-1}
\frac{\mu}{2\sqrt{\lambda}}\int_{\R  \backslash[-R,R]} e^{-\sqrt{\lambda}|z|}|U(t, x-z)-p(t)|dz<\frac{\varepsilon}{2}, \quad \forall\,\, t\in\R, \,\ x\in\R.
\end{equation}
Observe that
\begin{align}\label{V-eq-part3}
|V(t,x)-\frac{\mu}{\lambda} p(t)|
&\leq \frac{\mu}{2\sqrt{\lambda}}\int_{\R}e^{-\sqrt{\lambda}|z|}|U(t, x-z)-p(t)|dz\cr
&\leq \frac{\mu}{2\sqrt{\lambda}}\int_{-R}^{R}e^{-\sqrt{\lambda}|z|}|U(t, x-z)-p(t)|dz\cr
&+\frac{\mu}{2\sqrt{\lambda}}\int_{\R  \backslash[-R,R]} e^{-\sqrt{\lambda}|z|}|U(t, x-z)-p(t)|dz.
\end{align}
From \eqref{kk-0}, there is $x_0>0$ such that
$$
|U(t, x)-p(t)|\leq \frac{\lambda}{2\mu}\varepsilon \quad \forall\,\, x\geq x_0,\,\ t\in\R.
$$
Note that 
$x-z\geq x-R\geq x_0$ for any $z\in [-R, R]$ and $x\geq x_0+R$. Hence,
\begin{equation}\label{sup-U}
\sup_{t\in\R, z\in[-R, R]} |U(t, x-z)-p(t)|\leq \frac{\lambda}{2\mu}\varepsilon, \quad \forall\,\ x\geq x_0+R.
\end{equation}
Combining \eqref{finite-integral}, \eqref{V-eq-part1-1}, \eqref{V-eq-part3} and \eqref{sup-U}, we then obtain 
$$
\sup_{t\in\R}|V(t,x)-\frac{\mu}{\lambda} p(t)|\leq \varepsilon
$$
whenever $x\geq x_0+R$. This completes the proof of \eqref{V-posi-limit}.

\end{proof}

\begin{proof}[Proof of Theorem \ref{nonexistence-forced-wave}]
Assume, by contradiction, that \eqref{Keller-Segel-eq0} has a time $\omega$-periodic forced wave solution $(u(t,x),v(t,x))=(U(t, x-ct),V(t, x-ct))$
connecting $(0, 0)$ and $(p(t),\frac{\mu}{\lambda}p(t))$ with $c<-c^{*}$. Note that
$$
0<U(0, x)\leq U^{+}(x), \quad \forall\,\, x\in\R,
$$
where $U^{+}(x)$ is given in \eqref{U-Def}. Let $\kappa$ be given in Lemma \ref{lem-001-1}. Since $c<-c^{*}$, we have $\kappa<\theta$. Therefore, we can choose $M\gg \frac{\bar r}{b-\chi\mu}$ such that
$$
U^{+}(x)<\min\{M, Me^{\kappa x}\} \quad \forall\,\, x\in\R.
$$
It then follows that  
$$
0<U(0, x)<\min\{M, Me^{\kappa x}\} \quad \forall\,\, x\in\R.
$$
By the arguments similar to those for \eqref{u-left}, it follows  that
 for any $\varepsilon>0$,
\begin{equation*}
\lim_{t\to\infty}\sup_{x\le (-c^*-\varepsilon)t} u(t,x)=0,
\end{equation*}
and hence, 
$$
0<U(t, (-c^*-\varepsilon-c)t)\leq \sup_{x\le (-c^*-\varepsilon)t}U(t, x-ct)=\sup_{x\le (-c^*-\varepsilon)t} u(t,x), \quad \forall\,\, t>0.
$$
Letting $t\to\infty$, we have 
$$
\lim_{t\to\infty}U(t, (-c^*-\varepsilon-c)t)=0
$$
Since $\lim_{x\to\infty}|U(t, x)-p(t)|=0$ uniformly in $t\in\R$, we must have $c>-c^{*}-\varepsilon$. Letting $\varepsilon\to 0$, we obtain  $c\geq -c^{*}$, a contradiction. 
\end{proof}

%\section*{Statements and Declarations}
%\begin{itemize}
%\item {{\bf Data Availability}:} Data sharing not applicable to this article as no datasets were generated or analysed during the current study.
%\item {\bf Competing Interests:} Authors are required to disclose financial or non-financial interests that are directly or indirectly related to the work submitted for publication.
%\end{itemize}

%\noindent{\bf Data Availability:} Data sharing is not applicable to this article as no datasets were generated or analyzed during the current study.

%\medskip

%\noindent{\bf Competing Interests:} The authors have no relevant financial or non-financial interests to disclose.

%\medskip

%\noindent{\bf Author Contribution:} Shuwen Xue wrote the main manuscript text. Xiao-Qiang Zhao revised the manuscript. All authors read and approved the final manuscript.

%\medskip

%\noindent{\bf Funding Declaration:} Xiao-Qiang Zhao's research is partially supported by the NSERC of Canada (RGPIN-2019-05648 and RGPIN-2025-04963). 

\end{document}